\address[costanti@math.unistra.fr]{Francesco Costantino, Institut de Recherche Math\'ematique Avanc\'ee (IRMA)(CNRS),
Universit\'e de Strasbourg I,
67084 Strasbourg, FRANCE }
\address[murakami@waseda.jp]{Jun Murakami, Department of Mathematics, 
Faculty of Science and Engineering,
Waseda University,
3-4-1 Ohkubo, Shinjuku-ku, 
Tokyo 169-8555, JAPAN}
\numberwithin{equation}{section}
\newtheorem{teo}{Theorem}[section]
\newtheorem{lemma}[teo]{Lemma}
\newtheorem{prop}[teo]{Proposition}
\newtheorem{question}[teo]{Question}
\theoremstyle{definition}
\newtheorem{defi}[teo]{Definition}
\newtheorem{rem}[teo]{Remark}
\theoremstyle{remark}
\newtheorem{prof}[teo]{Proof of}
\newcommand{\qbin}[2]{\left[\begin{array}{c}
#1 \\
#2 \end{array}\right]}
\def\mc{\mathbb{C}}
\def\mz{\mathbb{Z}}
\newcommand{\bZ}{{\mathbf Z}}
\newcommand{\op}{\operatorname{op}}
\newcommand{\col}{\operatorname{col}}
\newcommand{\id}{\operatorname{id}}
\newcommand{\ctg}{\operatorname{ctg}}
\title[$SL(2, {\mathbb C})$ quantum $6j$-symbols and their hyperbolic volume]{On $SL(2, {\mathbb C})$ quantum $6j$-symbol and its relation to the hyperbolic volume}
\author[F. Costantino, J. Murakami]{Francesco Costantino and Jun Murakami}
\thanks{The first author was supported by the French ANR Research Project ANR-08-JCJC-0114-01.
The second author was partially supported by Grant-in-Aid for Scientific Research(C) 19540230. }
\begin{document}

\maketitle

\par
\begin{abstract}
We generalize the colored Alexander invariant of knots to an invariant of graphs, and we construct a face model for this invariant by using the corresponding $6j$-symbols, which come from the non-integral representations of the quantum group ${\mathcal U}_q(sl_2)$.  
We call it the $SL(2, \mathbb C)$ quantum $6j$-symbols, and show their relation to the hyperbolic volume of a truncated tetrahedron.   
\end{abstract}

\begin{bf}
Mathematics Subject Classification (2000). 
\end{bf}
46L37; 46L54, 82B99.  

\section*{Introduction}
The $6j$-symbols were first introduced by Racah for studying atomic spectroscopy and later used by Ponzano and Regge as well as Biedenharn and Louck (and many others) in the study of the theory of gravity by using representation theory of the Lie algebra $sl_2$. Their quantized version first appeared in \cite{KR}, where the face model of the colored Jones invariants of knots and links was constructed using quantum $6j$-symbols instead of quantum $R$-matrices.   
The quantum $6j$-symbols were also used to construct the Turaev-Viro invariant of three manifods (\cite{TV}), which turned out to be the square of the norm of the Witten-Reshetikhin-Turaev invariant (\cite{RT}).  
More recently, R. Kashaev constructed knot invariants from quantized dilogarithm functions and observed (\cite{Kas}) that certain limit of his invariants coincide with the hyperbolic volume of the knot complement; later it turned out (\cite{MM}) that the Kashaev invariant is  the colored Jones invariant of spin $\frac{n-1}{2}$ at $q =\xi_n$, where $\xi_n$ is the primitive $2n$-th root of unity $\exp(\frac{\pi \sqrt{-1}}{n})$. In other words, the Kashaev invariant comes from  the $n$ dimensional irreducible representation of ${\mathcal U}_{\xi_n}(sl_2)$. 
\par
When $q = \xi_n$, there exist other invariants related to  ${\mathcal U}_{\xi_n}(sl_2)$, such as the colored Alexander invariant (\cite{ADO}, \cite{M}, \cite{GR}), the logarithmic invariant (\cite{MN}), and the Hennings invariant (\cite{H}).   
The colored Alexander invariant is related to the central deformation of the $n$-dimensional irreducible representation of ${\mathcal U}_{\xi_n}(sl_2)$,   
which is a non-integral highest weight representation.  
Let $\widetilde{\mathcal U}_{\xi_n}(sl_2)$ be the small (or restricted) quantum group which is a quotient of ${\mathcal U}_{\xi_n}(sl_2)$.
Then the logarithmic invariant is defined by using  the radical part of a non-semisimple representation of $\widetilde{\mathcal U}_{\xi_n}(sl_2)$.
The Hennings invariant is an invariant of 3-manifolds coming from the right integral given by the finite dimensional Hopf algebra structure of $\widetilde{\mathcal U}_{\xi_n}(sl_2)$.  
The logarithmic and Hennings invariants are both related to the logarithmic conformal field theory (\cite{FGST}), and can be expressed in terms of the colored Alexander invariant (\cite{MN}).  
\par
The main purpose  of this paper is to investigate the quantum $6j$-symbols related to the non-integral highest weight representations of ${\mathcal U}_{\xi_n}(sl_2)$, and show their relations to the hyperbolic volume of a truncated tetrahedron.  
In Section 1 after recalling the basic facts about the category of non-integral highest weight representations, we define the Clebsch-Gordan quantum coefficients (CGQC) of the tensor product of two such modules, and then combine them to get the corresponding quantum $6j$-symbols. For $n$ odd, these $6j$-symbols were already given and used to construct a 3-manifold invariants in \cite{GP}, \cite{GPT}. 
Since the spin (parametrizing the irreducible representations of ${\mathcal U}_{\xi_n}(sl_2)$) is a continuous parameter in the above construction, we will call the $6j$-symbols computed through this representation theory the $SL(2, \mathbb C)$-symbols, while we will call the usual quantum $6j$-symbol (for instance those introduced in \cite{KR}) the $SU(2, \mathbb C)$ quantum $6j$-symbols.  For the sake of clarity, we moved all the proofs of algebraic statements in the Appendix.

The second section has the goal of relating the $6j$-symbols to hyperbolic geometry. Many evidences support the idea that geometry should show up while considering asymptotical limits of quantum invariants.
For instance the analysis of such limits for $SU(2, \mathbb C)$ $6j$-symbols, led to the (previously unknown) formulas for the volume of a hyperbolic tetrahedron (\cite{MY}, \cite{U}, \cite{MU}). Similarly various proofs have been provided of special cases of Kashaev's volume conjecture (see for instance \cite{C1}, \cite{C2}, \cite{Kas}, \cite{KT}, \cite{MM}, \cite{Vv}) showing that the hyperbolic volumes of link complements are indeed related to such asymptotical limits, and a geometric explanation for this phenomenon (even though not yet a proof) was provided by Yokota (\cite{Y}).
It was observed in \cite{M}, \cite{CM} that the colored Alexander invariant is related to the hyperbolic volume of a cone manifold whose core is the given knot; this is analogous to the generalized volume conjecture of the Kashaev invariant with deformed $q$ in \cite{G}, \cite{MhY}.    
Here, we show that the $SL(2, \mathbb C)$ quantum $6j$-symbol is related to the volume of a truncated tetrahedron:
\begin{teo}[Theorem \ref{th:trunc} below]
Let $T$ be a truncated hyperbolic tetrahedron with oriented labeled edges, and  
let $0 < \theta_a$, $\theta_b $, $\theta_c$, $\theta_d$, $\theta_e$, $\theta_f < \pi$ be the internal dihedral angles at the edges.  
Let $a_n,b_n,c_n,d_n,e_n,f_n$ be sequences of integers such that $\lim_{n\to \infty} \frac{2\pi a_n}{n}=\pi-\theta_a,\ldots, \lim_{n\to \infty} \frac{2\pi f_n}{n}=\pi-\theta_f,$.
Put 
 $\overline{a_n} = n-1-a_n$, $\cdots$, 
$\overline{f_n} = n-1-f_n$.  
Using these parameters, the volume of \,$T$ is given as follows.  
$$
\operatorname{Vol}(T) = 
\lim_{n\to\infty}
\,
\dfrac{\pi}{2\, n}\,
 \log 
 \left(
\left\{
\begin{matrix}
a_n & b_n & e_n \\
d_n & c_n & f_n
\end{matrix}
\right\}_{tet}\,\left\{
\begin{matrix}
\overline a_n & \overline b_n & \overline e_n \\
\overline d_n & \overline c_n & \overline f_n
\end{matrix}
\right\}_{tet}
\right).  
$$
\end{teo}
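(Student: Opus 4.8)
\medskip
\noindent\textbf{Proof proposal.}
The plan is to reduce the statement to a saddle-point analysis of an explicit finite sum and then to match the resulting variational problem with the geometry of a truncated tetrahedron. \emph{Step 1 (an explicit formula).} Using the Clebsch--Gordan quantum coefficients of Section~1, I would first express $\left\{\begin{smallmatrix}a_n&b_n&e_n\\d_n&c_n&f_n\end{smallmatrix}\right\}_{tet}$ as a Racah-type alternating sum over a single index $z$ of quotients of the quantum factorials $\{m\}!=\prod_{j=1}^{m}[j]$, multiplied by the four normalisation factors attached to the admissible triples of the tetrahedron, and similarly for the barred symbol. Using $[n-1-k]=[k+1]$ and the analogous identities for $\{n-1-k\}!$, the product of the two symbols rewrites as a single double sum $\sum_{z,\bar z}(-1)^{z+\bar z}\,c_n(z,\bar z)$ with $c_n>0$ an explicit ratio of sines times a bounded phase $q^{(\,\cdot\,)}$; the point of forming the product is that passing from the unbarred to the barred parameters replaces each $e^{\sqrt{-1}(\pi-\theta_\bullet)}$ by its complex conjugate, so the barred symbol is asymptotically the complex conjugate of the unbarred one (up to a bounded factor), whence the product behaves like a modulus squared --- this is what makes the answer real and accounts for the $\tfrac12$ in the normalisation.

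\emph{Step 2 (asymptotics of the summand).} Since $q=\xi_n$, $[j]=\sin(\pi j/n)/\sin(\pi/n)$, and a Riemann-sum estimate gives, for $m_n\sim\mu n$ with $0<\mu<1$,
\[
\log\{m_n\}!\;=\;-\tfrac{n}{\pi}\,\Lambda\!\bigl(\tfrac{\pi m_n}{n}\bigr)\;-\;m_n\log 2\;-\;m_n\log\sin\tfrac{\pi}{n}\;+\;O(\log n),
\]
$\Lambda$ being the Lobachevsky function. Substituting this into $\log\bigl((-1)^{z+\bar z}c_n(z,\bar z)\bigr)$ with $z=\zeta n$, $\bar z=\bar\zeta n$, I would check that the terms $-m\log 2-m\log\sin(\pi/n)$ cancel because the Racah summand is balanced, that the alternating sign contributes a purely imaginary linear term in $\zeta,\bar\zeta$, and that what remains is $\tfrac{n}{\pi}\Phi(\zeta,\bar\zeta)$ with $\Phi$ an explicit combination of $\Lambda$'s (equivalently of Rogers dilogarithms) of linear forms in $\zeta,\bar\zeta,\pi-\theta_a,\dots,\pi-\theta_f$, splitting as $\Phi(\zeta,\bar\zeta)=\phi(\zeta)+\overline{\phi}(\bar\zeta)$; in particular $\phi$ is complex-valued.

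\emph{Step 3 (the saddle point).} One then has to prove
\[
\tfrac{\pi}{2n}\,\log\Bigl(\textstyle\sum_{z,\bar z}(-1)^{z+\bar z}c_n(z,\bar z)\Bigr)\ \longrightarrow\ \tfrac12\,\Phi(\zeta_0,\bar\zeta_0)\;=\;\operatorname{Re}\phi(\zeta_0)
\]
for the appropriate critical point ($\bar\zeta_0=\overline{\zeta_0}$). This is the step I expect to be the main obstacle: because of the alternating sign the sum is not bounded below by its largest term, so one must either verify that no cancellation occurs near the dominant real saddle or deform the summation into the complex plane along a steepest-descent path through the dominant complex saddle, and then control the error and the polynomially many subleading contributions uniformly in $n$. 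It may be cleanest to first rewrite the sum as a contour integral of a Faddeev-type quantum dilogarithm; one would also treat a non-degenerate $T$ first and recover the general case by continuity of both sides in $(\theta_a,\dots,\theta_f)$.

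\emph{Step 4 (geometric identification).} It remains to show $\operatorname{Re}\phi(\zeta_0)=\operatorname{Vol}(T)$. Exponentiating the saddle equation $\phi'(\zeta_0)=0$ gives a polynomial relation among $e^{\sqrt{-1}\pi\zeta_0}$ and the $e^{\sqrt{-1}(\pi-\theta_\bullet)}$, which I would recognise as the compatibility condition for the Gram matrix of a hyperbolic truncated tetrahedron with dihedral angles $\theta_a,\dots,\theta_f$ --- so that $\zeta_0$ records the edge lengths of $T$ (here the hypotheses $0<\theta_\bullet<\pi$ and the existence of $T$ are used). To pin down the value, set $G(\theta)=\operatorname{Re}\phi(\zeta_0)$ and differentiate in the angles: since $\phi$ is stationary at $\zeta_0$, $dG$ only picks up the explicit $\theta$-dependence, and using $\Lambda'(x)=-\log|2\sin x|$ one finds $dG=-\tfrac12\sum_\bullet\ell_\bullet\,d\theta_\bullet$, i.e.\ $G$ satisfies the Schl\"afli differential relation obeyed by $\operatorname{Vol}(T)$. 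Matching $G$ with $\operatorname{Vol}(T)$ at one explicitly computable configuration (a symmetric truncated tetrahedron, or a degenerate limit where both tend to $0$) then forces $G\equiv\operatorname{Vol}(T)$; alternatively one recognises $\operatorname{Re}\phi(\zeta_0)$ directly as a Murakami--Yano/Ushijima-type dilogarithmic volume formula for a truncated tetrahedron.
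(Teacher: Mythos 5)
Your overall skeleton (rewrite each symbol as a one-index sum, extract the exponential growth rate, verify the Schl\"afli relation, fix the constant by a degeneration) parallels the paper, and your Step 4 matches its endgame: the critical-point equation exponentiates to a quadratic in $e^{\sqrt{-1}\zeta}$ whose discriminant is $\det G$ for the Gram matrix \eqref{eq:gram}, the $\theta_a$-derivative is identified with $-l_a$ via Ushijima's cofactor formula \eqref{eq:length}, and the constant is fixed by the ideal case (Theorem \ref{th:ideal}). But there is a genuine gap exactly where you yourself flag ``the main obstacle'': you treat the sums as oscillating alternating sums and leave unresolved how to exclude cancellation near the dominant saddle, offering only the possibility of a steepest-descent or contour-integral argument that is not carried out. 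The paper never faces this problem, because of Lemma \ref{lemma:trunc0}: when the four triples are admissible (which holds here since $\theta_i+\theta_j+\theta_k<\pi$ at each vertex of the truncated tetrahedron), the sign $(-1)^z$ is absorbed by the identity $(-1)^z\qbin{A_{afc}+1}{2c+z+1}=(-1)^{B_{afc}}\qbin{A_{afc}+1-n}{2c+z+1-n}$, so every summand $R(z)$ is \emph{positive}; moreover the ratio $r(z)=R(z)/R(z-1)$ is shown, by elementary cotangent inequalities, to be strictly decreasing, so the sum is unimodal and is trapped between its largest term and $n$ times that term. This positivity/unimodality is the key idea of the whole proof and is precisely what your proposal lacks; without it (or a genuinely executed saddle-point analysis with uniform error control), your Step 3 is not a proof.

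A second, related misconception is your Step 1 rationale for taking the product: the barred symbol is not ``asymptotically the complex conjugate'' of the unbarred one, and the product is not a modulus squared. Both symbols are already real (each is $(-1)^{n-1}$ times a sum of positive terms), but they have \emph{different} exponential growth rates: $\frac{\pi}{n}\log$ of the first tends to $g(\theta_a,\dots,\theta_f,\zeta^{(1)})$ and of the second to $-g(\theta_a,\dots,\theta_f,-\zeta^{(2)})$, where $\zeta^{(1)}$ and $-\zeta^{(2)}$ are two \emph{distinct real} solutions of $\partial g/\partial\zeta=0$ lying in different ranges; neither limit alone equals $\operatorname{Vol}(T)$ (a single symbol is not invariant under the tetrahedral symmetries, as the Remark following the theorem notes), and it is only the combination $g(\zeta^{(1)})-g(-\zeta^{(2)})$, i.e.\ the product of the two symbols, that satisfies the Schl\"afli relation for $2\operatorname{Vol}(T)$. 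So the factor $\tfrac12$ comes from combining two real critical points, not from a modulus-squared mechanism, and your single complex saddle $\zeta_0$ with its conjugate would have to be replaced by this pair of real critical values to make Step 4 go through.
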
  
The proof of the theorem exploits the peculiar behavior of $6j$-symbols outlined in Lemma \ref{lemma:trunc0}: they are finite sums positive real numbers each of which is growing exponentially fast. To compute the overall exponential growth of the sum is therefore sufficient to identify the summands with maximal growth rate. This key point is what makes relatively easy to compute the asymptotical behavior in our case: in general one has to deal with oscillating complex valued sums whose behavior is quite complicated. A similar property was used in \cite{C2} for $6j$-symbols associated to the representation theory of $U_q(sl_2)$ for generic $q$ to prove an analogue of the generalized volume conjecture for tetrahedra.  
The rest of the proof shows that the maximal growth rate, expressed in terms of the angles of the tetrahedron, satisfies the same Schl\"afli differential equation as the volume function, thus they differ by a constant and finally that in a special case the two functions are equal.  
\par
In Section 3, interpreting graphically the morphisms between representations in the standard way, we generalize the colored Alexander invariant to an invariant of colored graphs (which we shall denote $<\Gamma,\col>$) which is essentially equal to the invariant given in \cite{GR}.  
After defining the invariant, we construct a face model for it by using the $SL(2, \mathbb C)$ quantum $6j$-symbols along with the method already used in \cite{KR}.  
This model is a generalization of those for the Conway function and the Alexander polynomial constructed by O. Viro \cite{V} using the quantum supergroup $gl(1|1)$.
\par
The following natural evolutions of the preceding results remains open:
\begin{question}
What is the asymptotical behavior (if any) if $\theta_a,\ \ldots, \theta_f$ are complex valued? What is the geometrical meaning of such behavior? 
\end{question}
More in general similar questions may be asked for general trivalent graphs and knots. 
The fact (proved in Remark \ref{rem:kashaev}) that the Kashaev invariant of a knot can be computed as a limit with $\lambda\to\frac{n-1}{2}$ ($\lambda$ being the color of the knot) makes it natural to expect that the $SL(2, \mathbb C)$ quantum $6j$-symbol could be a good tool to investigate the volume conjecture and quantized gravity (the latter being one of the original motivations to study the $6j$-symbols). 
So it is natural to expect that the answer to the following question should be related to the generalized volume conjecture (\cite{G}):
\begin{question}
Given a hyperbolic knot $K\subset S^3$ what is (if any) the asymptotical behavior of $<K,\frac{(n-1)}{2}\lambda>_n$? 
What is its geometrical meaning?
\end{question}  
  
Other direction of study left open by the present work are related to the Turaev-Viro type invariant of $3$-manifolds introduced in \cite{GPT} using $SL(2, \mathbb{C})$ $6j$-symbols. Such invariants are defined for generic values of the parameters, 
and, specializing the parameters to half-integers, it is expected that they should be related to the logarithmic TQFT introduced in \cite{FGST} and to the logarithmic invariant in \cite{MN}. This will be a subject of future study.

\bigskip
\par\noindent
{\bf Acknowledgments.}
The authors thank Nathan Geer, Bertrand Patureau-Mirand and Roland van der Veen for helpful discussions. 
\section{Representations of ${\mathcal U}_{\xi_n}$ and their morphisms}
\setcounter{equation}{0}
In this section, we generalize the colored Alexander invariant to an invariant of oriented colored graphs.  
Such generalization was already given in \cite{GR}, and here we give another construction starting from the Clebsch-Gordan quantum coefficients (CGQC).  
\subsection{Highest weight representations of
${\mathcal U}_{q}(sl_2)$}\label{sub:1}

Let $n\in \mathbb{N}$ and let $\xi_n$ be the primitive $2n$-th root of unity
$\exp(\frac{\pi \sqrt{-1}}{n})$.  
For a complex number $a$, we will denote $\exp(\frac{\pi \sqrt{-1}\,a}{n})$ by $\xi_n^a$.  
We will also use the following notations:  
$$
\begin{aligned}
\{a\}=\xi_n^a-\xi_n^{-a}\quad (a\in \mc),&\qquad 
\{k\}! = \prod_{j=1}^k \{j\} \quad (k \in  \mathbb{N}), \ 
\\
 [a]= \frac{\{a\}}{\{1\}}, \qquad 
 &\{a,a-k\}=\prod_{j=0}^{k-1}\{a-j\}
\end{aligned}
$$
 and, if $a-b\in \{0,1,\ldots ,n-1\}$, let 
$$
\qbin{a}{b}= \prod^{a-b-1}_{j=0} \dfrac{\{a-j\} }{\{a-b-j\}}=\frac{\{a,b\}}{\{a-b\}!}.
$$
We will often use implicitly the following identities: 
\begin{itemize}
\item $\{a\}=\{n-a\},\quad (a\in \mathbb{C})$
\item $\{a\}!\{n-1-a\}!=\{n-1\}!=\sqrt{-1}^{\,n-1}\, n,\quad (a\in \{0,1,\ldots, n-1\})$ 
\item $\qbin{a}{b}=\qbin{n-1-b}{n-1-a}, \quad (a,b\in \mathbb{C}, \   (a-b)\in \{0,1,\ldots, n-1\})$ 
\item $\qbin{a}{b}=(-1)^{a-b}\qbin{a-n}{b-n}\quad (a,b\in \mathbb{C}, \  (a-b)\in \{0,1,\ldots, n-1\})$
\end{itemize}
as well as the following lemma.
\begin{lemma}
For any parameter $a$, $b$ and a non-negative integer $c$, we have
\begin{equation}
\sum_{s=0}^{c}
\xi_n^{\pm (a+b-c+2)s} \, 
{\qbin{a-s}{a-c}} \, \qbin{b+s}{b}
=
\xi_n^{\pm (b+1)c} \,\qbin{a+b+1}{a+b-c+1}.
\label{eq:c}
\end{equation}
\label{lemma:relation}
\end{lemma}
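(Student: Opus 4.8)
The plan is to prove the $q$-binomial identity \eqref{eq:c} by the standard method of generating functions, or equivalently by a finite version of the $q$-binomial theorem, treating the two sign choices simultaneously. First I would rewrite both $q$-binomials in terms of the $\{a,a-k\}$-products: the summand becomes, up to the overall $\xi_n^{\pm(a+b-c+2)s}$ factor, a ratio of products of $\{\cdot\}$-symbols, and after clearing the $\{c\}!$ in the denominator (which does not depend on $s$) the claimed identity is equivalent to a polynomial identity in the variable $x = \xi_n^{\pm 1}$. Concretely, one recognizes $\qbin{a-s}{a-c}$ and $\qbin{b+s}{b}$ as the coefficients appearing in the $q$-expansions of $\prod(1 - x^{2j}\,t)$-type products, so that the whole sum is the coefficient of $t^c$ (or a similar single coefficient) in a product of two such finite products; the left side is then the Cauchy-product (Vandermonde-type convolution) of these two expansions.

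The key steps, in order, are: (i) set $q = \xi_n$ and reduce to proving, for each choice of sign, the identity
\[
\sum_{s=0}^{c} q^{\pm(a+b-c+2)s}\,\qbin{a-s}{a-c}\,\qbin{b+s}{b} = q^{\pm(b+1)c}\,\qbin{a+b+1}{a+b-c+1};
\]
(ii) observe that since both sides are Laurent polynomials in $q^a$ and $q^b$ (for fixed non-negative integer $c$) with the same, finite, set of monomials potentially occurring, it suffices to verify equality for all sufficiently many integer values of $a,b$, or alternatively to prove it as a formal identity in independent indeterminates $u=q^a$, $v=q^b$; (iii) prove the resulting formal identity by the $q$-Vandermonde (Chu--Vandermonde) summation formula, or by a short induction on $c$. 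For the induction on $c$: the case $c=0$ is trivial (both sides equal $1$), and the inductive step follows by splitting the sum according to whether $s$ contributes to the "$a$-part" or the "$b$-part", using the Pascal-type recursions $\qbin{a-s}{a-c}=\qbin{a-s-1}{a-c-1}+ (\text{power of }q)\,\qbin{a-s-1}{a-c}$ that follow from $\{a\}=[\,\cdot\,]$-manipulations, reindexing, and collecting terms. The sign-$\pm$ cases are handled in parallel because replacing $q$ by $q^{-1}$ sends each $q$-binomial to itself (this is immediate from $\{a\}=\xi_n^a-\xi_n^{-a}$ being odd, hence the ratio defining $\qbin{a}{b}$ is invariant), so only the explicit prefactors change sign, consistently on both sides.

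The main obstacle I anticipate is bookkeeping rather than conceptual: one must be careful that the quantities $\qbin{a-s}{a-c}$ and $\qbin{b+s}{b}$ are well-defined for all $s$ in the summation range, i.e.\ that the relevant differences lie in $\{0,1,\ldots,n-1\}$ as required by the definition of $\qbin{\cdot}{\cdot}$; since $a,b$ are generic complex parameters this is a matter of interpreting the products $\{a,a-k\}$ directly (which make sense for any $a$) rather than the bracketed binomial notation literally, and then noting that the identity, once proven for the products, specializes correctly. The other delicate point is matching the exact powers of $\xi_n$ on the two sides after the convolution, which is where the shift $+2$ in the exponent $a+b-c+2$ and the term $b+1$ come from; aligning these requires choosing the reindexing in the Cauchy product so that the "balanced" normalization of the $q$-Vandermonde identity is reproduced. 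I would therefore present the cleanest route as: reduce to a formal identity in $u=\xi_n^a$, $v=\xi_n^b$, cite or re-derive the $q$-Chu--Vandermonde identity in the precise normalization needed, and then read off \eqref{eq:c}; the details, being routine, are relegated to the Appendix as the authors indicate.
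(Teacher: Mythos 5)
Your plan is correct and is essentially the paper's own argument: the paper likewise reduces the statement to a $q$-Chu--Vandermonde-type summation (it simply cites identity (51) of Kirillov for non-negative integer $a,b$ at generic $q$ rather than rederiving it by induction on $c$), extends to arbitrary complex $a,b$ by observing that both sides are Laurent polynomials in $q^a,q^b$, and only then specializes $q$ to $\xi_n$. The one caution concerns the first branch of your step (ii): having already set $q=\xi_n$, integer values of $a,b$ produce only the finitely many points $(\xi_n^a,\xi_n^b)$ on the torus, so that branch needs an explicit degree count (the exponent window is of width at most $2c\le 2(n-1)<2n$) to be legitimate; it is cleaner to either keep $q$ generic and specialize at the very end, as the paper does, or to follow the formal-indeterminate/induction-on-$c$ route you yourself single out as the cleanest.
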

\begin{proof}{
For generic $q$, the relation \eqref{lemma:relation} is true for
the case that $a$ and $b$ are non-negative integers by (51) in \cite{K}.  
Both sides of \eqref{eq:c}
are Laurent polynomials with respect to the variable 
$q^a$, $q^b$, 
and they are
equal for any positive integers $a$ and $b$.   
Therefore, these two polynomials are
equal and the two sides of
\eqref{eq:c} coincide for any
$a$ and $b$ for generic $q$. 
Then we can specialize $q$ to $\xi_n$.   
}\end{proof}

\begin{defi}
For a parameter $q \neq 0, \pm 1$, let
${\mathcal U}_{q}(sl_2)$ be the quantized enveloping algebra of $sl_2$, which is the Hopf algebra generated by $E$, $F$, $K$ and $K^{-1}$ with relations
$$
[E,F]=\frac{K^2-K^{-2}}{{q}-q^{-1}},\  
K\, E={q}\,E\, K,\  
K\, F=q^{-1}\, F\, K,\ 
K \, K^{-1}=K^{-1}\, K=1, 
$$
and the Hopf algebra structure given by 
$$
\Delta(E)=E\otimes K+K^{-1}\otimes E,\quad 
\Delta(F)=F\otimes K+K^{-1}\otimes F,\quad 
\Delta(K^{\pm1})=K^{\pm1}\otimes K^{\pm1},
$$ 
$$
S(E)=-{q}\, E,\quad S(F)=-q^{-1}\, F,\quad S(K)=K^{-1},
$$ 
$$
\epsilon(E)=\epsilon(F)=0,\quad 
\epsilon(K)=1.
$$
\end{defi}
From now on, we will stick to the case $q=\xi_n$ unless explicitly stated the contrary.
The proof of the following is a straightforward verification of the above relations:
\begin{lemma}
For each $a\in \mc\setminus \frac{1}{2}\mz$,  there is a simple representation $V^a$ of $\,{\mathcal  U}_{\xi_n}(sl_2)$ of dimension $n$ whose basis is $\{e^a_0,\ e_1^a, \ \cdots, \ e_{n-1}^a\}$ and on which the actions of $E$, $F$ and $K$ are given by
$$
E(e^a_j)=[j]\, e^a_{j-1},\quad 
F(e^a_j)=[2a-j]\, e^a_{j+1},\quad 
K(e^a_j)=\xi_n^{a-j}\, e^a_j \quad
(e_{-1}^a = e_{n}^a = 0).  
$$
The basis $\{e_0^a$, $e_1^a$, $\cdots$, $e_{n-1}^a\}$ of $V^a$ will be called the {\it weight basis} of $V^a$.  
Two such representations $V^a$ and $V^b$ are isomorphic iff $a-b\in 2n\mz$.
The representation $(V^a)^*$ is isomorphic to $V^{n-1-a}$,
a duality pairing realizing this isomorphism being: 
\begin{equation}
\cap_{a,b}(e^a_i,\, e^b_j)
=
 \delta_{b,n-1-a}\,\delta_{i,n-1-j} \, \xi_n^{-(a-i)(n-1)}.
\label{equation:cap}
\end{equation}
Similarly, an invariant vector in $V^a\otimes V^{b}$ is given by: 
\begin{equation}
\cup_{a,b}
=
\delta_{b,n-1-a}
\sum_{i=0}^{n-1}\xi_n^{(b-n+1+i)(n-1)}\, e^a_i\otimes e^{b}_{n-1-i}.  
\label{equation:cup}
\end{equation}
\end{lemma}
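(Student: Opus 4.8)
\emph{The plan} is to verify every assertion directly on the weight basis $\{e^a_0,\dots,e^a_{n-1}\}$, since each reduces to a short computation with quantum integers. First I would check that the displayed formulas respect the four defining relations of ${\mathcal U}_{\xi_n}(sl_2)$. The relations $KE=\xi_n EK$ and $KF=\xi_n^{-1}FK$ are immediate, because $K$ acts diagonally with eigenvalue $\xi_n^{a-j}$ on $e^a_j$, and $KK^{-1}=1$ is automatic. The only substantive point is $[E,F]=\frac{K^2-K^{-2}}{\xi_n-\xi_n^{-1}}$: applied to $e^a_j$ it becomes the scalar identity
$$
[2a-j]\,[j+1]-[j]\,[2a-j+1]=[2a-2j],
$$
which I would prove by writing $[x]=\{x\}/\{1\}$ and expanding each product into four powers of $\xi_n$; in the difference the terms of exponent $\pm(2a+1)$ cancel and the remainder collapses to $\{2a-2j\}/\{1\}=[2a-2j]$.

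Next I would prove simplicity. Since $a\notin\frac12\mz$ we have $2a\notin\mz$, so $[2a-j]\neq 0$ for every $j$, while $[j]\neq 0$ for $1\le j\le n-1$; thus $E$ and $F$ link consecutive basis vectors with nonzero coefficients. Because the $n$ eigenvalues $\xi_n^{a-j}$ of $K$ are pairwise distinct, every submodule is a sum of weight lines $\mc\,e^a_j$, and the nonvanishing of these coefficients then forces any nonzero submodule to contain all the $e^a_j$, so $V^a$ is simple. For the isomorphism statement I would note that $Ee^a_j=[j]e^a_{j-1}$ vanishes only at $j=0$, so the $E$-kernel of $V^a$ is the single line $\mc\,e^a_0$, on which $K$ acts by $\xi_n^a$. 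Any module isomorphism commutes with $E$ and $K$ and so matches these highest-weight lines, forcing $\xi_n^a=\xi_n^b$, i.e.\ $a-b\in 2n\mz$; conversely, when $a-b\in 2n\mz$ the relation $\xi_n^{2n}=1$ makes all structure constants for $V^a$ and $V^b$ agree, so $e^a_j\mapsto e^b_j$ is an isomorphism.

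Finally I would identify the dual module and the invariant vector by checking the morphism conditions on the generators. That $\cap_{a,b}$ is a map to the trivial module (with $b=n-1-a$) means $\cap_{a,b}\bigl(\Delta(x)(e^a_i\otimes e^b_j)\bigr)=\epsilon(x)\,\cap_{a,b}(e^a_i\otimes e^b_j)$ for $x\in\{E,F,K\}$. The case $x=K$ is immediate, since $(a-i)+(b-j)=0$ on the support $j=n-1-i$. For $x=E$ the coproduct $\Delta(E)=E\otimes K+K^{-1}\otimes E$ produces two contributions sharing a common factor whose phases differ precisely by $\xi_n^{\pm n}=-1$, so they cancel; aligning them uses $\{x\}=\{n-x\}$, hence $[n-i]=[i]$. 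The case $x=F$ is identical. Non-degeneracy of the antidiagonal matrix then yields $(V^a)^*\cong V^{n-1-a}$, and the same cancellation mechanism, applied to $\cup_{a,b}$, shows that it is fixed by $K$ and annihilated by $E$ and $F$.

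The only delicate part is the bookkeeping: tracking the phase factors $\xi_n^{-(a-i)(n-1)}$ carried by $\cap_{a,b}$ and $\cup_{a,b}$ together with the recurring sign $\xi_n^n=-1$, and repeatedly invoking $\{x\}=\{n-x\}$ to identify the quantum integers attached to the $V^b$-factor with those attached to $V^a$. Beyond this, every step is a routine verification.
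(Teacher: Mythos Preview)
Your proposal is correct and is precisely the ``straightforward verification of the above relations'' that the paper invokes in lieu of a proof. The paper gives no further details, so your approach coincides with theirs and simply fills in the computations they omit.
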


In what follows we will represent graphically the maps $\cap_{a,b}$ and $\cup_{a,b}$ as follows:
\begin{equation}
\begin{matrix}
\begin{picture}(40,30)
\thicklines
\put(20, 30){\line(-1,-1){20}}
\put(20, 30){\line(1,-1){20}}
\put(0,10){\vector(0,-1){10}}
\put(40,10){\vector(0,-1){10}}
\put(0, 20){$a$}
\put(35, 20){$n-1-a$}
\end{picture}
\\ 
\qquad i \qquad n-1-i
\end{matrix}\qquad,
\qquad
\begin{matrix}
\qquad i \qquad n-1-i
\\ 
\begin{picture}(40,30)
\thicklines
\put(20, 0){\line(-1,1){20}}
\put(20, 0){\line(1,1){20}}
\put(0,30){\vector(0,-1){10}}
\put(40,30){\vector(0,-1){10}}
\put(0, 5){$a$}
\put(35, 5){$n-1-a$}
\end{picture}
\end{matrix}\qquad.
\end{equation}
\subsection{The $R$-matrix}
The $R$-matrix corresponding to the colored Alexander invariant is given in \cite{ADO}, and is also used in \cite{M}.  
The construction of the representation of ${\mathcal U}_{\xi_n}(sl_2)$ is a little bit different from that in \cite{M}, and 
we define ${}_a^bR : V^a \otimes V^b \to V^b \otimes V^a$ (the morphism obtained by composing the $R$-matrix action with the flip) as follows:   
\begin{multline}
{}_a^bR(e^{a}_{u}\otimes e^{b}_{v})
=
\\
\sum_{m} \{m\}!
\xi_n^{2 (a - u)(b - v) - m (a - b - u + v) -\frac{m(m+1)}{2}}
\qbin{u}{\!\!u-m\!\!\!} \qbin{2b-v}{\!\!2b-v-m\!\!\!} 
e^{b}_{v+m}\otimes e^{a}_{u-m}.
\label{equation:R}
\end{multline}
where $m$ ranges in $[0,{ \min}(n-v-1,u)]\cap \mathbb{N}$.
We denote ${}^{b}_{a}R_{u,v}^{h,k}$ the coefficient of $R(e^{a}_u\otimes e^{b}_v)$ with respect to $e^{b}_h\otimes e^{a}_k$.
\begin{prop}\label{prop:rmatrix}
The morphism 
${}_a^bR$ 
given above is the $R$-matrix of the non-integral representations, in other words, ${}_a^bR$ satisfies 
\begin{equation}
{}_a^bR \, \Delta(x) = \Delta(x) \, {}_a^bR
\label{eq:Rdelta}
\end{equation}
as mappings from $V^a \otimes V^b$ to $V^b \otimes V^a$ 
for any $x \in {\mathcal U}_{\xi_n}(sl_2)$,
and
\begin{equation}
({}_b^cR \otimes \id)\, (\id \otimes {}_a^cR) \, 
({}_a^bR \otimes \id)
=
 (\id \otimes {}_a^bR) \, 
({}_a^cR \otimes \id)\, ( \id\otimes{}_b^cR )
\label{eq:YBE}
\end{equation}
as mappings from 
$V^a \otimes V^b \otimes V^c$ to
$V^c \otimes V^b \otimes V^a$.  
\end{prop}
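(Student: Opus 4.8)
The plan is to verify the two stated identities \eqref{eq:Rdelta} and \eqref{eq:YBE} directly, in the spirit of the remark preceding the proposition that this is "a straightforward verification." The key simplification, which I would exploit throughout, is that all the objects involved — the comultiplication $\Delta$, the action of $E$, $F$, $K$ on the $V^a$, and the explicit formula \eqref{equation:R} for ${}_a^bR$ — are given by expressions that are Laurent polynomial (or polynomial) in the variables $\xi_n^a$, $\xi_n^b$, $\xi_n^c$ together with the $q$-numbers $\{j\}$. So rather than specializing to $q=\xi_n$ and to non-integral highest weights from the start, I would first prove both identities for \emph{generic} $q$ and for \emph{integer} highest weights $2a, 2b, 2c \in \mathbb{N}$, where $V^a$ is the usual finite-dimensional irreducible $U_q(sl_2)$-module. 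In that classical setting both statements are standard: \eqref{eq:Rdelta} says the universal $R$-matrix intertwines $\Delta$ with $\Delta^{\op}$ (after composing with the flip), and \eqref{eq:YBE} is the usual quantum Yang–Baxter equation for $R$-matrices acting on a triple tensor product. One then checks that the matrix coefficients appearing on both sides of each identity, written out in the weight bases, are Laurent polynomials in $\xi_n^{a}, \xi_n^b, \xi_n^c$ (with coefficients that are themselves Laurent polynomials in $q$); since two such Laurent polynomials that agree on a Zariski-dense set — here, all positive-integer values of $2a,2b,2c$ and all $q$ not a root of unity — must be identically equal, the identities persist for all complex $a,b,c$ and can be specialized to $q=\xi_n$. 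This is exactly the argument already used to prove Lemma \ref{lemma:relation}.

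For \eqref{eq:Rdelta}, since $\Delta$ is an algebra map and ${\mathcal U}_{\xi_n}(sl_2)$ is generated by $E$, $F$, $K^{\pm 1}$, it suffices to check the commutation with $\Delta(E)$, $\Delta(F)$ and $\Delta(K^{\pm 1})$. The case of $K$ is immediate from weight considerations: both $\Delta(K)$ and ${}_a^bR$ (which shifts $e^a_u\otimes e^b_v$ to $e^b_{v+m}\otimes e^a_{u-m}$, preserving the total weight $a+b-u-v$) act diagonally with respect to the total $K$-weight, so they commute. For $E$ and $F$ one feeds the explicit formulas into both sides, uses the explicit actions from the Lemma of Section \ref{sub:1}, and reduces the resulting identity of rational expressions in $\xi_n^a,\xi_n^b$ to a $q$-binomial identity; here Lemma \ref{lemma:relation} (or rather its specializations/reindexings) is designed precisely to absorb the sums $\sum_m \{m\}!\,\xi_n^{(\cdots)}\qbin{u}{u-m}\qbin{2b-v}{2b-v-m}$ that arise after applying $E\otimes K + K^{-1}\otimes E$. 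I would organize this as: (i) apply $\Delta(E)$ then ${}_a^bR$; (ii) apply ${}_a^bR$ then $\Delta(E)$; (iii) match coefficients of each $e^b_h\otimes e^a_k$ and verify equality using the pentagon-free $q$-binomial recursions; (iv) do the same for $F$, or alternatively deduce the $F$-case from the $E$-case by a symmetry/duality argument using the pairings \eqref{equation:cap}, \eqref{equation:cup}.

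For \eqref{eq:YBE}, once \eqref{eq:Rdelta} is known for all three modules, the cleanest route is again to reduce to the generic-$q$, integer-weight case where it is the classical hexagon/YBE for the universal $R$-matrix of $U_q(sl_2)$, and then invoke the polynomiality-and-density argument to transport it to $q=\xi_n$ and arbitrary $a,b,c\in\mathbb{C}\setminus\frac12\mathbb{Z}$. The one genuine subtlety — and the step I expect to be the main obstacle — is the bookkeeping of summation ranges and the verification that nothing degenerates: in \eqref{equation:R} the index $m$ runs over $[0,\min(n-v-1,u)]\cap\mathbb{N}$, a truncation that is invisible for generic $q$ but essential at a root of unity, and one must make sure that the matrix coefficients one is comparing really are given by the \emph{same} finite Laurent polynomials as their generic-$q$ counterparts on the relevant range (equivalently, that no term which is nonzero generically becomes spuriously killed, or vice versa, under the specialization). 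This is a finite but slightly delicate check; I would handle it by noting that for $0\le u,v,h,k\le n-1$ the truncated sum coincides with the full sum of the generic formula because the extra terms carry factors $\{j\}$ with $1\le j\le n-1$, which do not vanish at $\xi_n$, together with $q$-binomials $\qbin{u}{u-m}$ that vanish identically as polynomials once $m>u$. Modulo this range analysis, everything else is routine algebra, and the proposition follows.
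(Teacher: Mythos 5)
Your treatment of \eqref{eq:Rdelta} is essentially the paper's: one checks the generators, with $K$ immediate from weight homogeneity of ${}_a^bR$ and $E,F$ by feeding the explicit formulas into both sides (the paper simply records this as ``easy to check''). The genuine gap is in your argument for \eqref{eq:YBE}. The anchor of your Zariski-density transport does not exist: for generic $q$ the formulas defining $V^a$ do \emph{not} give an $n$-dimensional module (the truncation $e^a_n=0$ is compatible with $[E,F]=\frac{K^2-K^{-2}}{q-q^{-1}}$ on $e^a_{n-1}$ only when $q^{2n}=1$), and for integer weights the usual irreducible $U_q(sl_2)$-module has dimension $2a+1\neq n$, with braiding coefficients summed over $0\le m\le \min(u,2b-v)$ rather than over $0\le m\le\min(u,n-1-v)$ as in \eqref{equation:R}. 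So the matrices whose Yang--Baxter relation you need are not entrywise specializations of matrices for which the ``classical'' YBE is known, and there is nothing for the density argument to propagate. Your proposed patch is moreover false as stated: for $n-1-v<m\le u$ the omitted coefficient equals $\xi_n^{(\cdots)}\{u,u-m\}\,\{2b-v,2b-v-m\}/\{m\}!$, which does \emph{not} vanish at $q=\xi_n$ when $b\notin\frac12\mathbb{Z}$; those terms are absent from \eqref{equation:R} only because the target vectors $e^b_{v+m}$ with $v+m\ge n$ have been truncated away ($V^b$ is a quotient of a Verma-type module, the span of $e_n,e_{n+1},\dots$ being a submodule precisely because $[n]=0$ at $\xi_n$), not because their coefficients die. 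Hence the truncated matrix is genuinely different from the untruncated generic one, and the ``same Laurent polynomial'' claim fails.

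The paper closes this gap differently: it does not re-derive \eqref{eq:YBE} but imports it from \cite{ADO}, by checking that, up to the overall scalar $\xi_n^{2ab}$ (which occurs identically on both sides of \eqref{eq:YBE} and so is harmless) and up to gauge parameters $\mu,\nu,f,F,\eta,\kappa$ that manifestly do not affect the braid relation, ${}_a^bR$ coincides with the matrix $G(\xi_n^{-4a},\xi_n^{-4b},+)$ of \cite{ADO}, whose braid relation (3.8) is proved there. If you want to keep the spirit of your argument, the correct anchor is the infinite-dimensional Verma-type module with the untruncated sum over $m$: there a polynomiality argument in the style of Lemma \ref{lemma:relation} is legitimate, but you must then prove that at $q=\xi_n$ the resulting $R$-matrix descends to the $n$-dimensional quotients, i.e.\ that inputs $e^a_u\otimes e^b_v$ with $u\ge n$ or $v\ge n$ are mapped into the subspace being killed (using $\{n\}=0$, e.g.\ the factor $\{u,u-m\}$ contains $\{n\}$ when $u\ge n>u-m$). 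That descent step is exactly the root-of-unity content your proposal glosses over, and without it (or the reference to \cite{ADO}) the proof of \eqref{eq:YBE} is incomplete.
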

\begin{prof}
It is easy to check \eqref{eq:Rdelta} for all the generators $K$, $E$ and $F$.  
The relation \eqref{eq:YBE} comes from the braid relation (3.8) of \cite{ADO} since our ${}_a^bR$ is related to 
$G(\alpha, \beta, +)$ defined by (3.5) in \cite{ADO} as follows. 
Using the superscript $ADO$ for symbols introduced in (3.1)-(3.3) of \cite{ADO} one sees that $(\xi^2,m)^{ADO}_{\xi^2}=(-1)^m\xi^{\frac{m(m+1)}{2}}\{m\}!$ and:
$$
\left[\begin{array}{c}
m \\
n \end{array}\right]^{ADO}_{\xi^2}=\qbin{m}{n}\xi^{n(m-n)},\  \left(\begin{array}{c}
m \\
n \end{array}\right)^{ADO}_{\xi^{-4b},\xi^2}=\{2b-n,2b-m\}\xi^{\frac{(m-n)(m+n-1-4b)}{2}}
$$ 
Observing formula (3.5) of \cite{ADO},  one easily sees that if the matrix $G(\alpha,\beta,+)$ satisfies the braid relation, then it will keep satisfying it for any choice of $\mu,\nu,f,F,\eta,\kappa$. Thus we choose $\omega=\xi^2,\alpha= \xi_n^{-4a}$, $\beta=\xi_n^{-4b}$, $\mu=\nu = 1/4$,  $f=F=1$, and $\eta=\kappa=0$ so that $G(\xi_n^{-4a}, \xi_n^{-4b}, +)$ is given by
\begin{multline}
G_{v+m,u-m}^{u,v}(\xi_n^{-4a}, \xi_n^{-4b}, +)
=
\\
\xi_n^{2vu-2bu-2a v +m(-a+u+b-v)-m(m+1)/2} \,
\qbin{u}{u-m} \, 
\{2b-v, 2b-v-m\} 
\end{multline}
and so 
$$
{}_a^bR_{uv}^{v+m\,u-m}
=
\xi_n^{2ab} \, G_{v+m,u-m}^{u,v}(\xi_n^{-4a}, \xi_n^{-4b}, +).
$$
Hence ${}_a^bR$ satisfies the braid relation \eqref{eq:YBE}.  \qed
\end{prof}
The $R$-matrix given by \eqref{equation:R} is represented graphically as follows.  
$$
{}_a^bR_{kl}^{ij} : \quad
\begin{matrix}
i \qquad j \\
\setlength{\unitlength}{0.2mm}
\begin{picture}(50, 50)
\thicklines
\put(50,50){\vector(-1,-1){50}}
\put(0,50){\line(1,-1){20}}
\put(30,20){\vector(1,-1){20}}
\put(-8, 12){$a$}
\put(48, 12){$b$}
\end{picture}
\\
k \qquad l
\end{matrix}\ ,
\qquad
\left({}_a^b R^{-1}\right)_{kl}^{ij} : \quad
\begin{matrix}
i \qquad j \\
\setlength{\unitlength}{0.2mm}
\begin{picture}(50, 50)
\thicklines
\put(0,50){\vector(1,-1){50}}
\put(50,50){\line(-1,-1){20}}
\put(20,20){\vector(-1,-1){20}}
\put(-8, 12){$a$}
\put(48, 12){$b$}
\end{picture}
\\
k \qquad l
\end{matrix}\ .  
$$

\subsection{Clebsch-Gordan quantum coefficients}
Let us consider the tensor product
$V^a \otimes V^b$.  
By using standard arguments on the weight
space decomposition, one can prove the following
decomposition of the tensor product: 
\begin{prop}
Let $V^a$, $V^b$ be highest weight
representations of non-half-integer parameters $a,b$.
If $a+b$ is not a half-integer, then
\begin{equation}
V^a \otimes V^b
=
\bigoplus_{a+b-c=0,
1,
\cdots, n-1} V^c.
\end{equation}
\label{prop:decomposition}
\end{prop}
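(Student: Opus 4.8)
The plan is to realize $V^a\otimes V^b$ as a direct sum of highest weight submodules isomorphic to $V^{a+b},V^{a+b-1},\dots,V^{a+b-(n-1)}$: first exhibit the highest weight vectors, then identify the submodules they generate, and finally use the quantum Casimir element together with a dimension count to see that these submodules already fill up the tensor product. Write $\lambda=a+b$ and work inside $V^a\otimes V^b$, with $E,F,K$ acting through $\Delta$. For each $k\in\{0,1,\dots,n-1\}$ the $K$--weight space of $\xi_n^{\lambda-k}$ is $\bigoplus_{i=0}^k\mc\,e^a_i\otimes e^b_{k-i}$, of dimension $k+1$, and $\Delta(E)=E\otimes K+K^{-1}\otimes E$ maps it into the $\xi_n^{\lambda-k+1}$--weight space, of dimension $k$. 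A direct computation of $\Delta(E)$ on these monomials shows that its matrix in the monomial bases is $k\times(k+1)$ and bidiagonal, with leftmost $k\times k$ block upper triangular and diagonal entries $[k-i]\,\xi_n^{i-a}$, $i=0,\dots,k-1$, which are nonzero because $[j]=\{j\}/\{1\}\neq0$ for $1\le j\le n-1$. Hence $\Delta(E)$ has rank $k$ on this weight space, so its kernel there is one--dimensional; let $w_k$ span it.

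Next, I would identify the submodule generated by $w_k$. At $q=\xi_n$ the quantum binomial coefficients in the coproduct of $F^n$ vanish, so $\Delta(F^n)=F^n\otimes K^n+K^{-n}\otimes F^n$; since $F^n$ acts as $0$ on both $V^a$ and $V^b$ it acts as $0$ on $V^a\otimes V^b$, so $F^nw_k=0$. Using $[E,F]=(K^2-K^{-2})/(q-q^{-1})$ and $\Delta(E)w_k=0$, an induction on $j$ gives $\Delta(E)F^jw_k=[j]\,[2(\lambda-k)-j+1]\,F^{j-1}w_k$, and for $1\le j\le n-1$ both factors are nonzero — the second because $\lambda=a+b\notin\tfrac12\mz$ forces $2(\lambda-k)-j+1\notin\mz$, hence $\{2(\lambda-k)-j+1\}\neq0$. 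Thus $w_k,Fw_k,\dots,F^{n-1}w_k$ are all nonzero; lying in pairwise distinct weight spaces they are independent, so $W_k:=\mathrm{span}\{F^jw_k:0\le j\le n-1\}$ is an $n$--dimensional submodule, closed under $\Delta(E)$, $F$ and $K$. Rescaling the $F^jw_k$, which is legitimate since each $[2(\lambda-k)-j]\neq0$ for the same reason, turns the $W_k$ action into the defining action of $V^{\lambda-k}$; in particular $\lambda-k\notin\tfrac12\mz$, so $V^{\lambda-k}$ is defined, and $W_k\cong V^{\lambda-k}$.

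Finally, I would assemble the $W_k$. Let $\Omega$ be the Casimir element of $\mathcal U_{\xi_n}(sl_2)$; being central, it acts on $W_k\cong V^{\lambda-k}$ by a scalar $\chi_k$, and evaluating $\Omega$ on the highest weight vector (where $E$ acts as $0$) gives $\chi_k=\alpha\big(\xi_n^{2(\lambda-k+1)}+\xi_n^{-2(\lambda-k+1)}\big)+\beta$ with $\alpha\neq0$ and $\beta$ independent of $k$. Since $\xi_n^{2m}+\xi_n^{-2m}=\xi_n^{2m'}+\xi_n^{-2m'}$ only when $m\equiv\pm m'\pmod n$, an equality $\chi_k=\chi_{k'}$ with $0\le k<k'\le n-1$ would force $k'-k\in n\mz$ (impossible, as $0<k'-k\le n-1$) or $2\lambda+2-(k+k')\in n\mz$ (impossible, since $2\lambda\notin\mz$). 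So the $\chi_k$ are pairwise distinct, the $W_k$ lie in distinct $\Omega$--eigenspaces, and $\sum_{k=0}^{n-1}W_k$ is direct; its dimension is $n\cdot n=n^2=\dim(V^a\otimes V^b)$, whence $V^a\otimes V^b=\bigoplus_{k=0}^{n-1}W_k=\bigoplus_{a+b-c=0,1,\dots,n-1}V^c$ (consistency check: both sides have $K$--eigenvalue multiset $\{\xi_n^{a+b-i-j}:0\le i,j\le n-1\}$). The only genuinely delicate point is showing the constructed $W_k$ fill up $V^a\otimes V^b$ without knowing a priori that $V^a\otimes V^b$ is semisimple; the Casimir eigenspace argument is what makes this clean, and both it and the nonvanishing of $\{2(\lambda-k)-j+1\}$ rest squarely on the hypothesis $a+b\notin\tfrac12\mz$.
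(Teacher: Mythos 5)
Your proof is correct. The paper gives no written argument for this proposition---it merely invokes ``standard arguments on the weight space decomposition''---and what you have written is exactly such an argument carried out in full: the rank computation for $\Delta(E)$ on each weight space produces the highest weight vectors, and the hypothesis $a+b\notin\frac{1}{2}\mathbb{Z}$ enters precisely where it should, through the nonvanishing of $\{2(a+b-k)-j+1\}$ and through the separation of the Casimir eigenvalues, which is what yields directness without any appeal to semisimplicity. One cosmetic remark: with the paper's conventions ($KEK^{-1}=\xi_n E$) the Casimir eigenvalue on $W_k$ is $\alpha\bigl(\xi_n^{2(\lambda-k)+1}+\xi_n^{-2(\lambda-k)-1}\bigr)$, i.e.\ the exponent is $2(\lambda-k)+1$ rather than your $2(\lambda-k+1)$, but this normalization detail changes nothing in your dichotomy, since $2\lambda+c-(k+k')\notin\mathbb{Z}$ for any fixed integer $c$.
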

The weight basis 
$e_t^c$
of  
$V_c$
is a linear combination of the tensors $e_u^a \otimes e_v^b$
of
the weight basis of $V^a$ and $V^b$.  
An explicit computation of the coefficients expressing $e^{c}_t$ in terms of $e^a_u\otimes e^b_v$ is provided by the following (see Appendix \ref{app:Clebsch-Gordan} for a proof):  
\begin{teo}[Clebsch-Gordan decomposition]
\label{teo:ClebschGordan}
If $a+b-c\in \{0$, $1$, $\cdots$, $n-1\}$, 
any ${\mathcal U}_{\xi_n}(sl_2)$ module map
$\iota_c^{a, b} : V^c \longrightarrow V^a\otimes V^b$ is a scalar multiple of the inclusion map 
$Y_{c}^{a,b}:V^c\to V^{a}\otimes V^b$ given by 
$$
Y_c^{a,b}(e^c_t)
=
\sum_{u+v-t=a+b-c} C^{a,b,c}_{u,v,t}\, e^a_u\otimes e^b_v, 
$$ 
where $C_{u,v,t}^{a,b,c}$ is given by:
   \begin{multline}
\sqrt{-1}^{c-a-b} \,
(-1)^{(v-t)} \,
\xi_n^\frac{v(2b-v+1)-u(2a-u+1)}{2} \,
\qbin{2c}{2c-t}^{-1}\qbin{2c}{a+b+c-(n-1)}
\\
\sum_{z+w=t}\, 
(-1)^z \, 
\xi_n^\frac{(2z-t)(2c-t+1)}{2} \,
\left[\begin{matrix}
a+b-c \\ u-z
\end{matrix}
\right] \,
\left[\begin{matrix}
2a-u+z \\ 2a-u
\end{matrix} \right]\,
\left[\begin{matrix}
2b-v+w \\ 2b-v
\end{matrix} 
\right]. 
\label{equation:QCGC}
\end{multline}
\end{teo}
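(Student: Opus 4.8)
The plan is to separate the statement into an abstract uniqueness part and an explicit verification part. For the abstract part, Proposition \ref{prop:decomposition} gives $V^a\otimes V^b\cong\bigoplus_{k=0}^{n-1}V^{a+b-k}$, and these summands are simple and pairwise non-isomorphic, since by the previous lemma two of them are isomorphic only when their parameters differ by an element of $2n\mz$, while here they differ by an integer in $\{1,\dots,n-1\}\subset(0,2n)$. Hence, if $a+b-c\in\{0,\dots,n-1\}$, the module $V^c$ occurs with multiplicity exactly one, and Schur's lemma over $\mc$ shows that $\operatorname{Hom}_{{\mathcal U}_{\xi_n}(sl_2)}(V^c,V^a\otimes V^b)$ is one dimensional. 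So any module map $V^c\to V^a\otimes V^b$ is a scalar multiple of any fixed nonzero one, and it remains only to prove that the linear map $Y_c^{a,b}$ defined by \eqref{equation:QCGC} is a nonzero module homomorphism.

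To do that I would use that $V^c$ is cyclic over its highest weight vector $e^c_0$ (note $E e^c_0=0$) and that, since $c\in\mc\setminus\frac12\mz$, the scalar $[2c-t]$ is nonzero for every integer $t$, so $F e^c_t=[2c-t]e^c_{t+1}$ with $e^c_{t+1}=[2c-t]^{-1}F e^c_t$ for $t\le n-2$. Consequently $Y_c^{a,b}$ is a module map provided: (K) for each $t$, every $e^a_u\otimes e^b_v$ occurring in $Y_c^{a,b}(e^c_t)$ has $K$-weight $\xi_n^{c-t}$; (E) $\Delta(E)\bigl(Y_c^{a,b}(e^c_0)\bigr)=0$; (F) $Y_c^{a,b}(F e^c_t)=\Delta(F)\bigl(Y_c^{a,b}(e^c_t)\bigr)$ for all $t$. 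Granting these, $Y_c^{a,b}$ commutes with $K$ and $F$ by (K) and (F), and it commutes with $E$ by induction on $t$, using $E Y_c^{a,b}(e^c_{t+1})=[2c-t]^{-1}\bigl(FE+\tfrac{K^2-K^{-2}}{\xi_n-\xi_n^{-1}}\bigr)Y_c^{a,b}(e^c_t)$, which by the inductive hypothesis and (K) equals $[2c-t]^{-1}Y_c^{a,b}(EF e^c_t)=Y_c^{a,b}(E e^c_{t+1})$, the base case being (E). Condition (K) is immediate because the formula sums only over $u+v-t=a+b-c$. For (E), at $t=0$ the inner sum in \eqref{equation:QCGC} collapses to its $z=w=0$ term, so $C^{a,b,c}_{u,v,0}$ is an explicit $\xi_n$-power and sign times $\qbin{a+b-c}{u}$, and a short computation using $[u]\qbin{a+b-c}{u}=[v+1]\qbin{a+b-c}{u-1}$ shows the corresponding vector is killed by $\Delta(E)=E\otimes K+K^{-1}\otimes E$.

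Condition (F) is the computational core. Expanding $\Delta(F)^{t}=(F\otimes K+K^{-1}\otimes F)^{t}$ yields a sum over the ways of splitting the $t$ copies of $F$ between the two tensor factors, $z$ on the left and $w=t-z$ on the right, each term carrying a power of $\xi_n$ coming from commuting the $K^{\pm1}$'s past the $F$'s; this reproduces the shape of the inner $\sum_{z+w=t}$ in \eqref{equation:QCGC}. Matching the coefficient of a fixed $e^a_u\otimes e^b_v$ on the two sides of (F) then reduces to summing $\qbin{2a-u+z}{2a-u}\qbin{2b-v+w}{2b-v}$ against $\xi_n^{(2z-t)(2c-t+1)/2}$ over $z+w=t$, which is an instance of Lemma \ref{lemma:relation}, equation \eqref{eq:c}, after a suitable substitution of parameters; the normalisations $\qbin{2c}{2c-t}^{-1}$ and $\qbin{2c}{a+b+c-(n-1)}$ are exactly what makes both sides agree. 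Finally $Y_c^{a,b}\neq 0$ since $C^{a,b,c}_{a+b-c,0,0}$ is a unit times $\qbin{2c}{a+b+c-(n-1)}$, and this $q$-binomial is nonzero because all of its factors $\{2c-j\}$ (and $\{1\},\dots$) are nonzero, $2c$ lying outside $\mz$, hence outside $n\mz$.

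The step I expect to be the genuine obstacle is (F): the idea is straightforward — iterate the coproduct of $F$ and invoke \eqref{eq:c} — but the bookkeeping of all the prefactors (the global $\sqrt{-1}^{\,c-a-b}$, the signs $(-1)^{v-t}$ and $(-1)^z$, the half-integer $\xi_n$-exponents such as $\tfrac{v(2b-v+1)-u(2a-u+1)}{2}$ and $\tfrac{(2z-t)(2c-t+1)}{2}$, and the two $q$-binomial normalisations) must be tracked precisely so that, after the expansion, the sum over $z+w=t$ lands exactly in the form to which Lemma \ref{lemma:relation} applies. Everything else is weight bookkeeping or a one-line $q$-binomial manipulation.
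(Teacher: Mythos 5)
Your scaffolding coincides with the paper's: uniqueness via Proposition \ref{prop:decomposition} plus Schur's lemma, the weight check (K), and the $t=0$ highest-weight check (E) are exactly what the paper does (its $\Delta(E)$ computation reduces to the factor $[a+b-c-r]-[p+1]=0$, the same identity you quote), and deducing $E$-equivariance from $K$- and $F$-equivariance by induction is a legitimate variant of the paper's wrap-up. The genuine gap is the step you yourself flag as the core, condition (F), which is not carried out, and the mechanism you propose for it would not go through as described. The identity to be proved is $\Delta(F)\,Y_c^{a,b}(e^c_t)=[2c-t]\,Y_c^{a,b}(e^c_{t+1})$, and \emph{both} sides still contain the inner sum over $z+w$; nothing requires (or permits) evaluating that sum in closed form, which is what Lemma \ref{lemma:relation} provides. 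Moreover the sum you propose to feed into \eqref{eq:c} has the wrong shape: the summand of \eqref{equation:QCGC} contains a third $z$-dependent binomial $\qbin{a+b-c}{u-z}$ which you dropped, and the $z$-slope of the phase there is $2c-t+1$, whereas \eqref{eq:c} would force for the two binomials you kept a slope $\pm(2a-u+2b-v+2)$, equal on the relevant weight space to $\pm(a+b+c-t+2)$; these do not agree for generic colors, so no substitution of parameters turns (F) into an instance of Lemma \ref{lemma:relation}.

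What actually closes (F) (and is essentially the whole content of the paper's proof) is elementary and term-by-term: the coefficient of $e^a_r\otimes e^b_p$ in $\Delta(F)Y_c^{a,b}(e^c_t)$ is $[2a-r+1]\,\xi_n^{b-p}\,C^{a,b,c}_{r-1,p,t}+[2b-p+1]\,\xi_n^{r-a}\,C^{a,b,c}_{r,p-1,t}$; shifting $z$ by one in the first inner sum and $w$ by one in the second makes both into sums over $z+w=t+1$ carrying exactly the three binomials of $C^{a,b,c}_{r,p,t+1}$, and for each fixed $(z,w)$ the two contributions combine, after extracting the phase $\xi_n^{z+c-t}$, through the two-term identity $\xi_n^{z}[w]+\xi_n^{-w}[z]=[z+w]=[t+1]$, while the change of normalisation from $\qbin{2c}{2c-t}^{-1}$ to $\qbin{2c}{2c-t-1}^{-1}$ supplies the remaining factor $[2c-t]/[t+1]$. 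So the closing tool is a reindexing plus a two-term quantum-integer identity, not a $q$-Chu--Vandermonde summation. Note also that your induction needs the boundary case $t=n-1$, i.e. $\Delta(F)Y_c^{a,b}(e^c_{n-1})=0$; this follows from the same computation because the formally defined $C^{a,b,c}_{r,p,n}$ carries $\qbin{2c}{2c-n}^{-1}=\{n\}!/\{2c,2c-n\}=0$, or it can be bypassed by identifying $Y_c^{a,b}$ with the unique module map sending $e^c_0$ to the highest weight vector you produced. Until (F) is established along these lines, the proof is missing precisely the computation that constitutes the paper's argument.
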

The coefficient
$C_{u,v,t}^{a,b,c}$ is called the 
{\it Clebsch-Gordan quantum coefficient} (CGQC).  
\begin{figure}[htb]
$$
\begin{matrix}
c & & \quad c & & c \quad
\\
\raisebox{-5mm}{\includegraphics[scale=0.5]{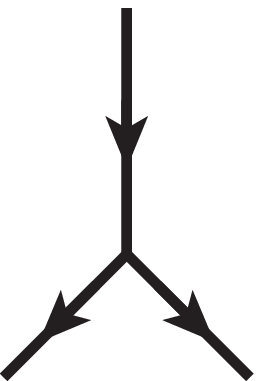}}
& = &
\raisebox{-5mm}{\includegraphics[scale=0.5]{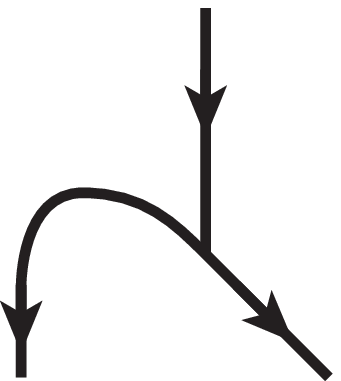}}
& = &
\raisebox{-5mm}{\includegraphics[scale=0.5]{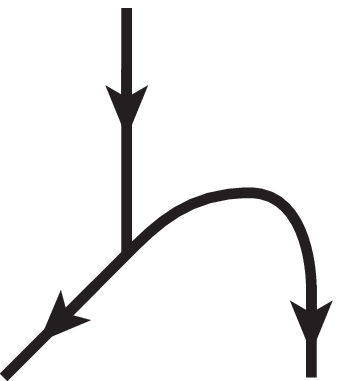}}
\\
a \qquad\quad b & & a \qquad\quad b & & a \qquad\quad b 
\\
Y_{a,b}^c &  & L_{a,b}^c  & & R_{a,b}^c
\end{matrix}
$$
\caption{The first equality is the definition of $Y^{c}_{a,b}$, the second is Lemma \ref{lem:nonordered}.}\label{fig:moveleg}
\end{figure}
In order to get invariants of unoriented graphs,  the operators associated to the other elementary graphs must constructed by ``moving a leg of the $Y$-shaped graph up or down" (see Figure \ref{fig:moveleg}). 
Hence we define projectors $L_{a,b}^c:V^a\otimes V^b\to V^c$ out of $Y^{a,b}_c$ as:  $L_{a,b}^c= (\cap_{a,n-1-a}\otimes Id_c)\circ (Id_a\otimes Y_{b}^{n-1-a,c})$. So letting $L_{a,b}^c(e^a_{u}\otimes e^b_{v})=\sum_tL^{a,b,c}_{u,v,t} e^c_t$, the coefficients are explicitly given by 
\begin{equation}\label{eq:p}
L^{a,b,c}_{u,v,t}
= 
C^{n-1-a,c,b}_{n-1-u,t,v}\, \xi_n^{-a(n-1)}\, \xi_n^{(n-1)u}
\end{equation}
One may also define $R_{a,b}^{c}$ by ``pulling-up the right leg" i.e. by setting 
$
R_{a,b}^c
=
 (Id_c\otimes \cap_{n-1-b,b})\circ (Y_{a}^{c,n-1-b}\otimes Id_b)
 $ 
 (as in the r.h.s. of Figure \ref{fig:moveleg}), 
 but as the following lemma shows, the two choices are equivalent (see Appendix \ref{app:bend} for a proof).
\begin{lemma}\label{lem:nonordered}
It holds that
$$
C^{n-1-a,c,b}_{n-1-u,t,v}\,  \xi_n^{-a(n-1)}\, \xi_n^{(n-1)u}
=
C^{c,n-1-b,a}_{t,n-1-v,u}\, \xi_n^{-(n-1-b)(n-1)}\, \xi_n^{(n-1)(n-1-v)}. 
$$
\label{lemma:bend}
\end{lemma}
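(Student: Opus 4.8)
The plan is to prove Lemma \ref{lem:nonordered} by a direct comparison of the two graphical constructions of the ``bent'' projector. The two sides of the claimed identity are, respectively, the matrix coefficients of $L_{a,b}^c=(\cap_{a,n-1-a}\otimes \id_c)\circ(\id_a\otimes Y_b^{n-1-a,c})$ and of $R_{a,b}^c=(\id_c\otimes\cap_{n-1-b,b})\circ(Y_a^{c,n-1-b}\otimes\id_b)$. First I would observe that both $L_{a,b}^c$ and $R_{a,b}^c$ are ${\mathcal U}_{\xi_n}(sl_2)$-module maps from $V^a\otimes V^b$ to $V^c$ (being compositions of module maps: the Clebsch–Gordan inclusions $Y$ of Theorem \ref{teo:ClebschGordan} and the duality contractions $\cap$ of \eqref{equation:cap}). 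By Proposition \ref{prop:decomposition}, since $a+b-c\in\{0,1,\ldots,n-1\}$, the space of such module maps $\operatorname{Hom}_{{\mathcal U}_{\xi_n}}(V^a\otimes V^b,V^c)$ is one-dimensional. Hence $L_{a,b}^c=\lambda\, R_{a,b}^c$ for some scalar $\lambda\in\mathbb C$, and it only remains to show $\lambda=1$.

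To pin down $\lambda$ it suffices to evaluate both maps on a single, conveniently chosen weight vector and compare one nonzero matrix entry. The natural choice is a ``highest-weight-type'' corner of the summation where the index ranges collapse: for instance take $t$ so that $e_t^c$ is the image of the lowest (or highest) weight vector, i.e. pick $u,v,t$ with $u+v-t=a+b-c$ and with one of $u$, $n-1-v$ extremal, so that the inner sums in \eqref{equation:QCGC} reduce to one term and the $q$-binomials become explicit products of $\{\,\cdot\,\}$-symbols. Then I would compute $L^{a,b,c}_{u,v,t}$ from \eqref{eq:p} and $C^{c,n-1-b,a}_{t,n-1-v,u}\,\xi_n^{-(n-1-b)(n-1)}\,\xi_n^{(n-1)(n-1-v)}$ directly from \eqref{equation:QCGC}, using the four bulleted symmetry identities for $\qbin{a}{b}$ and $\{a\}$ listed before Lemma \ref{lemma:relation} (in particular $\qbin{a}{b}=\qbin{n-1-b}{n-1-a}$, $\{a\}=\{n-a\}$, and $\{a\}!\{n-1-a\}!=\sqrt{-1}^{\,n-1}n$) to reconcile the prefactors $\sqrt{-1}^{\,\cdot}$, the sign powers $(-1)^{\cdot}$, and the powers of $\xi_n$. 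The quadratic exponents of $\xi_n$ in the two expressions will have to match after substituting $c\mapsto a,\ a\mapsto c,\ b\mapsto n-1-b$ and the index relabelling; this is where most of the bookkeeping lies.

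Alternatively — and this is probably cleaner — one can avoid choosing special indices by arguing purely diagrammatically: apply the snake/zig-zag relations for $\cap_{a,b}$ and $\cup_{a,b}$ (which follow from \eqref{equation:cap}–\eqref{equation:cup}) to slide the cap in the definition of $L_{a,b}^c$ across the trivalent vertex and turn it into the definition of $R_{a,b}^c$, keeping track of the scalar produced by each zig-zag straightening. The isotopy of planar graphs underlying ``move a leg from the left to the right'' decomposes into two elementary moves, each governed by a zig-zag identity whose scalar is a power of $\xi_n$ determined by \eqref{equation:cap} and \eqref{equation:cup}; multiplying these scalars should give exactly $1$, i.e. the prefactors $\xi_n^{-a(n-1)}\xi_n^{(n-1)u}$ on the left and $\xi_n^{-(n-1-b)(n-1)}\xi_n^{(n-1)(n-1-v)}$ on the right are precisely the compensating factors that make the identity hold. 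I expect the main obstacle to be neither conceptual step but the precise normalization: making sure the zig-zag scalars (or, in the computational approach, the quadratic $\xi_n$-exponents and the $\sqrt{-1}$-prefactors coming from $\qbin{2c}{\cdots}$ versus $\qbin{2a}{\cdots}$) are tracked without sign or exponent errors, since both $C^{a,b,c}$ and the contractions carry delicate half-integer-looking exponents like $\xi_n^{(2z-t)(2c-t+1)/2}$. Because this is a routine (if lengthy) verification, the full details are deferred to Appendix \ref{app:bend}.
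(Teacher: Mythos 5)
Your first (computational) plan is essentially the paper's proof: the paper observes that the space of module maps $V^a\otimes V^b\to V^c$ is at most one-dimensional (Proposition \ref{prop:decomposition} plus non-isomorphism of the $V^{c+i}$), and then verifies the identity at the single corner triple $u=0$, $v=a+b-c$, $t=0$, where the sums in \eqref{equation:QCGC} collapse to one term and the equality reduces to the $q$-binomial symmetry $\qbin{n-1-a+b-c}{n-1-2a}=\qbin{2a}{c+a-b}$. Just note that your ``cleaner'' diagrammatic alternative is not really available as stated, since sliding the cap past the trivalent vertex requires a rotation rule relating $Y_b^{n-1-a,c}$ to $Y_a^{c,n-1-b}$, which is precisely the content of the lemma, so the explicit corner evaluation is the route to take.
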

Therefore we have well defined projectors $Y_{a,b}^c:V^a\otimes V^b\to V^c$ (about the notation: $Y^{a,b}_c$ and $Y_{a,b}^c$ are distinguished by the position of the indices). In order to have an explicit formula, applying two times Lemma \ref{lemma:bend} we get the following:   
\begin{prop}\label{prop:Yprojectors}
The projection $Y_{a, b}^c : V^a \otimes V^b \to V^c$ is given by
\begin{equation}
Y_{a, b}^c(e_u^a \otimes e_v^b)
=
\sum_t C_{n-1-v, n-1-u, n-1-t}^{n-1-b, n-1-a, n-1-c}\  e_t^c.  
\label{equation:projection}
\end{equation}
\end{prop}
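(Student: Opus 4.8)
The plan is to derive the formula for $Y_{a,b}^c$ by applying Lemma \ref{lemma:bend} twice, exactly as the text indicates, tracking the bookkeeping of the dualities carefully. First I would recall the two equivalent descriptions of the projector: $L_{a,b}^c = (\cap_{a,n-1-a}\otimes \id_c)\circ(\id_a\otimes Y_b^{n-1-a,c})$, with matrix coefficients $L^{a,b,c}_{u,v,t} = C^{n-1-a,c,b}_{n-1-u,t,v}\,\xi_n^{-a(n-1)}\,\xi_n^{(n-1)u}$ coming from \eqref{eq:p}, and the alternative $R_{a,b}^c = (\id_c\otimes\cap_{n-1-b,b})\circ(Y_a^{c,n-1-b}\otimes\id_b)$, whose coefficients by Lemma \ref{lemma:bend} equal $C^{c,n-1-b,a}_{t,n-1-v,u}\,\xi_n^{-(n-1-b)(n-1)}\,\xi_n^{(n-1)(n-1-v)}$. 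These agree, so $Y_{a,b}^c$ is well defined; the task is to identify the combination that produces the clean right-hand side of \eqref{equation:projection}.

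The key step is that ``pulling up a leg'' is an involution up to the identification $(V^a)^{**}\cong V^a$: starting from $Y^{a,b}_c$, bending the left leg up gives $L_{a,b}^c:V^a\otimes V^b\to V^c$; bending a leg of $L_{a,b}^c$ back down should return (a scalar multiple of) $Y^{a,b}_c$, and more usefully, performing the bending operation twice — once to produce $Y_{a,b}^c$ and then once more — should bring us back to something expressible again in terms of the original $C^{a,b,c}$. Concretely, I would write $Y_{a,b}^c$ as $L^{a,b,c}$ re-expressed via Lemma \ref{lemma:bend} applied to the pair $(L^{a,b,c}$'s defining $C)$, then apply the identity a second time to the resulting $C$-coefficient. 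Each application of Lemma \ref{lemma:bend} replaces a triple of indices $(\alpha,\beta,\gamma)$ by $(n-1-\beta,n-1-\alpha,n-1-\gamma)$ (with the corresponding permutation of the lower indices and a $\xi_n$-prefactor), so two applications compose to send $C^{a,b,c}_{u,v,t}$ to $C^{n-1-b,n-1-a,n-1-c}_{n-1-v,n-1-u,n-1-t}$ up to an overall power of $\xi_n$. The bulk of the work is then checking that these two prefactors cancel: collecting the four $\xi_n$-powers from the two invocations of Lemma \ref{lemma:bend} and verifying, using $\xi_n^{2n}=1$ and the identity $\xi_n^{a(n-1)}\xi_n^{-a(n-1)}=1$, that their product is $1$. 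This is a routine but slightly delicate computation with exponents modulo $2n$.

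The main obstacle I anticipate is precisely this exponent bookkeeping: the prefactors $\xi_n^{-a(n-1)}\xi_n^{(n-1)u}$ and $\xi_n^{-(n-1-b)(n-1)}\xi_n^{(n-1)(n-1-v)}$ have the uncomfortable feature that they depend on the running indices $u,v$, and after two applications one must be sure that the dependence on the summation variable $t$ (and on $u,v$) drops out entirely, leaving a genuine scalar — which it must, since the left-hand side of \eqref{equation:projection} is manifestly a $\mathcal U_{\xi_n}(sl_2)$-module map from $V^a\otimes V^b$ to $V^c$ and the space of such maps is one-dimensional by Theorem \ref{teo:ClebschGordan}. In fact, the cleanest route is to use this one-dimensionality as a shortcut: once one knows both sides of \eqref{equation:projection} are module maps $V^a\otimes V^b\to V^c$, it suffices to check that they agree on a single nonzero vector — say the highest weight vector $e_0^a\otimes e_0^b$ paired against $e_0^c$ — where the $C$-coefficients and all prefactors are explicit and small, pinning down the constant to be $1$. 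I would present both: the structural argument (two bends return to $Y$, module maps, dimension one) to explain why the formula holds, and the single-vector normalization check to nail the scalar, relegating any remaining exponent arithmetic to a short verification. The hard part, if one insists on the direct double-application of Lemma \ref{lemma:bend}, is organizing the four $\xi_n$-prefactors so their cancellation is transparent rather than miraculous.
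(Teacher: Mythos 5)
Your proposal follows essentially the same route as the paper, which obtains Proposition \ref{prop:Yprojectors} precisely by applying Lemma \ref{lemma:bend} twice to the coefficients $L^{a,b,c}_{u,v,t}$ of \eqref{eq:p}; the exponent bookkeeping you worry about works out because the total $\xi_n$-prefactor after the two applications is $\xi_n^{(n-1)(a+b-c+t-u-v)}=1$ by the weight constraint $u+v-t=a+b-c$, not by any $\bmod\ 2n$ coincidence. Your fallback via one-dimensionality of the morphism space plus a single matrix-entry check is also sound (though the entry should be taken at, say, $u=0$, $v=a+b-c$, $t=0$ rather than at $e_0^a\otimes e_0^b$ against $e_0^c$), so the argument is correct.
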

In what follows, the Clebsch-Gordan quantum coefficients given by \eqref{equation:QCGC} will be represented graphically as follows:
$$
C_{m_1,m_2,m}^{a,b,c}
:
\begin{matrix}
m_1 \qquad\qquad m_2
\\
\setlength{\unitlength}{0.3mm}
\begin{picture}(50,50)
\thicklines
\put(0,50){\vector(1,-1){25}}
\put(50,50){\vector(-1,-1){25}}
\put(25,25){\vector(0,-1){25}}
\put(1,30){$a$}
\put(40,30){$b$}
\put(14,7){$c$}
\end{picture}
\\ m
\end{matrix},
\quad
C_{n-1-m_2,n-1-m_1,n-1-m}^{n-1-b,n-1-a,n-1-c}
:
\begin{matrix}
m
\\
\setlength{\unitlength}{0.3mm}
\begin{picture}(50,50)
\thicklines
\put(25,25){\vector(-1,-1){25}}
\put(25,25){\vector(1,-1){25}}
\put(25,50){\vector(0,-1){25}}
\put(1,15){$a$}
\put(40,15){$b$}
\put(14,35){$c$}
\end{picture}
\\
m_1 \qquad\qquad m_2
\end{matrix}.
$$ 
(Here in the right part we use Proposition \ref{prop:Yprojectors} to rewrite $Y_{a,b}^c$.)  
The following Lemma is proved in Appendix \ref{app:thetaval}.
\begin{lemma}\label{lem:thetagraph}
It holds:
\begin{equation}\label{eq:theta}
\raisebox{-0.8cm}
{\psfrag{a}{$a$}\psfrag{b}{$b$}\psfrag{c}{$c$}
 \includegraphics[width=0.8cm]{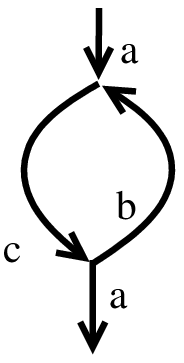}}=\qbin{2a+n}{2a+1}Id_a 
\end{equation} 

\end{lemma}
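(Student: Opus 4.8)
The plan is to prove the $\theta$-graph evaluation $\eqref{eq:theta}$ by a direct computation: the closed $\theta$-graph with all edges colored $a$, $b$, $c$ (with $a+b-c\in\{0,1,\dots,n-1\}$) is, by Schur's Lemma, a scalar multiple of $\mathrm{Id}_a$ on $V^a$, because it represents an endomorphism of the simple module $V^a$ obtained by composing $Y^{a,b}_c:V^a\otimes V^b\to\dots$ wait — more precisely it is the composition $Y_{a,b}^{?}$-type maps closing up two of the three legs. So the only content is the identification of the scalar. First I would set up the picture explicitly: the $\theta$-graph is obtained by stacking a ``$Y$'' vertex $Y^c_{a,b}$ (or $Y_{c}^{a,b}$) on top of the dual vertex and closing the $b$- and $c$-strands (suitably re-colored as $n-1-b$, $n-1-c$ by the duality $\cap$), leaving a single $a$-colored strand in and out. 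Because the result is a scalar $\lambda$ times $\mathrm{Id}_a$, it suffices to evaluate one matrix entry, e.g.\ the coefficient of $e^a_0$ in the image of $e^a_0$, which kills most terms in the Clebsch--Gordan sums.

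The key steps, in order: (i) write the $\theta$-graph as an explicit composition of the maps $Y^{a,b}_c$, $Y^{a,b}_c$-duals (via Proposition \ref{prop:Yprojectors}), and the cups/caps $\cap_{a,b}$, $\cup_{a,b}$ from \eqref{equation:cap}, \eqref{equation:cup}; (ii) use Schur's Lemma (the endomorphism of the simple $V^a$ is scalar — this is where Lemma \ref{teo:ClebschGordan}'s uniqueness statement is invoked) to reduce to computing a single well-chosen matrix coefficient; (iii) plug in $t=0$ or $u=0$ in the relevant CGQC \eqref{equation:QCGC} so that the inner sum $\sum_{z+w=t}$ collapses to one term and the $q$-binomials simplify to products of the form $\qbin{2c}{\cdot}$, $\qbin{2a-u+z}{2a-u}$; (iv) simplify the resulting product of $\xi_n$-powers and $q$-binomials, repeatedly using the four bullet identities for $\{a\}$, $\{a\}!$, $\qbin{a}{b}$ listed before Lemma \ref{lemma:relation}, together with Lemma \ref{lemma:relation} itself (equation \eqref{eq:c}) to resum any leftover one-variable sum; (v) read off that the scalar equals $\qbin{2a+n}{2a+1}$. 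One should also double-check the claim is independent of $b$ and $c$ — which is forced by the fact that $Y^{a,b}_c$ is, up to the fixed normalization chosen in Theorem \ref{teo:ClebschGordan}, \emph{the} inclusion, so the $b,c$-dependence must cancel; this is a useful consistency check during the algebra.

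The main obstacle I expect is purely computational bookkeeping: the CGQC \eqref{equation:QCGC} is a double sum with three $q$-binomials, a sign $(-1)^z$, and a half-integer exponent $\xi_n^{(2z-t)(2c-t+1)/2}$, and when you compose two such vertices and contract with a cup and a cap you get a multi-fold sum that must be collapsed down to the single monomial $\qbin{2a+n}{2a+1}$. Choosing the right matrix entry (one that forces $t=0$, hence $z=w=0$, in the vertex sum) is what makes this tractable, so the real work is verifying that with that choice the remaining single-index sum is exactly an instance of \eqref{eq:c} (with suitable $a\mapsto$, $b\mapsto$, $c\mapsto$ substitutions) and that the prefactor $\sqrt{-1}^{\,c-a-b}\,\qbin{2c}{2c-t}^{-1}\qbin{2c}{a+b+c-(n-1)}$ combines with the dual vertex's prefactor and the cup/cap normalizations $\xi_n^{-(a-i)(n-1)}$ to give $\qbin{2a+n}{2a+1}$ and nothing else. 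A secondary subtlety is keeping track of the re-colorings $b\mapsto n-1-b$, $c\mapsto n-1-c$ induced by the caps and of the identity $\qbin{2a+n}{2a+1}=\qbin{n-1-(2a+1)}{n-1-(2a+n)}$-type rewritings, so that the final answer is presented in the form stated in \eqref{eq:theta}; since everything in sight is a Laurent polynomial in $\xi_n^a,\xi_n^b,\xi_n^c$, one may alternatively (as in the proof of Lemma \ref{lemma:relation}) verify the identity for generic $q$ and non-negative integer colors, where it reduces to the classical $\theta$-symbol evaluation, and then specialize $q\to\xi_n$.
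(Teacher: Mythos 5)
Your proposal is correct and follows essentially the same route as the paper's proof in the appendix: Schur's lemma reduces the $\theta$-graph to a scalar, one evaluates the composition of two Clebsch--Gordan vertices (recolored via duality and Lemma \ref{lemma:bend}) on the highest weight vector $e^a_0$, the inner CGQC sum collapses to a single term, and the remaining one-variable sum is resummed by Lemma \ref{lemma:relation} (equation \eqref{eq:c}) to give $\qbin{2a+n}{2a+1}$.
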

The decomposition of $V^a \otimes V^b$ is expressed by  $Y_c^{a, b}$ and $Y_{a, b}^{c}$ as follows.  
\begin{prop}
Let
$\operatorname{id}$ be the identity operator on $V^a \otimes V^b$
Then 
\begin{equation}
\operatorname{id}= 
\sum_{c\,:\,  a+b-c = 0, 1, \cdots, n-1}
\qbin{2c+n}{2c+1}^{-1}
Y_c^{a,b} \, Y_{a, b}^c.  
\label{eq:decomposition}
\end{equation}
\end{prop}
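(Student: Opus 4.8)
The plan is to prove the identity \eqref{eq:decomposition} by checking it as an equality of endomorphisms of $V^a\otimes V^b$, using the direct sum decomposition of Proposition \ref{prop:decomposition}. First I would recall that the map $P_c := \qbin{2c+n}{2c+1}^{-1}\,Y_c^{a,b}\,Y_{a,b}^c$ is, by construction, a ${\mathcal U}_{\xi_n}(sl_2)$-module endomorphism of $V^a\otimes V^b$: it factors through $V^c$ via the module maps $Y_{a,b}^c$ and $Y_c^{a,b}$. By Schur's lemma (which applies here because the summands $V^c$ are simple and pairwise non-isomorphic, $a+b-c$ ranging over distinct residues mod $2n$), each $P_c$ is a scalar multiple of the projection onto the $V^c$-summand and annihilates all other summands. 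So the content of the proposition is exactly that this scalar is $1$, i.e. that $P_c$ restricts to the identity on the copy of $V^c$ sitting inside $V^a\otimes V^b$.

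The key computation is therefore $Y_{a,b}^c\circ Y_c^{a,b} = \qbin{2c+n}{2c+1}\,\mathrm{id}_{V^c}$, which is precisely the content of Lemma \ref{lem:thetagraph}, equation \eqref{eq:theta}: the ``open theta graph'' obtained by composing the two trivalent vertices $Y_c^{a,b}$ (going from $V^c$ into $V^a\otimes V^b$) and $Y_{a,b}^c$ (going back) evaluates to $\qbin{2a+n}{2a+1}\,\mathrm{id}$ — I would match conventions so that this reads $\qbin{2c+n}{2c+1}\,\mathrm{id}_c$ after relabeling the outer edge by $c$ (the lemma is stated with the surviving edge labeled $a$, but applied to the composite $Y_{a,b}^c\circ Y_c^{a,b}$ the surviving edge is the $c$-edge). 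Granting this, $P_c\circ Y_c^{a,b} = \qbin{2c+n}{2c+1}^{-1}\,Y_c^{a,b}\circ(Y_{a,b}^c\circ Y_c^{a,b}) = Y_c^{a,b}$, so $P_c$ acts as the identity on the image of $Y_c^{a,b}$, which is the $V^c$-summand.

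To conclude, I would write $\mathrm{id}_{V^a\otimes V^b} = \sum_c \pi_c$ where $\pi_c$ is the projection onto the $V^c$-summand along the others (this sum of projections is the identity, by Proposition \ref{prop:decomposition}), and then observe that $\pi_c = P_c$: both are module endomorphisms, both kill $V^{c'}$ for $c'\neq c$ by Schur, and both act as the identity on $V^c$ — $\pi_c$ by definition and $P_c$ by the previous paragraph. Hence $\mathrm{id} = \sum_c \pi_c = \sum_c P_c$, which is \eqref{eq:decomposition}.

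The main obstacle is bookkeeping of the normalization constant and of the conventions: one must make sure the quantum binomial appearing in \eqref{eq:theta} is attached to the right edge and that no extra factor (coming from the duality pairings $\cap_{a,b}$ used to define $Y_{a,b}^c$ out of $Y^{a,b}_c$, and from the identification $Y^{a,b}_c$ versus $Y_{a,b}^c$) has been dropped. Everything else — simplicity of the $V^c$, their pairwise non-isomorphism, the module-map property of $P_c$ — is immediate from the results already established above, so the proof is short modulo Lemma \ref{lem:thetagraph}.
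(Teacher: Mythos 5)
Your proposal is correct and follows essentially the same route as the paper: the paper's proof likewise deduces from Lemma \ref{lem:thetagraph} (the theta-graph value, with the surviving edge relabeled $c$) that $\qbin{2c+n}{2c+1}^{-1}Y_c^{a,b}Y_{a,b}^c$ is the identity on the $V^c$-summand, and then sums the resulting projections using the decomposition of Proposition \ref{prop:decomposition}. Your write-up just makes the Schur's-lemma bookkeeping explicit where the paper leaves it implicit.
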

\begin{proof}{
Formula \eqref{eq:theta} implies that
$
Y_c^{a,b}\,  Y_{a, b}^c \, Y_c^{a,b} \, Y_{a, b}^c
=
\qbin{2c+n}{2c+1} \, Y_c^{a,b\,}  Y_{a, b}^c.  
$
 Therefore, $\qbin{2c+n}{2c+1}^{-1}\, 
Y_c^{a,b} \, Y_{a, b}^c$ is the identity on the
subspace of $V^a \otimes V^b$ isomorphic to $V^c$.  
}
\end{proof}
\subsection{Quantum $6j$-symbols}
In this subsection, we compute the $SL(2, \mathbb C)$ quantum $6j$-symbol actually by using the CGQC introduced previously.  
The quantum $6j$-symbol is defined by the relation in Figure \ref{fig:6j}.
The left diagram represents the composition of two inclusions
$V^j \to V^{j_{12}} \otimes V^{j_3}$ and $V^{j_{12}} \to V^{j_1} \otimes V^{j_2}$ while the right diagram represents the composition of two inclusions
$V^j \to V^{j_{1}} \otimes V^{j_{23}}$ and $V^{j_{23}} \to V^{j_2} \otimes V^{j_3}$.  
Let $\iota_l,\iota_r$ be the resulting maps.
%
\begin{figure}[b]
$$
\begin{matrix}
j_1 \qquad j_2  \qquad j_3 \\
\includegraphics[scale=0.3]{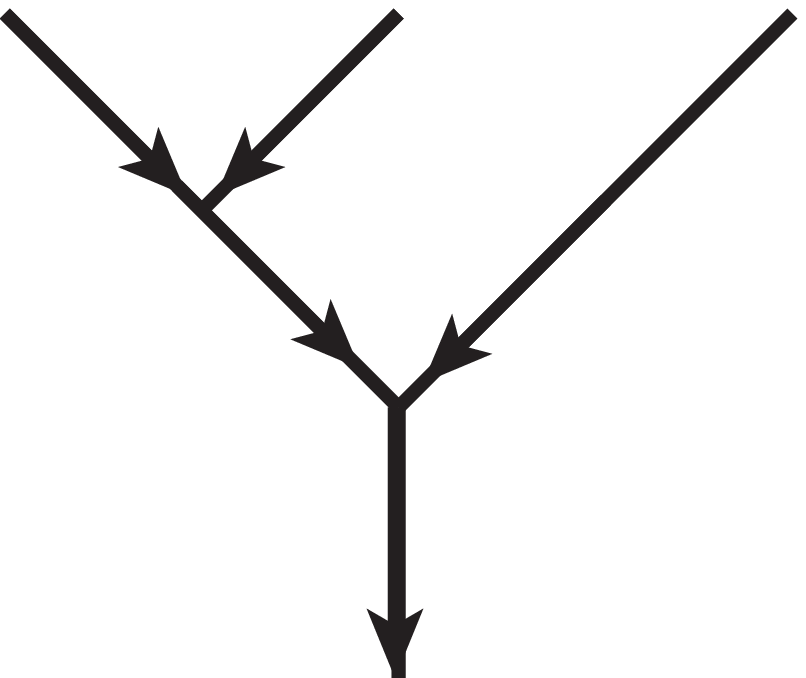} \\
j
\end{matrix}
\hspace{-2cm} j_{12}
\hspace{17mm}
=
\sum_{j_{23}}
\left\{
\begin{matrix}
j_1 & j_2 & j_{12} \\
j_3 & j & j_{23}
\end{matrix}
\right\}_{\xi_n} \,
\begin{matrix}
j_1 \qquad j_2  \qquad j_3 \\
\includegraphics[scale=0.3]{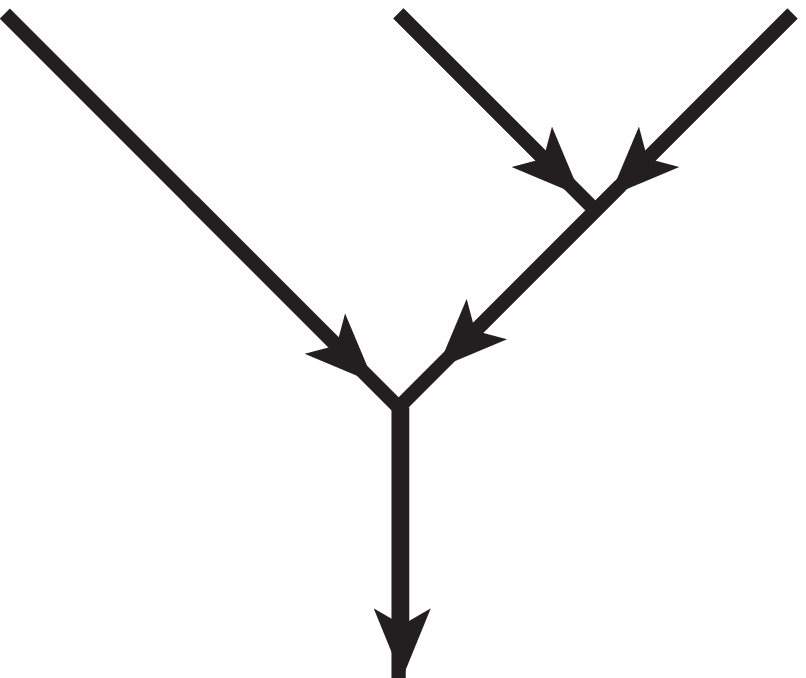} \\
j
\end{matrix}
\hspace{-1cm} j_{23}
\hspace{7mm}
$$
\caption{The quantum $6j$-symbol is defined by the above diagram.}
\label{fig:6j}
\end{figure}
Then Figure \ref{fig:6j} translates the equality
$
\iota_l(v) = 
\sum_{j_{23}}
\left\{
\begin{matrix}
j_1 & j_2 & j_{12} \\
j_3 & j & j_{23}
\end{matrix}
\right\}_{\xi_n} 
\iota_r(v)
$
for $v \in V^j$. 
The quantum $6j$-symbols for non-integral highest weight representations are given as follows (see Appendix \ref{app:theo6j} for a proof). 

\begin{teo}\label{theo:6j}
For $a$, $b$, $\cdots$, $f\in \mathbb{C}\setminus \frac{1}{2}\mathbb{Z}$ satisfying
$
a+b-e, a+f-c, b+d-f, d+e-c \in \bZ
$,
\begin{multline}
\left\{
\begin{matrix}
a & b & e \\
d & c & f
\end{matrix}
\right\}_{\xi_n}
=
\\
(-1)^{n-1+B_{afc}} \, 
{\qbin{2f+n}{2f+1}}^{-1} \, 
\dfrac{\{B_{dec}\}! \, \{B_{abe}\}!}
{\{B_{bdf}\}! \, \{B_{afc}\}!} \, 
\qbin{2e}{A_{abe}+1-n} \, 
{\qbin{2e}{B_{ecd}}}^{-1} \,
\\
\sum_{z = \max(0, -B_{bdf}+B_{dec})}^{\min(B_{dec}, B_{afc})}\!\!\!\!\!\!\!\!\!\!\!\!\!\!\!\!
(-1)^z 
\qbin{A_{afc}+1}{\!\!2c+z+1\!\!\!} \!
\qbin{\!\!B_{acf}+z\!\!\!}{B_{acf}} \!
\qbin{\!\!B_{bfd}+B_{dec}-z\!\!\!}{B_{bfd}} \!
\qbin{\!\!B_{dce}+z\!\!\!}{B_{dfb}}\!, 
\label{equation:6j}
\end{multline}
where
\begin{equation}
A_{xyz} = x + y + z,
\qquad
B_{xyz} = x+y-z.  
\label{eq:ABC}
\end{equation}
\end{teo}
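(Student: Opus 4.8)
The plan is to compute the $6j$-symbol directly from its defining relation in Figure \ref{fig:6j}, unwinding both sides in the weight basis and matching coefficients. Concretely, the left-hand map $\iota_l$ is the composition $Y^{j_{12},j_3}_j \circ (Y^{j_1,j_2}_{j_{12}}\otimes \id_{j_3})$, so applying it to $e^j_t$ and using Theorem \ref{teo:ClebschGordan} twice produces a double sum of the form $\sum C^{j_1,j_2,j_{12}}_{\cdot}\,C^{j_{12},j_3,j}_{\cdot}\, e^{j_1}_{\cdot}\otimes e^{j_2}_{\cdot}\otimes e^{j_3}_{\cdot}$. Similarly $\iota_r = Y^{j_1,j_{23}}_j \circ (\id_{j_1}\otimes Y^{j_2,j_3}_{j_{23}})$ gives $\sum C^{j_2,j_3,j_{23}}_{\cdot}\,C^{j_1,j_{23},j}_{\cdot}\, e^{j_1}_{\cdot}\otimes e^{j_2}_{\cdot}\otimes e^{j_3}_{\cdot}$. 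The $6j$-symbol is then the ratio forced by equating these tensor expansions; since $\iota_r(e^j_t)$ has a nonzero coefficient on a convenient pure tensor (e.g. the one involving the highest-weight vector $e^{j_{23}}_0$ inside $V^{j_{23}}$, equivalently choosing $u,v,w$ extremal), one isolates the symbol by evaluating both sides against the dual of that single pure tensor. This reduces everything to a finite sum over the internal summation variable $z$ coming from formula \eqref{equation:QCGC}.

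The key steps, in order, are: (i) write $\iota_l(e^j_t)$ explicitly, substituting \eqref{equation:QCGC} for each of the two CGQC factors, thus obtaining a sum over the internal variables $z_1$ (for the $V^{j_1}\otimes V^{j_2}\to$-type coefficient) and $z_2$ (for the other), as well as over the weight indices; (ii) do the same for $\iota_r(e^j_t)$; (iii) pick the specific weight indices $(u,v,w)$ on the three tensor factors that make the right-hand side collapse to its simplest possible value — ideally one internal sum degenerates to a single term and several $q$-binomials become $1$ or reduce via the identities $\{a\}!\{n-1-a\}!=\{n-1\}!$ and $\qbin{a}{b}=\qbin{n-1-b}{n-1-a}$ listed before Lemma \ref{lemma:relation}; (iv) on the left-hand side, with those same weight indices fixed, recognize that one of the two internal sums can be carried out in closed form using Lemma \ref{lemma:relation} (equation \eqref{eq:c}), which is exactly the combinatorial identity needed to compress a convolution of two $q$-binomials into one; (v) collect the surviving prefactors, translate all the exponents of $\xi_n$ and all the factorials into the $A_{xyz}$, $B_{xyz}$ notation of \eqref{eq:ABC}, and check that the remaining single sum over $z$ matches \eqref{equation:6j} with the stated summation range $\max(0,-B_{bdf}+B_{dec})\le z\le \min(B_{dec},B_{afc})$; (vi) verify the range itself by tracking which $q$-binomials $\qbin{a}{b}$ are nonzero, i.e. the constraints $a-b\in\{0,\ldots,n-1\}$, which is what cuts the naive sum down to the indicated window.

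The main obstacle I expect is step (v): the bookkeeping of the powers of $\xi_n$. Each CGQC in \eqref{equation:QCGC} carries a half-integer quadratic exponent like $\frac{v(2b-v+1)-u(2a-u+1)}{2}$ plus a sign $\sqrt{-1}^{\,c-a-b}$ and a factor $(-1)^{v-t}$, and in the composition these combine with the analogous data from the second coefficient and with the contribution of applying Lemma \ref{lemma:relation} (whose right-hand side itself produces a factor $\xi_n^{\pm(b+1)c}$). Getting the overall sign $(-1)^{n-1+B_{afc}}$ and the precise quadratic-exponent cancellations right — so that the final answer is manifestly a Laurent-type expression with the clean prefactor $\qbin{2f+n}{2f+1}^{-1}\frac{\{B_{dec}\}!\{B_{abe}\}!}{\{B_{bdf}\}!\{B_{afc}\}!}\qbin{2e}{A_{abe}+1-n}\qbin{2e}{B_{ecd}}^{-1}$ — is delicate but mechanical. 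A secondary subtlety is making sure the chosen weight indices $(u,v,w)$ really do lie in the allowed ranges for \emph{all} the CGQC involved so that the evaluation is legitimate; one may need to choose them at the ``opposite'' extreme (lowest weights rather than highest) and then use the duality identities to bring the result to the symmetric form written in Theorem \ref{theo:6j}. Since the paper defers this to Appendix \ref{app:theo6j}, I would structure the appendix proof exactly along lines (i)–(vi) above, isolating the $\xi_n$-exponent computation into a separate displayed lemma to keep the argument readable.
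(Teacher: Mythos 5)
There is a genuine gap at your step (iii), and it is exactly the step that carries the content of the theorem. The defining relation of Figure \ref{fig:6j} reads $\iota_l(v)=\sum_{j_{23}}\{\cdots\}_{\xi_n}\,\iota_r^{(j_{23})}(v)$, a \emph{sum} over all admissible $j_{23}$. If you expand both sides in the weight basis of $V^{j_1}\otimes V^{j_2}\otimes V^{j_3}$ and pair with the dual of a single pure tensor $e^{j_1}_u\otimes e^{j_2}_v\otimes e^{j_3}_w$, the internal index $s$ of $V^{j_{23}}$ is determined by the weights ($s=v+w-j_2-j_3+j_{23}$), so \emph{every} admissible $j_{23}$ with $s\in\{0,\dots,n-1\}$ contributes a generically nonzero term to that coefficient; you obtain one linear equation mixing all the $6j$-symbols, not the value at $j_{23}=f$. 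Choosing $(u,v,w)$ extremal only kills the terms on one side of $f$ (those with $s<0$), so the extraction works only when $f$ is the extremal weight $j_2+j_3$ (or, dually, $j_1+f-j=n-1$), not for a general admissible $f$. A symptom of the problem is that your outline has no mechanism producing the prefactor $\qbin{2f+n}{2f+1}^{-1}$ in \eqref{equation:6j}: a pure-tensor coefficient of the defining relation can never generate a theta-graph normalization.

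The paper's proof closes this gap by composing both sides with the projector $Y^{f}_{b,d}$ onto the $V^f$-isotypic component (graphically, closing the two legs into an $f$-colored edge, Figure \ref{figure:another6j}); by Lemma \ref{lem:thetagraph} and \eqref{eq:decomposition} this annihilates all $j_{23}\neq f$ and multiplies the surviving term by $\qbin{2f+n}{2f+1}$, which is where that factor in \eqref{equation:6j} comes from. After this single correction your plan essentially coincides with the paper's computation: one fixes extremal external indices (in the paper $m_1=m_3=0$, forcing $m_2=a+f-c$ and leaving one internal index $\alpha$), substitutes \eqref{equation:QCGC} for the four CGQC factors (using Lemma \ref{lemma:bend} for the projector coefficient), collapses the $\alpha$-sum in closed form via Lemma \ref{lemma:relation}, and then does the $\xi_n$-exponent bookkeeping and range analysis you describe in steps (v)--(vi). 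So the fix is conceptual, not computational: replace ``evaluate against the dual of a pure tensor'' by ``compose with the normalized projector $\qbin{2f+n}{2f+1}^{-1}Y^{f}_{b,d}$'' before expanding in the weight basis.
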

\par\noindent
{\bf Remark.}
These $6j$-symbols were already computed in \cite{GP}.
%
%
%

\subsection{Values of tetrahedra}
The following is a straightforward consequence of the definition of $6j$-symbols and of Lemma \ref{lem:thetagraph}:
\begin{lemma} It holds:
\begin{equation}\raisebox{-1.2cm}{
\psfrag{a}{$a$}\psfrag{b}{$b$}\psfrag{c}{$c$}\psfrag{d}{$d$}\psfrag{e}{$e$}\psfrag{f}{$f$}
\includegraphics[width=1.2cm]{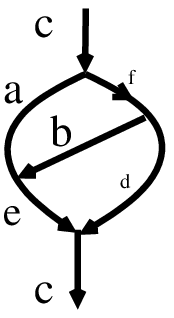}}=\left\{
\begin{matrix}
a&b&e\\
d&c&f
\end{matrix}
\right\}_{\xi_n}\qbin{2f+n}{2f+1} \raisebox{-1.2cm}{
\psfrag{a}{$c$}\psfrag{b}{$f$}\psfrag{c}{$a$}
\includegraphics[width=1.2cm]{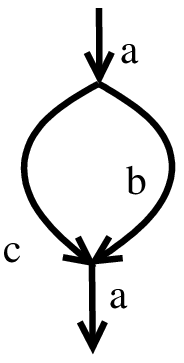}
}
\end{equation}
\end{lemma}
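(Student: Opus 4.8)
The plan is to read the open tetrahedron as exactly the composite that occurs in the definition of the $6j$-symbol (Figure~\ref{fig:6j}), and then to collapse the resulting sum by means of the theta evaluation of Lemma~\ref{lem:thetagraph}. Write $\iota_l=(Y_e^{a,b}\otimes\operatorname{id}_{V^d})\circ Y_c^{e,d}\colon V^c\to V^a\otimes V^b\otimes V^d$ for the left tree of Figure~\ref{fig:6j}, and, for each admissible colour $g$, write $\iota_r^{(g)}=(\operatorname{id}_{V^a}\otimes Y_g^{b,d})\circ Y_c^{a,g}\colon V^c\to V^a\otimes V^b\otimes V^d$ for the corresponding right tree. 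Let $\Psi=Y_{a,f}^c\circ(\operatorname{id}_{V^a}\otimes Y_{b,d}^f)\colon V^a\otimes V^b\otimes V^d\to V^c$ be the right tree at the colour $f$ read backwards; since it is a composition of the already-defined projectors of Proposition~\ref{prop:Yprojectors}, no cup/cap normalisation enters in writing it down. Unwinding the pictures, $\Psi\circ\iota_l\colon V^c\to V^c$ is precisely the tetrahedral network $\bigl\{{}^{a\,b\,e}_{d\,c\,f}\bigr\}_{tet}$ with the edge coloured $c$ cut open, i.e.\ the left-hand side of the asserted identity.

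I would then substitute the defining relation of Figure~\ref{fig:6j},
\[
\iota_l=\sum_{g}\Bigl\{{}^{a\,b\,e}_{d\,c\,g}\Bigr\}_{\xi_n}\,\iota_r^{(g)},
\]
and post-compose with $\Psi$. In the $g$-th summand the middle factor is $Y_{b,d}^f\circ Y_g^{b,d}\colon V^g\to V^b\otimes V^d\to V^f$. By Lemma~\ref{lem:thetagraph}, i.e.\ \eqref{eq:theta} applied to the triple $(f,b,d)$, this equals $\qbin{2f+n}{2f+1}\operatorname{id}_{V^f}$ when $g=f$; when $g\neq f$ it is a morphism between simple modules which are not isomorphic (by Proposition~\ref{prop:decomposition} the colours appearing in the decomposition of $V^b\otimes V^d$ are distinct and differ by less than $n$, hence not by a multiple of $2n$), so it vanishes. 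Hence only the term $g=f$ survives and
\[
\Psi\circ\iota_l=\Bigl\{{}^{a\,b\,e}_{d\,c\,f}\Bigr\}_{\xi_n}\,\qbin{2f+n}{2f+1}\;Y_{a,f}^c\circ Y_c^{a,f}.
\]
Since $Y_{a,f}^c\circ Y_c^{a,f}\colon V^c\to V^a\otimes V^f\to V^c$ is exactly the open theta graph with edges coloured $c,f,a$ that appears on the right-hand side, comparing the two expressions for $\Psi\circ\iota_l$ yields the lemma. One may also, in place of simplicity, feed \eqref{eq:decomposition} into \eqref{eq:theta} to obtain the orthogonality $Y_{b,d}^f\circ Y_g^{b,d}=\delta_{f,g}\,\qbin{2f+n}{2f+1}\operatorname{id}_{V^f}$ directly.

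There is no analytic content here; the one step with genuine substance is the orthogonality that kills all summands but one, and this is already contained in Lemma~\ref{lem:thetagraph} together with simplicity of the $V^{g}$. The work is combinatorial: one must check that $\Psi\circ\iota_l$, once drawn out, really coincides with the picture on the left (which edge is cut, the cyclic order of the three legs at each trivalent vertex, the orientation conventions), and one must keep track of the duality normalisations — recall $(V^a)^*\cong V^{n-1-a}$, so that \emph{bending a leg} costs the scalars of \eqref{eq:p} and Lemma~\ref{lem:nonordered} — so that no spurious power of $\xi_n$ or sign survives and the remaining theta emerges with edge colours $c,f,a$ rather than with some permutation of them.
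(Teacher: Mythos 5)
Your proposal is correct and is essentially the paper's own argument: the paper derives this identity (stated pictorially again as Figure \ref{figure:another6j} in the appendix) exactly by reading the cut-open tetrahedron as the defining relation of the $6j$-symbol from Figure \ref{fig:6j} post-composed with the projector tree, and then collapsing the $(b,d,f)$-bubble via the theta value of Lemma \ref{lem:thetagraph}, with orthogonality (non-isomorphic simples $V^g$ for $g\neq f$) killing all other summands. Your write-up just makes explicit the Schur-type vanishing and the pictorial bookkeeping that the paper dismisses as straightforward.
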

Let us define 
\begin{equation}
\left\{
\begin{matrix}
a&b&e\\
d&c&f
\end{matrix}
\right\}_{tet}
=
\left\{
\begin{matrix}
a&b&e\\
d&c&f
\end{matrix}
\right\}_{\xi_n}\qbin{2f+n}{2f+1}.
\label{equation:6jtet}
\end{equation}
We will prove later in much greater generality (Theorem \ref{teo:invariance}) that $\left\{
\begin{matrix}
a&b&e\\
d&c&f
\end{matrix}
\right\}_{tet}$ is the value of an invariant for the tetrahedron of Figure \ref{fig:tet} and thus it has all the symmetries of the tetrahedron (up to switching the color of an edge with its complement to $n-1$ if the symmetry changes the orientation of the edge).

\begin{figure}[htb]
$$
\includegraphics[scale=0.7]{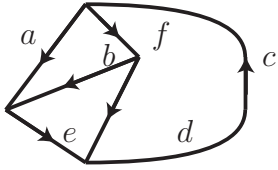}
\hspace{-3.1cm}\raisebox{1.6cm}{$a\qquad\ \ \ \ \  f$}
\hspace{-0.9cm}\raisebox{1.3cm}{$b\quad\qquad\quad\ \   c$}
\hspace{-2.8cm}\raisebox{0.3cm}{$e\quad\quad\quad \, d$}
$$
\caption{Colored oriented tetrahedral graph}
\label{fig:tet}
\end{figure} 
Using formula \eqref{eq:ABC}, if in $\left\{
\begin{matrix}
a&b&e\\
d&c&f
\end{matrix}
\right\}_{tet}$ one fixes the values of $B_{dec},B_{abc},B_{bdf}$ and $B_{afc}$ in $\{0,1,\ldots, n-1\}$ and computes the values of $b,f,c$ as functions of these values and of $e,a,d$, then one gets a rational function of $\xi_n^a,\xi_n^d$ and $\xi_n^e$ with complex coefficients, which is actually a Laurent polynomial of $\xi_n^a$ and $\xi_n^d$. Similarly one can express all the variables in terms of $e,b,c$ or $e,f,c$ showing that the function is a Laurent polynomial also of $\xi_n^b,\xi_n^c,\xi_n^f$. To prove that it is also a Laurent polynomial of $\xi_n^e$,  we exploit the following symmetry (induced by an isotopy rotating the picture of Figure \ref{fig:tet} of $180^{\circ}$ along an axis contained in the blackboard)
$$
\left\{
\begin{matrix}
a & b & e \\
d & c & f
\end{matrix}
\right\}_{tet}
=
\left\{
\begin{matrix}
f & n-1-b & d \\
e & c & a
\end{matrix}
\right\}_{tet}, 
$$ 
and apply the same argument to the right-hand side to conclude that the l.h.s. is a Laurent polynomial with respect to $\xi_n^e$.
Hence the value of the tetrahedron is holomorphic with respect to the parameters $a$, $\cdots$, $f$. This implies that 
$
\left\{
\begin{matrix}
f & b & d \\
e & c & a
\end{matrix}
\right\}_{tet} 
$ 
is well-defined even if some of parameters are half-integers, therefore from now on we will allow half-integers values for the parameters.
So, in order to exploit this, let us set a definition.
\begin{defi}
A triple of three integers $(i,j,k)$ is called {\it admissible} if they satisfy the following conditions:
\begin{multline}
0 < i,\ j,\ k < n-1, \quad
n-1 < i+j+k < 2\, (n-1), 
\\
0 < i+j-k,\ j+k-i,\ k+i-j < n-1.
\end{multline}
(here $n$ is such that $\xi=exp(\frac{i\pi}{n})$). 
\end{defi}
The following lemma will be crucial in the proof of Theorem \ref{th:trunc}:
\begin{lemma}\label{lemma:trunc0}
Let $a$, $b$, $c$, $d$, $e$, $f$ be integers such that $(a, b, e)$, $(a, c, f)$, $(b, d, f)$, $(c, d, e)$ are admissible triples.  
Then it holds:
\begin{multline}
\left\{
\begin{matrix}
a & b & e \\
d & c & f 
\end{matrix}\right\}_{tet}
=
(-1)^{n-1} \, 
\dfrac{\{B_{dec}\}! \, \{B_{abe}\}!}
{\{B_{bdf}\}! \, \{B_{afc}\}!} \, 
\qbin{2e}{A_{abe}+1-n} \, 
{\qbin{2e}{B_{ecd}}}^{-1} \,
\\
\sum_{z = m}^{M}\,
\qbin{A_{afc}+1-n}{2c+z+1-n} \, 
\qbin{B_{acf}+z}{B_{acf}} \,
\qbin{B_{bfd}+B_{dec}-z}{B_{bfd}} \,
\qbin{B_{dce}+z}{B_{dfb}},        
\label{equation:trunc0}
\end{multline}
where 
$$
\begin{aligned}
m &= \max(0,\ n-1-2c,\  b-c+e+f-n+1,\  -b-c+e+f), \\
M &= \min(B_{dec},\ B_{afc},\  n-1-B_{acf},\  n-1-B_{dce}).
\end{aligned}
$$  
Let
$$
R(z)
=
\qbin{A_{afc}+1-n}{2c+z+1-n} \, 
\qbin{B_{acf}+z}{B_{acf}} \,
\qbin{B_{bfd}+B_{dec}-z}{B_{bfd}} \,
\qbin{B_{dce}+z}{B_{dfb}}        
$$
for $m < z < M$.  
Then $R(z)$ is positive, and there is a unique integer
$z_0$ satisfies $r(z_0) \geq 1$ and $r(z_0+1) \leq 1$ 
for
\begin{equation*}
r(z) = \dfrac{R(z)}{R(z-1)}
=
\dfrac{\{B_{afc}-z+1\} \, \{B_{acf}+z\}\,
\{B_{dec}-z+1\} \, \{B_{dce}+z\}}
{\{2c+z+1-n\}  \{z\} 
 \{ B_{bfd}+B_{dec}-z+1\} 
\{B_{dce} - B_{dfb} + z\} }.  
\end{equation*}
This $z_0$ satisfies
\begin{equation}
R(z_0)
=
\max\{R(z) \mid z\in \bZ,\  m < z < M\}
.
\label{equation:Rmax}
\end{equation}
\end{lemma}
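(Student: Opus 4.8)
The plan is to establish the statement in three stages: first the formula \eqref{equation:trunc0} with the corrected summation range, then the positivity of $R(z)$ on $m<z<M$ together with the monotonicity/unimodality encoded by $r(z)$, and finally the identification of $z_0$ as the location of the maximum.

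\emph{Step 1: deriving \eqref{equation:trunc0} from \eqref{equation:6j}.} Starting from Theorem \ref{theo:6j} and the definition \eqref{equation:6jtet} of $\{\cdot\}_{tet}$, the factor ${\qbin{2f+n}{2f+1}}^{-1}$ cancels against the $\qbin{2f+n}{2f+1}$ from \eqref{equation:6jtet}. The remaining task is to rewrite the three binomial coefficients inside the sum that involve $n$ in a ``shifted'' form. The key tool is the third bulleted identity $\qbin{x}{y}=\qbin{n-1-y}{n-1-x}$: applying it to $\qbin{A_{afc}+1}{2c+z+1}$ turns it into $\qbin{A_{afc}+1-n}{2c+z+1-n}$ (here one must check, using admissibility of $(a,c,f)$, that the arguments land in the correct range $\{0,\dots,n-1\}$), and similarly one has to recognize that the sign $(-1)^{B_{afc}}$ and the terms $(-1)^z$ reorganize into the overall $(-1)^{n-1}$ prefactor after this substitution, exploiting $\qbin{a}{b}=(-1)^{a-b}\qbin{a-n}{b-n}$ from the fourth bullet. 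Tracking the constraint that each binomial coefficient be nonzero (i.e.\ that both arguments and their difference lie in $\{0,\dots,n-1\}$) is precisely what forces the four-way $\max$ in $m$ and four-way $\min$ in $M$; this bookkeeping, carried out under the admissibility hypotheses, is the bulk of Step 1.

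\emph{Step 2: positivity and the ratio $r(z)$.} For $m<z<M$ each of the four binomial coefficients in $R(z)$ has both arguments in $\{0,\dots,n-1\}$ by the definition of $m,M$; since $\xi_n=\exp(\pi\sqrt{-1}/n)$ one has $\{j\}=2\sqrt{-1}\sin(\pi j/n)>0\cdot\sqrt{-1}$... more precisely $\{j\}/\sqrt{-1}=2\sin(\pi j/n)>0$ for $1\le j\le n-1$, so each $\qbin{\cdot}{\cdot}$ appearing in $R(z)$ is a ratio of products of $\{j\}$'s with $1\le j\le n-1$ and is therefore a positive real number (the powers of $\sqrt{-1}$ cancel because numerator and denominator have the same number of factors). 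Hence $R(z)>0$. Then $r(z)=R(z)/R(z-1)$ is a ratio of positive reals; the explicit formula for $r(z)$ in terms of the $\{\cdot\}$'s follows by cancelling the common factors in the four binomial coefficients when $z$ is replaced by $z-1$, a direct computation using $\{k\}!/\{k-1\}!=\{k\}$ and $\{a,a-k\}$-type telescoping.

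\emph{Step 3: unimodality and the maximum.} Write $r(z)=\prod_i 2\sin(\pi\alpha_i(z)/n)\big/\prod_j 2\sin(\pi\beta_j(z)/n)$ from the displayed formula. As $z$ increases by $1$, each numerator argument $B_{afc}-z+1$, $B_{dec}-z+1$ decreases and $B_{acf}+z$, $B_{dce}+z$ increases, while the denominator arguments $2c+z+1-n$, $z$, $B_{dce}-B_{dfb}+z$ increase and $B_{bfd}+B_{dec}-z+1$ decreases; one checks that over the relevant range all these arguments stay in $(0,n)$ where $\sin(\pi t/n)$ is unimodal, and that the logarithmic derivative $\log r(z+1)-\log r(z)$ is monotone decreasing in $z$ (sum of differences of concave/convex pieces). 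Consequently $r$ is (weakly) decreasing in $z$: once $r(z)<1$ it stays $<1$, so there is a unique $z_0$ with $r(z_0)\ge 1>r(z_0+1)$ (or the appropriate boundary behavior), and since $r(z)=R(z)/R(z-1)$ this $z_0$ is exactly where $R$ switches from nondecreasing to decreasing, giving $R(z_0)=\max_{m<z<M}R(z)$.

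The main obstacle I anticipate is Step 1: correctly matching the four nonvanishing-conditions on the binomial coefficients after the $n$-shift against the stated $m$ and $M$, and verifying that the sign factors collapse to a single $(-1)^{n-1}$; these are the kind of index manipulations that are easy to get off by one. Step 3 is conceptually routine once one knows $\sin(\pi t/n)$ is log-concave on $(0,n)$, but writing the monotonicity of $\log r$ cleanly also requires care with which arguments move in which direction.
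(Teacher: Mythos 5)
Your Steps 1 and 2 coincide with the paper's argument: the shift identity $(-1)^z\qbin{A_{afc}+1}{2c+z+1}=(-1)^{B_{afc}}\qbin{A_{afc}+1-n}{2c+z+1-n}$ absorbs the signs of \eqref{equation:6j} into the single factor $(-1)^{n-1}$, the range $[m,M]$ is exactly the set of $z$ for which no binomial vanishes, and positivity of the four binomials on that range gives $R(z)>0$. So far this is fine.

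The genuine gap is in Step 3, which is the heart of the lemma. You assert that ``the logarithmic derivative $\log r(z+1)-\log r(z)$ is monotone decreasing in $z$'' and conclude ``consequently $r$ is (weakly) decreasing''. That is a non sequitur: decrease of the successive differences of $\log r$ is concavity of $\log r$, which does not imply that $r$ decreases, and what your argument actually requires is precisely that $r$ is \emph{strictly} decreasing on $]m,M]$ (equivalently, that $R$ is strictly log-concave) — with only weak decrease a plateau $r\equiv 1$ would already destroy the uniqueness of $z_0$ claimed in the statement. Moreover, this monotonicity is not a routine consequence of unimodality or log-concavity of $\sin(\pi t/n)$ on $(0,n)$: in $r(z)$ two numerator arguments increase with $z$ and two decrease, and the same happens in the denominator, so no term-by-term reasoning works. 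The paper proves it by treating $z$ as a real variable, writing $r'(z)=\frac{\pi}{n}\,r(z)\,f(z)$ with $f(z)$ a signed sum of eight cotangents, and grouping them into four specific pairs, each negative by one of two elementary facts: $\ctg x-\ctg y<0$ for $0<y<x<\pi$, and $-\ctg x-\ctg y<0$ for $0<x,y<\pi$ with $x+y<\pi$. The second inequality is exactly where the admissibility hypotheses enter (e.g.\ $A_{afc}<2(n-1)$ for the pair coming from $\{B_{afc}-z+1\}$ and $\{2c+z+1-n\}$, and $B_{bfd}>0$ for the pair coming from $\{B_{dec}-z+1\}$ and $\{B_{bfd}+B_{dec}-z+1\}$), and the bounds defining $m$ and $M$ are needed to keep all arguments in $(0,\pi)$. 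Your plan neither identifies these pairings nor the role of admissibility, so the strict monotonicity of $r$, hence the uniqueness of $z_0$ and \eqref{equation:Rmax}, remain unproven; you would also need the boundary check (done in the paper) that $r>1$ near $m$ and $r<1$ near $M$ to guarantee that the crossing, and hence $z_0$, exists inside the range rather than at an endpoint.
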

\begin{proof}
The formula \eqref{equation:trunc0} comes from \eqref{equation:6j}, \eqref{equation:6jtet} and the following relation
$$(-1)^z\,\qbin{A_{afc}+1}{2c+z+1}=
(-1)^{B_{afc}} \, \qbin{A_{afc}+1-n}{2c+z+1-n}.
$$ 
Observe that since the colors are integers the summation range $[m,M]\subset [0,n-1]$ is the set of values of $z$ in formula \ref{equation:6j} such that all the binomials are non-zero. Moreover, for $z$ satisfying $m \leq z \leq M$, the four binomials in $R(z)$ are all positive.  
\par
For $z\in ]m,M]$ we rewrite $r(z)$ as follows:
\begin{multline}
r(z)=\frac{\sin(\frac{\pi}{n}(B_{afc}-z+1))\sin(\frac{\pi}{n}(B_{acf}+z))}{\sin(\frac{\pi}{n}(2c+z+1-n))\sin(\frac{\pi}{n}z)} \times \\
\times \frac{\sin(\frac{\pi}{n}(B_{dec}-z+1))\sin(\frac{\pi}{n}(B_{dce}+z))} {\sin(\frac{\pi}{n}(z-(B_{bfd}+B_{dec}-n+1)))\sin(\frac{\pi}{n}(B_{dce} - B_{dfb} + z))}
\end{multline}

Observe also that, since all colors are integers, $r(z)$ is continuous and real-valued for $z\in ]m,M]$ and since $r(z)>1$ if $z$ is close to $m$ and $0 < r(z)<1$ if $z$ is close to $M$, there exist a solution $x_1\in [m,M]$ of $r(z)=1$; thus $R(z)$ attains a local maximum at $z_0=\lfloor x_1\rfloor$.  
We will now show that $x_1$ is actually the only maximum of $R$ on the interval $[m,M]$ and this will conclude the proof. To prove this, we now show that there is only one solution to $r(z)=1$ in $]m,M]$ by proving that $r'(z)<0 \ \forall z\in ]m,M]$. It holds $r'(z)=\frac{\pi}{n} r(z)f(z)$ where:
\begin{multline}
f(z)=-\ctg(\frac{\pi}{n}(B_{afc}-z+1))-\ctg(\frac{\pi}{n}(z-(n-1-2c)))-\ctg(\frac{\pi}{n}(B_{dec}-z+1))+\\
-\ctg(\frac{\pi}{n}( z-(B_{bfd}+B_{dec}-n+1)))+\ctg(\frac{\pi}{n}(B_{acf}+z))-\ctg(\frac{\pi}{n}z)+\\
+\ctg(\frac{\pi}{n}(B_{dce}+z))-\ctg(\frac{\pi}{n}(B_{dce} - B_{dfb} + z))
\end{multline}
To conclude it is then sufficient to observe that for $z\in ]m,M]$ the following inequalities hold: 
\begin{align}
-\ctg(\frac{\pi}{n}(B_{afc}-z+1))-\ctg(\frac{\pi}{n}(z-(n-1-2c)))<0\\
-\ctg(\frac{\pi}{n}(B_{dec}-z+1))-\ctg(\frac{\pi}{n}( z-(B_{bfd}+B_{dec}-n+1)))<0\\
\ctg(\frac{\pi}{n}(B_{acf}+z))-\ctg(\frac{\pi}{n}z)<0\\
\ctg(\frac{\pi}{n}(B_{dce}+z))-\ctg(\frac{\pi}{n}(B_{dce} - B_{dfb} + z))<0
\end{align}
Indeed the latter two hold because for all $0<y<x<\pi$ it holds $ctg(x)-ctg(y)<0$, and the former two because for all $0<x,y<\pi$ such that $x+y<\pi$ it holds $-ctg(x)-ctg(y)<0$. Thus $f(z)<0$ and so $r'(z)<0,\ \forall z\in ]m,M[$.
This proves that $z_0=\lfloor x_1\rfloor$ is the only maximum of $R(z)$ on $[m,M]$.
\end{proof}
\par
\bigskip

\subsection{Relations satisfied by the tetrahedra}
The proof of the relations is similar to that of the analogous relations in the generic $q$ case; the symmetries are a consequence of Theorem \ref{teo:invariance}.
\par
{\bf Orthogonality relation:}
$$\sum_f
\qbin{2f + n}{2f+1}^{-1} \, 
\qbin{2g + n}{2g+1}^{-1} \, 
\left\{
\begin{matrix}
a & b & e \\
d & c & f
\end{matrix}
\right\}_{tet} \,
\left\{
\begin{matrix}
d & b & f \\
a & c & g
\end{matrix}
\right\}_{tet}
=
\delta_{eg}, 
$$
where $f$ ranges over all the complex numbers such that both $b+d-f$ and $f+a-c$ are in $\{0,1,\ldots, n-1\}$. 
\par
{\bf Pentagon relation:}
\begin{multline}
\sum_h
\qbin{2h+n}{2h+1}^{-1} \, 
\left\{
\begin{matrix}
a & b & f \\
g & c &h
\end{matrix}
\right\}_{tet} \,
\left\{
\begin{matrix}
a & h & g \\
e & d & i
\end{matrix}
\right\}_{tet} \,
\left\{
\begin{matrix}
b & c & h \\
d & i & j
\end{matrix}
\right\}_{tet} 
\\
=
\left\{
\begin{matrix}
f & c & g \\
d & e & j
\end{matrix}
\right\}_{tet} \,
\left\{
\begin{matrix}
a & b & f \\
j & e & i
\end{matrix}
\right\}_{tet}.  
\end{multline}
where $h$ ranges over all the complex numbers such that all of $h+a-c$, $g+b-h
$, $e+h-i$, $a+h-g$, $b+c-h$, and $h+d-i$ are in $\{0,1,\ldots, n-1\}$. 
\par
{\bf Symmetry:}
Since the change of the orientation of an edge colored by $i$ corresponds to the change of the color $i$ to $\overline i = n-1-i$, 
we have a group of $24$ symmetries (induced by the invariance under isotopy) and (for instance) generated by the first $3$ here below (we also specify some other elements of the group which will be used later):  
\begin{multline}
\left\{
\begin{matrix}
a & b & e \\
d & c & f
\end{matrix}
\right\}_{tet} 
=
\left\{
\begin{matrix}
b & \overline e & \overline a \\
c & f & d
\end{matrix}
\right\}_{tet} 
=
\left\{
\begin{matrix}
f & \overline b & d \\
e & c & a
\end{matrix}
\right\}_{tet} 
=
\left\{
\begin{matrix}
\overline d & \overline b & \overline f \\
\overline a & \overline c & \overline e
\end{matrix}
\right\}_{tet} 
=
\\
\left\{
\begin{matrix}
c & \overline f & a \\
b & e & \overline d
\end{matrix}
\right\}_{tet} 
=
\left\{
\begin{matrix}
d & e & c \\
\overline a & f & b
\end{matrix}
\right\}_{tet} 
=
\left\{
\begin{matrix}
e & d & c \\
\overline f & a & \overline b
\end{matrix}
\right\}_{tet} 
.  
\label{equation:symmetry}
\end{multline}
\section{Relation between the  $6j$-symbol and the hyperbolic volume}
\setcounter{equation}{0}
In this section, we investigate the relation between 
$
\left\{
\begin{matrix}
a & b & e \\
d & c & f
\end{matrix}
\right\}_{tet}
$ 
and the hyperbolic volume of an ideal or truncated hyperbolic tetrahedron.  
\subsection{Volume of an ideal tetrahedron}
First, we consider an ideal tetrahedron $T$ with dihedral angles $\alpha$, $\beta$ and $\gamma$ satisfying $a+\beta+\gamma=\pi$.  
Let $a_n,b_n,c_n$ be sequences of integers such that $\lim_{n\to \infty} \frac{2\pi a_n}{n}=\pi-\alpha$, $\lim_{n\to \infty} \frac{2\pi b_n}{n}=\pi-\beta$, $\lim_{n\to \infty} \frac{2\pi c_n}{n}=\pi-\gamma$ and $a_n+b_n+c_n=n-1$.
Then we have
\begin{multline*}
\left\{
\begin{matrix}
a_n & b_n & c_n \\
a_n & b_n & c_n
\end{matrix}
\right\}_{tet}
=
{\qbin{2c_n}{n-1-2a_n}}^{-1}
\sum_{z = \max(0, 2(c_n-b_n))}^{n-1-2b_n}
\\
(-1)^z 
\qbin{n}{\!\!
2b_n+z+1 \!\!
} 
\qbin{\!\!
n-1-2c_n+z \!\!
}{n-1-2c_n}
\qbin{2c_n-z}{\!\!
n-1-2a_n \!\!
} 
\qbin{\!\!
n-1-2c_n+z \!\!
}{n-1-2b_n}.    
\end{multline*}
The summand does not vanish only when $z = n-1-2b_n$ and, using $\{n-1\}! = \sqrt{-1}^{\, n-1}\,n$, we get
\begin{equation*}
\begin{aligned}
&\left\{
\begin{matrix}
a_n & b_n & c_n \\
a_n & b_n &c_n
\end{matrix}
\right\}_{tet}
\!\!
=
(-1)^{n-1} \, 
{\qbin{2c_n}{n-1-2a_n}}^{-1} \,
\qbin{2a_n}{n-1-2c_n} \,
\qbin{2a_n}{n-1-2b_n}
\\
&=
\dfrac{1}{n^2} \, 
\{2a_n\}!\, \{2b_n\}!\, \{2c_n\}!
\\
&=
\dfrac{(-1)^{n-1}}{n^2} \, 
\left(
\prod_{k=1}^{2a_n} 2\, \sin \frac{k\, \pi}{n}
\right) \, 
\left(
\prod_{k=1}^{2b_n} 2\, \sin \frac{k\, \pi}{n}\right) \, 
\left(\prod_{k=1}^{2c_n} 2\, \sin \frac{k\, \pi}{n}\right) \, 
.  
\end{aligned}
\end{equation*}
\begin{teo}\label{theorem:tetasympt}
The volume of the ideal tetrahedron $T$ with dihedral angles $\alpha$, $\beta$, $\gamma$ is given as follows. 
\begin{equation*}
\operatorname{Vol}(T) = 
\lim_{n\to\infty}
\dfrac{\pi}{n} \, \log\left( (-1)^{n-1} 
\left\{
\begin{matrix}
a_n & b_n & c_n \\
a_n & b_n & c_n
\end{matrix}
\right\}_{tet}\right).
\end{equation*}
Moreover, letting $\overline{a}_n=n-1-a_n, \overline{b}_n=n-1-b_n, \overline{c}_n=n-1-c_n$, it also holds:
\begin{equation*}
\operatorname{Vol}(T) = 
\lim_{n\to\infty}
\dfrac{\pi}{n} \, \log\left( (-1)^{n-1} 
\left\{
\begin{matrix}
\overline{a}_n & \overline{b}_n & \overline{c}_n \\
\overline{a}_n & \overline{b}_n & \overline{c}_n
\end{matrix}
\right\}_{tet}\right).
\end{equation*}
\label{th:ideal}
\end{teo}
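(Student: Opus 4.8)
The plan is to evaluate the asymptotics of the expression explicitly, using the closed form already computed just above the statement. From the calculation preceding the theorem we have
\[
\left\{
\begin{matrix}
a_n & b_n & c_n \\
a_n & b_n & c_n
\end{matrix}
\right\}_{tet}
=
\frac{(-1)^{n-1}}{n^2}\,
\left(\prod_{k=1}^{2a_n} 2\sin\frac{k\pi}{n}\right)
\left(\prod_{k=1}^{2b_n} 2\sin\frac{k\pi}{n}\right)
\left(\prod_{k=1}^{2c_n} 2\sin\frac{k\pi}{n}\right),
\]
so the whole problem reduces to understanding $\frac{\pi}{n}\log\big(\frac1{n^2}\prod_{k=1}^{2m_n}2\sin\frac{k\pi}{n}\big)$ for $m_n\in\{a_n,b_n,c_n\}$ with $\frac{2\pi m_n}{n}\to\pi-\theta$ (where $\theta\in\{\alpha,\beta,\gamma\}$).

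First I would recall the Lobachevsky function $\Lambda(\phi)=-\int_0^\phi\log|2\sin t|\,dt$ and the classical fact that the volume of an ideal tetrahedron with dihedral angles $\alpha,\beta,\gamma$ (with $\alpha+\beta+\gamma=\pi$) equals $\Lambda(\alpha)+\Lambda(\beta)+\Lambda(\gamma)$. Then I would show that
\[
\lim_{n\to\infty}\frac{\pi}{n}\log\left(\prod_{k=1}^{2m_n}2\sin\frac{k\pi}{n}\right)
=\lim_{n\to\infty}\frac{\pi}{n}\sum_{k=1}^{2m_n}\log\left(2\sin\frac{k\pi}{n}\right)
=-\Lambda\!\left(\frac{2\pi m_\infty}{n}\cdot\tfrac12\right)\cdot 2,
\]
more precisely that this Riemann-type sum converges to $-\Lambda(\pi-\theta)=\Lambda(\theta)$ (using $\Lambda(\pi-\theta)=-\Lambda(\theta)$ and that $\Lambda$ is odd and $\pi$-periodic). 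The point is that $\frac{\pi}{n}\sum_{k=1}^{2m_n}\log\,|2\sin\frac{k\pi}{n}|$ is a Riemann sum for $\int_0^{(\pi-\theta)}\log|2\sin t|\,dt = -\Lambda(\pi-\theta)=\Lambda(\theta)$, the factor $\frac{\pi}{n}$ being the mesh and $\frac{k\pi}{n}$ ranging over $(0,\pi-\theta]$. The term $\frac1{n^2}$ contributes $\frac{\pi}{n}\log\frac1{n^2}=-\frac{2\pi\log n}{n}\to 0$, and the sign $(-1)^{n-1}$ is absorbed by the $(-1)^{n-1}$ inside the logarithm in the statement. Summing the three contributions gives $\Lambda(\alpha)+\Lambda(\beta)+\Lambda(\gamma)=\operatorname{Vol}(T)$, proving the first formula.

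For the second formula, observe that $\overline{a}_n=n-1-a_n$ satisfies $\frac{2\pi\overline a_n}{n}\to 2\pi-(\pi-\alpha)=\pi+\alpha$, but we cannot directly reuse the previous computation because $(\overline a_n,\overline b_n,\overline c_n)$ does \emph{not} satisfy $\overline a_n+\overline b_n+\overline c_n=n-1$; instead $\overline a_n+\overline b_n+\overline c_n=2(n-1)$. So I would instead recompute the closed form of $\big\{\begin{smallmatrix}\overline a_n&\overline b_n&\overline c_n\\ \overline a_n&\overline b_n&\overline c_n\end{smallmatrix}\big\}_{tet}$ from \eqref{equation:6j}–\eqref{equation:6jtet} directly, exactly as was done for the untilded version: again exactly one summand survives, and using $\{k\}=\{n-k\}$ together with $\qbin{a}{b}=\qbin{n-1-b}{n-1-a}$ one finds the value is, up to sign, $\frac1{n^2}\prod_{k=1}^{2\overline a_n}2\sin\frac{k\pi}{n}\cdot(\cdots)$ — or, more slickly, apply the symmetry $\qbin{a}{b}=(-1)^{a-b}\qbin{a-n}{b-n}$ and $\{a\}=\{n-a\}$ to relate the tilded product of sines back to the untilded one. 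Concretely, $\prod_{k=1}^{2\overline m_n}2\sin\frac{k\pi}{n}=\prod_{k=1}^{2(n-1-m_n)}2\sin\frac{k\pi}{n}$; writing $2(n-1-m_n)=2n-2-2m_n$ and using periodicity $\sin\frac{(k+n)\pi}{n}=-\sin\frac{k\pi}{n}$ together with $\sin\frac{(n-j)\pi}{n}=\sin\frac{j\pi}{n}$, one sees this product differs from $\prod_{k=1}^{2m_n}2\sin\frac{k\pi}{n}$ only by a bounded factor and a sign, so the same Riemann-sum limit applies and yields $\Lambda(\alpha)+\Lambda(\beta)+\Lambda(\gamma)$ once more.

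The main obstacle is the second formula: the tilded triple is \emph{not} admissible in the sense needed for the first computation (the sum $\overline a_n+\overline b_n+\overline c_n$ equals $2(n-1)$, not $n-1$), so one must either redo the $6j$-symbol evaluation from scratch in this regime, checking that again a single term survives and tracking all the sign and $\{n-1\}!$ normalizations, or carefully justify the periodicity/symmetry manipulations that reduce the tilded product of sines to the untilded one modulo a subexponential factor. Either route is elementary but bookkeeping-heavy; the Riemann-sum convergence itself (first formula) is routine, once one notes the integrand $\log|2\sin t|$ is integrable near $t=0$ so the sum's first few terms, where $\frac{k\pi}{n}$ is small, do not spoil convergence — a standard estimate shows $\frac{\pi}{n}\sum_{k=1}^{K}\log|2\sin\frac{k\pi}{n}|\to\int_0^{K\pi/n\to\,\cdot}$ even with the mild singularity.
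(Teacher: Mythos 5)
Your proof of the first formula is correct and is essentially the paper's own argument: one starts from the closed form $\frac{(-1)^{n-1}}{n^2}\prod_{k=1}^{2a_n}2\sin\frac{k\pi}{n}\,\prod_{k=1}^{2b_n}2\sin\frac{k\pi}{n}\,\prod_{k=1}^{2c_n}2\sin\frac{k\pi}{n}$ computed just before the statement, notes that $\frac{\pi}{n}\log n^{-2}\to 0$, and identifies each $\frac{\pi}{n}\sum_{k=1}^{2a_n}\log\bigl(2\sin\frac{k\pi}{n}\bigr)$ as a Riemann sum for $\int_0^{\pi-\alpha}\log|2\sin t|\,dt=-\Lambda(\pi-\alpha)=\Lambda(\alpha)$, so the limit is $\Lambda(\alpha)+\Lambda(\beta)+\Lambda(\gamma)=\operatorname{Vol}(T)$.

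The second formula is where you have a genuine gap. Since $\frac{2\pi a_n}{n}\to\pi-\alpha$ with $\alpha>0$, for large $n$ one has $2\overline{a}_n=2n-2-2a_n>n$, so the product $\prod_{k=1}^{2\overline{a}_n}2\sin\frac{k\pi}{n}$ contains the factor $2\sin\pi=0$ and vanishes identically: it does \emph{not} differ from $\prod_{k=1}^{2a_n}2\sin\frac{k\pi}{n}$ ``by a bounded factor and a sign'', and $\frac1{n^2}\{2\overline{a}_n\}!\,\{2\overline{b}_n\}!\,\{2\overline{c}_n\}!$ is not the value of the barred symbol. Even after deleting the zero factor and using $\sin\frac{(n-j)\pi}{n}=\sin\frac{j\pi}{n}$, the surviving product has exponential rate $\int_0^{\alpha}\log(2\sin t)\,dt=-\Lambda(\alpha)$ per edge, i.e.\ your manipulation would output $-\operatorname{Vol}(T)$, so this route cannot be repaired. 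Your fallback of re-evaluating \eqref{equation:6j} with barred entries is also not the advertised routine bookkeeping: because $2\overline{c}_n\geq n$, the prefactor $\qbin{2e}{A_{abe}+1-n}=\qbin{2\overline{c}_n}{n-1}$ vanishes (its numerator contains $\{n\}=0$) while $\qbin{2e}{B_{ecd}}=\qbin{2\overline{c}_n}{2a_n}$ vanishes as well and appears inverted, so the formula degenerates to a $0\cdot\infty$ expression at these boundary (non-admissible) colors and must be handled by continuity in the colors or by a different form of the symbol. The paper instead settles the second formula in one line, which is the idea you are missing: replacing every color by its complement $n-1-{}\cdot{}$ corresponds to reversing the orientation of every edge of the tetrahedral graph of Figure \ref{fig:tet}, and the reversed graph is isotopic to the original, so by the invariance of Theorem \ref{teo:invariance} (equivalently, the symmetry $\left\{\begin{smallmatrix}a&b&e\\ d&c&f\end{smallmatrix}\right\}_{tet}=\left\{\begin{smallmatrix}\overline d&\overline b&\overline f\\ \overline a&\overline c&\overline e\end{smallmatrix}\right\}_{tet}$ of \eqref{equation:symmetry} specialized to $d=a$, $c=b$, $f=e$) the barred and unbarred symbols are equal for every $n$; the second limit is then literally the first.
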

\begin{proof}{
The limit is computed as follows.  
$$
\begin{aligned}
&
\lim_{n\to\infty}
\dfrac{ \pi}{n} \, \log (-1)^{n-1}
\left\{
\begin{matrix}
a_n & b_n & c_n \\
a_n & b_n & c_n
\end{matrix}
\right\}_{tet}
\\&
=
\lim_{n\to\infty}
\dfrac{\pi}{n} \, \log 
\dfrac{1}{n^2} \, \left(
\prod_{k=1}^{2a_n} 2\, \sin \frac{k\, \pi}{n} \, 
\prod_{k=1}^{2b_n} 2\, \sin \frac{k\, \pi}{n} \, 
\prod_{k=1}^{2c_n} 2\, \sin \frac{k\, \pi}{n} 
\right) 
\\
&=
\lim_{n\to\infty}
\dfrac{\pi}{n} \,
\left(\!\!
-2 \log n + \sum_{k=1}^{2a_n} \log ( 2\, \sin \frac{k\, \pi}{n})
+\sum_{k=1}^{2b_n} \log ( 2\, \sin \frac{k\, \pi}{n})
+\sum_{k=1}^{2c_n} \log ( 2\, \sin \frac{k\, \pi}{n})
\!\!
\right)
\\
&
=
\Lambda(\alpha) + \Lambda(\beta) + \Lambda(\gamma),
\end{aligned}
$$
which is equal to the volume of $T$.  
Here we use the Lobachevski function $\Lambda(x) = -\displaystyle\int_0^x \log(2\, |\sin x|) \, dx$,  its relation $\Lambda(\pi -x) = -\Lambda(x)$, and
\begin{equation}
\lim_{n\to\infty}
\dfrac{\pi}{n} \, 
\sum_{k=1}^{2a_n} \log( 2\, \sin \frac{k\, \pi}{n})
=
\int_0^{a} \log(|2\, \sin t|) \, dt 
\label{eq:limlog}
\end{equation}
for $0 < a < \pi$ and a sequence of integers $a_n$ such that $\lim_{a \to \infty} \frac{2 \pi a_n }{n} = a$.  
The last statement is a direct consequence of the fact that changing the coloring from $a_n,b_n,c_n$ to $\overline{a}_n,\overline{b}_n,\overline{c}_n$ is equivalent to switching all the orientations of the edges of the tetrahedron in Figure \ref{fig:tet} without changing the labels, and the resulting graph is isotopic to the initial one, thus the invariants are equal for each $n$. 
}
\end{proof}
\subsection{Volume of a truncated tetrahedron}
Let $T$ be a truncated hyperbolic tetrahedron as in Figure \ref{figure:truncated},   
which has four right-angled hexagons and four triangles.  
Let $\theta_a$, $\theta_b $, $\theta_c$, $\theta_d$, $\theta_e$, $\theta_f$ be the dihedral angles at the edges $a$, $b$, $\cdots$, $f$ of $T$.  
Other dihedral angles of $T$ are all right angles.  
The shape of $T$ is uniquely determined by the angles $\theta_a$, $\theta_b $, $\cdots$, $\theta_f$.  
For precise definition of a truncated hyperbolic tetrahedron, see Definition 3.1 in \cite{U}.  
\begin{figure}[htb]
$$
\includegraphics[scale=0.4]{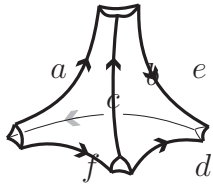}
\raisebox{13mm}{\hspace{-21mm}$a\qquad\qquad e$}
\raisebox{12mm}{\hspace{-8mm}$b$}
\raisebox{9mm}{\hspace{-7mm}${c}$}
\raisebox{0mm}{\hspace{-5mm}$f \qquad\quad d$}
$$
\caption{A truncated hyperbolic tetrahedron $T$.}
\label{figure:truncated}
\end{figure}
\begin{teo}
Let $T$ be the truncated tetrahedron with oriented labeled edges as in Figure \ref{figure:truncated}, and  
let $0 < \theta_a$, $\theta_b $, $\theta_c$, $\theta_d$, $\theta_e$, $\theta_f < \pi$ be the internal dihedral angles at the edges.  
If $\theta_i$, $\theta_j$, $\theta_k$ are three dihedral angles meeting at the same vertex, then they satisfy
$\theta_i + \theta_j + \theta_k < \pi$ since $T$ is a truncated tetrahedron.  
Let $a_n,b_n,c_n,d_n,e_n,f_n$ be sequences of integers such that $\lim_{n\to \infty} \frac{2\pi a_n}{n}=\pi-\theta_a,\ldots, \lim_{n\to \infty} \frac{2\pi f_n}{n}=\pi-\theta_f,$.
Put $\overline{a_n} = n-1-a_n$, $\cdots$, 
$\overline{f_n} = n-1-f_n$.  
Using these parameters, the volume of \,$T$ is given as follows.  
$$
\operatorname{Vol}(T) = 
\lim_{n\to\infty}
\,
\dfrac{\pi}{2\, n}\,
 \log 
 \left(
\left\{
\begin{matrix}
a_n & b_n & e_n \\
d_n & c_n & f_n
\end{matrix}
\right\}_{tet}\,\left\{
\begin{matrix}
\overline a_n & \overline b_n & \overline e_n \\
\overline d_n & \overline c_n & \overline f_n
\end{matrix}
\right\}_{tet}
\right).  
$$
\label{th:trunc}
\end{teo}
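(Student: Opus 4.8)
The plan is to follow the same strategy that works for the ideal tetrahedron in Theorem~\ref{th:ideal}, upgraded to handle the genuinely exponential (rather than collapsing) sum. The starting point is Lemma~\ref{lemma:trunc0}: the product $\left\{\begin{smallmatrix}a_n&b_n&e_n\\ d_n&c_n&f_n\end{smallmatrix}\right\}_{tet}\left\{\begin{smallmatrix}\overline a_n&\overline b_n&\overline e_n\\ \overline d_n&\overline c_n&\overline f_n\end{smallmatrix}\right\}_{tet}$ is (up to sign, which disappears under the product of the symbol with its ``bar'') a product of two finite sums of \emph{positive} terms $R(z)$, each term being a product of quantum binomials. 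The first step is a purely asymptotic estimate: by the integral formula \eqref{eq:limlog} and $\Lambda(\pi-x)=-\Lambda(x)$, for any sequence $x_n$ of integers with $\frac{2\pi x_n}{n}\to y$ one has $\frac{\pi}{n}\log\{x_n\}! \to \int_0^y\log|2\sin t|\,dt$, hence $\frac{\pi}{n}\log\qbin{x_n}{w_n}\to$ a corresponding combination of such integrals (a ``quantum dilogarithm'' type function $\Lambda$-expression). Applying this termwise to $R(z)$, where $z=z_n$ is chosen so that $\frac{2\pi z_n}{n}\to\zeta$, gives a continuous ``potential'' function $V(\zeta;\theta_a,\ldots,\theta_f)$ such that $\frac{\pi}{n}\log R(z_n)\to V(\zeta)$. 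Because all summands are positive and there are only $O(n)$ of them, $\frac{\pi}{n}\log\left(\sum_z R(z)\right)\to \max_\zeta V(\zeta)$; this is exactly the point the introduction flags as what makes the problem tractable. Moreover Lemma~\ref{lemma:trunc0} already tells us the maximum is achieved at a unique interior critical point $z_0$, so the $\max_\zeta V$ is attained at the unique solution of $\partial_\zeta V=0$, which is the continuous shadow of the equation $r(z)=1$; concretely $\partial_\zeta V=0$ reads as a product-of-sines equality that one recognizes as the condition on the ``extra variable'' in known dihedral-angle presentations of the truncated tetrahedron.

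The second step identifies the resulting function $G(\theta_a,\ldots,\theta_f):=\max_\zeta V(\zeta;\theta_\bullet)$, noting that because of the bar-symmetry (switching all orientations / replacing each color by its complement to $n-1$) the $\overline{\phantom{x}}$-factor contributes the \emph{same} exponential growth rate, so the product grows like $e^{\frac{2n}{\pi}G}$ and the claimed limit equals $G$. One then shows $G$ satisfies the Schl\"afli differential equation: $dG=-\tfrac12\sum_{x\in\{a,\ldots,f\}}\ell_x\,d\theta_x$, where $\ell_x$ is the (hyperbolic) length of the edge $x$ of the truncated tetrahedron with dihedral angles $\theta_\bullet$. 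This is done by differentiating $G=V(\zeta_0(\theta_\bullet);\theta_\bullet)$ in each $\theta_x$: the envelope theorem kills the $\partial_\zeta V\cdot \partial_{\theta_x}\zeta_0$ term since $\partial_\zeta V=0$ at the maximizer, so $\partial_{\theta_x}G=\partial_{\theta_x}V$ evaluated at $\zeta_0$, and this partial is (via \eqref{eq:limlog}, i.e. $\frac{d}{dy}\int_0^y\log|2\sin t|\,dt=\log|2\sin y|$) an explicit combination of $\log|2\sin(\tfrac12(\ldots))|$ terms; the task is to recognize this combination as $-\tfrac12$ times the edge length $\ell_x$ expressed through dihedral angles. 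Since the volume function $\operatorname{Vol}(T)$ also satisfies $d\operatorname{Vol}=-\tfrac12\sum_x \ell_x\,d\theta_x$ (the classical Schl\"afli formula for a hyperbolic polyhedron, here with the truncation faces making right angles so those terms drop out), $G-\operatorname{Vol}$ is locally constant on the (connected) space of admissible angle tuples.

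The third and final step pins the constant to zero by a boundary/degeneration computation: let the truncated tetrahedron degenerate to an ideal one, i.e. send the triples of angles at each vertex to sum to $\pi$ (so the truncating triangles shrink to points). In that regime the summation range $[m,M]$ in Lemma~\ref{lemma:trunc0} collapses — exactly the phenomenon seen in the computation preceding Theorem~\ref{th:ideal}, where only $z=n-1-2b_n$ survives — and the limit reduces to $\Lambda(\alpha)+\Lambda(\beta)+\Lambda(\gamma)$, which is $\operatorname{Vol}$ of the ideal tetrahedron. Hence $G=\operatorname{Vol}$ on the boundary, so the constant vanishes identically and the theorem follows.

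I expect the main obstacle to be the second step: carrying out the differentiation of the potential $V$ at its maximizer and matching the resulting combination of $\log|2\sin|$ terms to the edge lengths $\ell_x$ of the truncated tetrahedron. This requires the explicit trigonometric parametrization of a hyperbolic truncated tetrahedron by its six dihedral angles (as in \cite{U}), including the formula expressing each edge length through the $\theta_\bullet$, and then a somewhat delicate manipulation of the critical-point equation $\partial_\zeta V=0$ to eliminate the auxiliary variable $\zeta_0$ in favor of the $\theta_\bullet$. The positivity-of-summands argument (step one) and the degeneration argument (step three) are comparatively routine given Lemma~\ref{lemma:trunc0} and the computation already done for Theorem~\ref{th:ideal}.
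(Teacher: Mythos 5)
Your overall architecture (positivity of the summands from Lemma \ref{lemma:trunc0}, sandwiching the sum between its largest term and $n$ times it, passing to a Lobachevsky-function potential, verifying the Schl\"afli equation via the envelope argument, and fixing the constant by degenerating to the ideal case of Theorem \ref{th:ideal}) is the same as the paper's. But there is a genuine error at the hinge of your second step: the claim that, by ``bar symmetry'', the factor $\left\{\begin{smallmatrix}\overline a_n&\overline b_n&\overline e_n\\ \overline d_n&\overline c_n&\overline f_n\end{smallmatrix}\right\}_{tet}$ has the \emph{same} exponential growth rate as the unbarred factor, so that the limit equals the single-factor rate $G=\max_\zeta V$. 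Reversing all orientations does give an isotopic graph, but the isotopy permutes the edges: by \eqref{equation:symmetry} the barred symbol equals $\left\{\begin{smallmatrix}d&b&f\\ a&c&e\end{smallmatrix}\right\}_{tet}$, i.e.\ the symbol with two pairs of opposite edges exchanged, \emph{not} the original symbol; the Remark following the theorem stresses exactly this, that the single symbol is not symmetric while only the product's limit is. In the paper's proof the two factors are governed by two \emph{different} critical points $\zeta^{(1)}$ and $-\zeta^{(2)}$ of the potential $g$ (the two roots of the quadratic \eqref{equation:quadratic}, separated by $\pm\sqrt{\det G}$ of the Gram matrix), and the prefactors outside the two sums are reciprocal to each other, cancelling only in the product. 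So the correct limit is $\tfrac12\bigl(g(\theta_\bullet,\zeta^{(1)})-g(\theta_\bullet,-\zeta^{(2)})\bigr)$, an average of two generically distinct growth rates, not $G$.

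This is not a cosmetic point: your plan for verifying Schl\"afli would break. Differentiating the single-factor maximum in $\theta_a$ does not produce $-\tfrac12 l_a$; as in \eqref{eq:diffg}--\eqref{equation:computation}, the edge length (Ushijima's formula \eqref{eq:length}, involving $c_{34}$, $c_{33}c_{44}$ and $\sqrt{-\det G}\sin\theta_a$) emerges only from the \emph{mixed} combination of $\zeta^{(1)}$ and $\zeta^{(2)}$ coming from both factors, after a sign analysis determining which root of \eqref{equation:quadratic} lies in which admissible range. (A further, smaller omission of the same kind: because the symbol lacks the full tetrahedral symmetry, the derivatives in $\theta_b,\ldots,\theta_f$ cannot be obtained by ``the same computation''; the paper has to invoke the specific symmetries \eqref{equation:symmetry} and re-run the argument with permuted, partly negated, angle arguments.) To repair your proposal you must treat the two factors separately, keep track of the cancelling prefactors, and carry both critical points through the Schl\"afli computation — which is precisely what the paper does.
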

\begin{rem}
$
\left\{
\begin{matrix}
a_n & b_n & e_n \\
d_n & c_n & f_n
\end{matrix}
\right\}_{tet}
$
is defined for tetrahedron with oriented edges, and it is not symmetric with respect to the group of symmetries of the tetrahedron.  
However, the limit of the product 
$
\left\{
\begin{matrix}
a_n & b_n & e_n \\
d_n & c_n & f_n
\end{matrix}
\right\}_{tet}
\left\{
\begin{matrix}
\overline a_n & \overline b_n & \overline e_n \\
\overline d_n & \overline c_n & \overline f_n
\end{matrix}
\right\}_{tet}
$
becomes symmetric.  
\end{rem}
\begin{proof}{
The triples $(a_n, b_n, e_n)$, $(a_n, c_n, f_n)$, $(b_n, d_n, f_n)$, $(c_n, d_n, e_n)$, $(\overline a_n, \overline b_n, \overline e_n)$, $(\overline a_n$, $\overline c_n$, $\overline f_n)$, $(\overline b_n, \overline d_n, \overline f_n)$, $(\overline c_n, \overline d_n, \overline e_n)$ are all admissible. 
The range of sum is reduced and the signature is determined as follows. 
\begin{multline}
\left\{
\begin{matrix}
a_n & b_n & e_n \\
d_n & c_n & f_n 
\end{matrix}\right\}_{tet}
\!\!\!\!\!
=
(-1)^{n-1} \!
\dfrac{\{B_{d_ne_nc_n}\}!  \{B_{a_nb_ne_n}\}!}
{\{B_{b_nd_nf_n}\}!  \{B_{a_nf_nc_n}\}!} \!
\qbin{2e_n}{\!\!\!\!A_{a_nb_ne_n}+1-n\!\!\!\!} \!
{\qbin{2e_n}{\!\!\!\!B_{e_nc_nd_n}\!\!\!\!}}^{-1} 
\\
\sum_{z = m_n^{(1)}}^{M_n^{(1)}}\,
\qbin{A_{a_nf_nc_n}+1-n}{2c_n+z+1-n} \, 
\qbin{B_{a_nc_nf_n}+z}{B_{a_nc_nf_n}} \,
\\
\qbin{B_{b_nf_nd_n}+B_{d_ne_nc_n}-z}{B_{b_nf_nd_n}} \,
\qbin{B_{d_nc_ne_n}+z}{B_{d_nf_nb_n}},     
\label{eq:trunc1}
\end{multline}
\begin{multline}
\left\{
\begin{matrix}
\overline a_n & \overline b_n & \overline e_n \\
\overline d_n & \overline c_n & \overline f_n 
\end{matrix}\right\}_{tet}
\!\!\!\!\!\!
=
(-1)^{n-1} \! 
\dfrac
{\{B_{b_nd_nf_n}\}!  \{B_{a_n f_n c_n}\}!}
{\{B_{d_n e_n c_n}\}!  \{B_{a_n b_n e_n}\}!} \!
\qbin{2 e_n}{\!\!\!\!A_{a_n b_n e_n}+1-n\!\!\!\!}^{-1} \!
{\qbin{2 e_n}{\!\!\!\!B_{e_n c_n d_n}\!\!\!\!}} 
\\
\sum_{z = m_n^{(2)}}^{M_n^{(2)}}\,
\qbin{2 c_n-z}{A_{a_n f_n c_n}+1-n} \, 
\qbin{B_{a_n c_n f_n}}{B_{a_n c_n f_n}-z} \,
\\
\qbin{B_{b_n f_n d_n}}{B_{b_n f_n d_n}+B_{d_n e_n c_n}+1-n+z} \,
\qbin{B_{d_n f_n b_n}}{B_{d_n c_n e_n}-z}
,  
\label{eq:trunc2}
\end{multline}
where $m_n^{(1)} = \max(n-1-2c_n,-b_n-c_n+e_n+f_n)$, 
$M_n^{(1)}= \min(B_{d_ne_nc_n}, B_{a_nf_nc_n})$, 
$m_n^{(2)} = \max(0, n-1-b_n+c_n-e_n-f_n)$ and 
$M_n^{(2)}= \min(B_{a_n c_n f_n}, B_{d_n c_n e_n})$.  
Let 
\begin{multline*}
R_n^{(1)}(z)=
\\
\qbin{\!\!\!A_{a_nf_nc_n}+1-n\!\!\!}{\!\!\!2c_n+z+1-n\!\!\!} 
\qbin{\!\!\!B_{a_nc_nf_n}+z\!\!\!}{B_{a_nc_nf_n}} 
\qbin{\!\!\!B_{b_nf_nd_n}+B_{d_ne_nc_n}-z\!\!\!}{B_{b_nf_nd_n}} 
\qbin{\!\!\!B_{d_nc_ne_n}+z\!\!\!}{B_{d_nf_nb_n}},         
\end{multline*}
for $m^{(1)} < z < M^{(1)}$ and
\begin{multline*}
R_n^{(2)}(z)=
\\
\qbin{2 c_n-z}{\!\!\!A_{a_n f_n c_n}\!+1-n\!\!\!\!} \!\!
\qbin{B_{a_n c_n f_n}}{\!\!\!B_{a_n c_n f_n}\!-z\!\!\!\!}\!\!
\qbin{B_{b_n f_n d_n}}{\!\!\!B_{b_n f_n d_n}\!+B_{d_n e_n c_n}\!\!+1-n+z\!\!\!\!} \!\!
\qbin{B_{d_n f_n b_n}}{\!\!\!B_{d_n c_n e_n}\!-z\!\!\!\!} 
\end{multline*}
for $m^{(2)} < z < M^{(2)}$.  
Let 
$$
r_n^{(1)}(z) = \dfrac{R_n^{(1)}(z)}{R_n^{(1)}(z-1)},  
\qquad 
r_n^{(2)}(z) = \dfrac{R_n^{(2)}(z)}{R_n^{(2)}(z-1)}.
$$  
Then 
\begin{multline*}
r_n^{(1)}(z) 
=\\
-\dfrac{\{B_{a_nf_nc_n}-z+1\} \, \{B_{a_nc_nf_n}+z\}\,
\{B_{d_ne_nc_n}-z+1\} \, \{B_{d_nc_ne_n}+z\}}
{\{2c_n+z+1\} \, \{z\} \, 
 \{ B_{b_nf_nd_n}+B_{d_ne_nc_n}-z+1\} \,
\{B_{d_nc_ne_n} - B_{d_nf_nb_n} + z\} }, 
\end{multline*}
\begin{multline*}
r_n^{(2)}(z) 
=\\
-\dfrac{\{B_{a_nf_nc_n}+z\} \, \{B_{a_nc_nf_n}-z+1\}\,
\{B_{d_ne_nc_n}+z\} \, \{B_{d_nc_ne_n}-z+1\}}
{\{2c_n-z+1\} \, \{z\} \, 
 \{ B_{b_nf_nd_n}+B_{d_ne_nc_n}+1+z\} \,
\{B_{d_nf_nb_n} - B_{d_nc_ne_n} +z\} }.     
\end{multline*}
According to Lemma \ref{lemma:trunc0}, 
$R_n^{(1)}$ and $R_n^{(2)}$ are positive, and there are unique integers $z_n^{(1)}$ and  $z_n^{(2)}$ 
where $z_n^{(1)}$ coresponds
to the maximum of $R_n^{(1)}(z)$  
and 
$z_n^{(2)}$ corresponds to the maximum of $R_n^{(2)}(z)$ at the integer points. 
Then we have
$$
R_n^{(1)}(z_n^{(1)}) \, R_n^{(2)}(z_n^{(2)})
\leq
\left\{
\begin{matrix}
a_n & b_n & e_n \\
d_n & c_n & f_n
\end{matrix}
\right\}_{tet}\left\{
\begin{matrix}
\overline a_n & \overline b_n & \overline e_n \\
\overline d_n & \overline c_n & \overline f_n
\end{matrix}
\right\}_{tet}
\!\!\!\!
\leq
n \, R_n^{(1)}(z_n^{(1)}) \, R_n^{(2)}(z_n^{(2)}).  
$$
Hence
\begin{multline}
\lim_{n\to\infty}
\,
\dfrac{\pi}{2\, n}\,
 \log 
 \left(
\left\{
\begin{matrix}
a_n & b_n & e_n \\
d_n & c_n & f_n
\end{matrix}
\right\}_{tet}\,\left\{
\begin{matrix}
\overline a_n & \overline b_n & \overline e_n \\
\overline d_n & \overline c_n & \overline f_n
\end{matrix}
\right\}_{tet}
\right)
=
\\
\lim_{n\to\infty}
\,
\dfrac{\pi}{2\, n}\,
\log\left(
R_n^{(1)}(z_n^{(1)}) \, R_n^{(2)}(z_n^{(2)})
\right).
\label{eq:lim}
\end{multline}
Let  
$
\zeta^{(1)} = \lim_{n\to\infty} \frac{2 \, \pi}{n} \, z_n^{(1)}$,
$
\zeta^{(2)} = \lim_{n\to\infty} \frac{2 \, \pi}{n} \, z_n^{(2)}
$.
For $0 < \beta <  \alpha < \pi$, 
we have
$$
\lim_{n\to\infty}
\dfrac{\pi}{n} \, 
\log\qbin{\lfloor\frac{\alpha \, n}{\pi}\rfloor}{\lfloor\frac {\beta \, n}{\pi}\rfloor}
=
-\Lambda(\alpha)+\Lambda(\beta) + \Lambda(\alpha-\beta)
$$
by the definition of the Riemann integral.  
Replacing every quantum binomial in $R_n^{(1)}$ and $R_n^{(2)}$ in \eqref{eq:lim} by using this relation, we get
\begin{multline}
\lim_{n\to\infty}
\dfrac{\pi}{n} \, \log
\left\{
\begin{matrix}
a_n & b_n & e_n \\
d_n & c_n & f_n 
\end{matrix}\right\}_{tet} \, 
\left\{
\begin{matrix}
\overline a_n & \overline b_n & \overline e_n \\
\overline d_n & \overline c_n & \overline f_n 
\end{matrix}\right\}_{tet}
=
\\
g(\theta_a, \theta_b, \theta_c, \theta_d, \theta_e, \theta_f, \zeta ^{(1)})
-
g(\theta_a, \theta_b, \theta_c, \theta_d, \theta_e, \theta_f, -\zeta ^{(2)}), 
\label{eq:limit}
\end{multline}
where
\begin{multline}
g(\theta_a, \theta_b, \theta_c, \theta_d, \theta_e, \theta_f, \zeta)
=
\\
\Lambda(\frac{\pi - \theta_a - \theta_f + \theta_c-\zeta }{2})
+
\Lambda(\frac{- 2\, \theta_c + \zeta}{2})
-
\Lambda(\frac{\pi-\theta_a-\theta_c+\theta_f+ \zeta}{2})
\\
+
\Lambda(\frac{\zeta}{2})
-
\Lambda(\frac{- \theta_b + \theta_c - \theta_e - \theta_f-\zeta}{2}
 )
+
\Lambda(\frac{\pi - \theta_d - \theta_e + \theta_c - \zeta}{2})
\\
-
\Lambda(\frac{\pi - \theta_d - \theta_c + \theta_e + \zeta}{2})
+
\Lambda(\frac{-\theta_b - \theta_c + \theta_e + \theta_f + \zeta}{2}).   
\end{multline}
Moreover,
 $\zeta^{(1)}$ and $-\zeta^{(2)}$ are solutions of 
\begin{equation}
\dfrac{d}{d\zeta} \,
g(\theta_a, \theta_b, \theta_c, \theta_d, \theta_e, \theta_f, \zeta)
=0
\label{equation:zeta}
\end{equation}  
satisfying 
 $$
 \begin{aligned}
\max(2 \, \theta_c,\theta_b+\theta_c-\theta_e-\theta_f) &< \zeta ^{(1)} <
\min\left({\pi + \theta_c - \theta_d-\theta_e}, \ 
{\pi - \theta_a + \theta_c - \theta_f}\right), 
\\[8pt]
\max\left(0,\ {\theta_b-\theta_c + \theta_e + \theta_f}\right) &< \zeta ^{(2)} < 
\min\left({\pi - \theta_a - \theta_c + \theta_f},\  
{\pi - \theta_c- \theta_d  + \theta_e}\right),  
\end{aligned}
$$ 
since
$$
\begin{aligned}
\lim_{n\to\infty}\dfrac{\pi}{n}
\log r_n^{(1)}(z_n)
&=
\dfrac{d}{d\zeta} \,
g(\theta_a, \theta_b, \theta_c, \theta_d, \theta_e, \theta_f, \zeta),
\\
\lim_{n\to\infty}\dfrac{\pi}{n}
\log r_n^{(2)}(z_n)
&=
-\dfrac{d}{d\zeta} \,
g(\theta_a, \theta_b, \theta_c, \theta_d, \theta_e, \theta_f, -\zeta),
\end{aligned}
$$
where $z_n$ is a sequence such that $\lim_{n\to\infty}\frac{2 \pi}{n} z_n = \zeta$.  
Taking the exponential of two times the both sides of \eqref{equation:zeta},
we get
\begin{equation}
\dfrac{|\cos(\frac{\theta_a -\theta_c + \theta_f + \zeta}{2})|  \,
|\cos(\frac{\theta_a + \theta_c - \theta_f - \zeta}{2})|  \,
|\cos(\frac{\theta_c - \theta_d - \theta_e - \zeta}{2})|\,
|\cos(\frac{\theta_c + \theta_d - \theta_e - \zeta}{2})|}
{|\sin(\frac{\zeta-2 \theta_c}{2})|   \,
| \sin(\frac{-\theta_b + \theta_c - \theta_e - \theta_f - \zeta}{2})|\,
|\sin(\frac{-\theta_b - \theta_c + \theta_e + \theta_f + \zeta}{2})| \,
|\sin(\frac{\zeta}{2})|}
=1.  
\label{equation:abs}
\end{equation}
If $\zeta = \zeta^{(1)}$, $\sin(\frac{-\theta_b + \theta_c - \theta_e - \theta_f - \zeta^{(1)}}{2})$ is negative and the other seven values of the trigonometric functions in \eqref{equation:abs}  are all positive.
If $\zeta= -\zeta^{(2)}$, 
$\sin(\frac{-\zeta^{(2)}-2 \theta_c}{2})$,
$\sin(\frac{-\zeta^{(2)}}{2})$, 
$\sin(\frac{-\theta_b - \theta_c + \theta_e + \theta_f - \zeta^{(2)}}{2})$
are negative and the other five values of the trigonometric functions in \eqref{equation:abs} are positive.  
Theorefore, $\zeta^{(1)}$ and $-\zeta^{(2)}$ are solutions of 
$$
\dfrac{\cos(\frac{\theta_a -\theta_c + \theta_f + \zeta}{2}) \, \cos(\frac{\theta_a + \theta_c - \theta_f - \zeta}{2})  \, \cos(\frac{\theta_c - \theta_d - \theta_e - \zeta}{2})\,\cos(\frac{\theta_c + \theta_d - \theta_e - \zeta}{2})}
{\sin(\frac{\zeta-2 \theta_c}{2})    \sin(\frac{-\theta_b + \theta_c - \theta_e - \theta_f - \zeta}{2})\sin(\frac{-\theta_b - \theta_c + \theta_e + \theta_f + \zeta}{2}) \sin(\frac{\zeta}{2})}
= -1, 
$$
and it is equivalent to a quadratic equation 
\begin{equation}
C_2 \, z^2 + C_1 \, z + C_0 = 0
\label{equation:quadratic}
\end{equation}
where $z = e^{ i \zeta}$, 
$$
\begin{aligned}
C_0 &=
a\,b\,c^2\,d + a\,b\,c^4\,d + 
  a\,b\,c^3\,e + 
  a\,b\,c^3\,d^2\,e + 
  \\&\qquad\qquad
  b\,c^3\,d\,f + 
  a^2\,b\,c^3\,d\,f + 
  a\,c^3\,d\,e\,f + 
  a\,b^2\,c^3\,d\,e\,f,
  \\
C_1 &=
-a\,b\,c^2\,d + 
  a\,b\,c^2\,d\,e^2 + 
  b\,c^2\,e\,f + 
  a^2\,b\,c^2\,e\,f - 
  a\,c\,d\,e\,f - 
  a\,b^2\,c\,d\,e\,f - 
     \\&\quad
 a\,c^3\,d\,e\,f - 
  a\,b^2\,c^3\,d\,e\,f + 
  b\,c^2\,d^2\,e\,f + 
  a^2\,b\,c^2\,d^2\,e\,f + 
  a\,b\,c^2\,d\,f^2 - 
  a\,b\,c^2\,d\,e^2\,f^2,
\\
C_2 &=
a\,c\,d\,e\,f + 
  a\,b^2\,c\,d\,e\,f + 
  b\,c\,d\,e^2\,f + 
  a^2\,b\,c\,d\,e^2\,f + 
  \\&\qquad\qquad
  a\,b\,c\,e\,f^2 + 
  a\,b\,c\,d^2\,e\,f^2 + 
  a\,b\,d\,e^2\,f^2 + 
  a\,b\,c^2\,d\,e^2\,f^2,
\end{aligned}
$$
and $a = e^{i \theta_a}$, $\cdots$, $f = e^{i\theta_f}$.    
The two solutions  $z_1 = e^{i\zeta^{(1)}}$, $z_2 = e^{-i\zeta^{(2)}}$  of \eqref{equation:quadratic} are given by
$$
z_1  =
\dfrac{-C_1 - 4 \,a \, b \, c^2 \, d \, e \, f \sqrt{\det G}}{2 \, C_2},
\qquad
z_2  =
\dfrac{-C_1 + 4 \,a \, b \, c^2 \, d \, e \, f \sqrt{\det G}}{2 \, C_2},  
$$
where $G$ is the Gram matrix  of $T$ given by
\begin{equation}
G = 
\begin{pmatrix}
1 & -\cos \theta_a & - \cos \theta_b & -\cos \theta_f \\
-\cos\theta_a & 1 & -\cos\theta_e & -\cos\theta_c \\
-\cos\theta_b & -\cos\theta_e & 1 & -\cos\theta_d \\
-\cos\theta_f & -\cos\theta_c & -\cos\theta_d & 1
\end{pmatrix}.    
\label{eq:gram}
\end{equation}
since
$
\det G = \frac{C_1^2 - C_0 \, C_2}{16 (a \,b \,c^2 \,d \,e \,f)^2}.
$  
Recall that the determinant $\det G$ is a negative real number since $T$ is a truncated hyperbolic tetrahedron,  and we assign $\sqrt{\det G} = i \, \sqrt{-\det G}$.    
\par
To compare \eqref{eq:limit} with the volume of $T$, we use
the method in \cite{U} based on the following Schl\"afli's differential formula:   
\begin{equation}
d\, \operatorname{Vol}(T)
=
-\dfrac{1}{2}\,
\left(l_a \, d\theta_a + l_b \, d\theta_b + l_c \, d\theta_c + l_d\, d\theta_d + l_e \, d\theta_e + l_f \, d\theta_f\right),   
\label{eq:diff}
\end{equation}
where $l_a$, $\cdots$, $l_f$ are lengths of edges labeled by $a$, $\cdots$, $f$ respectively.  
Let $G_{ij}$ denote the submatrix of $G$ obtained by deleting the $i$-th row and $j$-th column, and let $c_{ij}= (-1)^{i+j}\, \det G_{ij}$ be the corresponding cofactor.   
Then, by the formula (5.2) in \cite{U}, the length $l_a$ is given by
\begin{equation}
2\, l_a =
\log\left(
\dfrac{2 \, c_{34}^2 - c_{33}\, c_{44} + 2\, c_{34} \, \sqrt{-\det G} \, \sin \theta_a}{c_{33}\, c_{44}}
\right).  
\label{eq:length}
\end{equation}
On the other hand, 
\begin{multline}
2\,\dfrac{\partial}{\partial \theta_a} 
 \left(g(\theta_a, \theta_b, \theta_c, \theta_d, \theta_e, \theta_f, \zeta ^{(1)})
-
g(\theta_a, \theta_b, \theta_c, \theta_d, \theta_e, \theta_f, -\zeta ^{(2)})\right)
=
\\
\log\Big(\dfrac{\cos(
 \frac{ \theta_a - \theta_c+ \theta_f + \zeta ^{(1)}}{2}
) \, 
 \cos(
 \frac{\theta_a + \theta_c - \theta_f + \zeta ^{(2)}}{2}
)
 }
{
 \cos(
 \frac{\theta_a - \theta_c + \theta_f - \zeta ^{(2)}}{2}
 )
  \cos(
 \frac{\theta_a + \theta_c - \theta_f - \zeta ^{(1)}}{2}
 )
} \Big).  
\label{eq:diffg}
\end{multline}
To rationalize the denominator of the right-hand side of \eqref{eq:diffg}, we compute
\begin{multline}
\dfrac{\cos(
 \frac{ \theta_a - \theta_c+ \theta_f + \zeta ^{(1)}}{2}
) \, 
 \cos(
 \frac{\theta_a + \theta_c - \theta_f + \zeta ^{(2)}}{2}
)
 }
{
 \cos(
 \frac{\theta_a - \theta_c + \theta_f - \zeta ^{(2)}}{2}
 )
  \cos(
 \frac{\theta_a + \theta_c - \theta_f - \zeta ^{(1)}}{2}
 )
}
=
\dfrac{(a\, f\, z_1 + c)\,(a \,c + f\, z_2 )
 }
{
(a \,f \,z_2 + c )\,(a\, c + f z_1 )
}
\\
=
\dfrac{(a\, f\, z_1 + c)^2\,(a \,c + f\, z_2 )^2
 }
{
(a\, f\, z_1 + c)\,(a \,f \,z_2 + c )\,(a\, c + f z_1 )\,(a\, c + f\,  z_2)
}, 
\label{equation:computation}
\end{multline}
where $a=e^{i\theta_a}$, $\cdots$, $f = e^{i\theta_f}$, $z_1 = e^{i\zeta^{(1)}}$, $z_2= e^{-i\zeta^{(2)}}$ as before.  
Then \eqref{equation:computation} turns out to be equal to 
$$
\frac{2  c_{34}^2 - c_{33}c_{44} - 2 c_{34}  \sqrt{-\det G} \, \sin \theta_a}{c_{33}\, c_{44}}
=
\left(
\frac{2 c_{34}^2 - c_{33}c_{44} + 2 c_{34}  \sqrt{-\det G} \, \sin \theta_a}{c_{33}\, c_{44}}
\right)^{-1}
$$
by an actual computation.  
Hence we get
$$
\dfrac{\partial}{\partial \theta_a} \, 
 \big(g(\theta_a, \theta_b, \theta_c, \theta_d, \theta_e, \theta_f, \zeta ^{(1)})
-
g(\theta_a, \theta_b, \theta_c, \theta_d, \theta_e, \theta_f, -\zeta ^{(2)})\big)
=
-l_a.  
$$  
\par
To see the differential with respect to $\theta_b$, we use the symmetry given in \eqref{equation:symmetry}, 
$$
\left\{
\begin{matrix}
a & b & e \\
d & c & f
\end{matrix}
\right\}_{tet} 
=
\left\{
\begin{matrix}
b & \overline e & \overline a \\
c & f & d
\end{matrix}
\right\}_{tet}.  
$$  
Please note that the triples $(b, \overline e, \overline a)$, $(b, f, d)$, $(c, \overline e, \overline a)$, $(c, f, \overline a)$ are all admissible.  
Applying the above argument and the fact that $\cos \theta = \cos(-\theta)$, we have 
$$
\dfrac{\partial}{\partial \theta_b} \, 
g(\theta_b, -\theta_e, \theta_f, \theta_c, -\theta_a, \theta_d, \zeta_b^{(1)})
-
g(\theta_b, -\theta_e, \theta_f, \theta_c, -\theta_a, \theta_d, -\zeta_b^{(2)})
= -l_b, 
$$
where $\zeta_b^{(1)}$ and $-\zeta_b^{(2)}$ are solutions of the equation
$$
\dfrac{\partial}{\partial \zeta}
\,  g(\theta_b, -\theta_e, \theta_f, \theta_c, -\theta_a, \theta_d, \zeta)
= 0
$$
satisfying
$$
\max(2 \, \theta_f, \ \theta_e + \theta_f + \theta_a - \theta_d)
< \zeta_b^{(1)} <
{\pi - \theta_b - \theta_d + \theta_f},  
$$
$$
\max(0, \ \theta_e - \theta_f - \theta_a + \theta_d) < \zeta_b^{(2)} < 
\pi-\theta_f - \theta_c - \theta_a. 
$$ 
This implies that
$$
\dfrac{\partial}{\partial \theta_b} \, 
 (g(\theta_a, \theta_b, \theta_c, \theta_d, \theta_e, \theta_f, \zeta^{(1)})
-
g(\theta_a, \theta_b, \theta_c, \theta_d, \theta_e, \theta_f, -\zeta^{(2)}))
=
-l_b.  
$$  
Similarly, for $\theta_c$, $\theta_d$, $\theta_e$, $\theta_f$, we have
$$
\begin{matrix}
\dfrac{\partial}{\partial \theta_c} \, 
 (g(\theta_a, \theta_b, \theta_c, \theta_d, \theta_e, \theta_f, \zeta^{(1)})
-
g(\theta_a, \theta_b, \theta_c, \theta_d, \theta_e, \theta_f, -\zeta^{(2)}))
=
-l_c,  
\\
\vdots
\\
\dfrac{\partial}{\partial \theta_f} \, 
 (g(\theta_a, \theta_b, \theta_c, \theta_d, \theta_e, \theta_f, \zeta^{(1)})
-
g(\theta_a, \theta_b, \theta_c, \theta_d, \theta_e, \theta_f, -\zeta^{(2)}))
=
-l_f. 
\end{matrix}
$$  
Hence there is a constant ${C}$ such that
$$
g(\theta_a, \theta_b, \theta_c, \theta_d, \theta_e, \theta_f, \zeta ^{(1)})
-
g(\theta_a, \theta_b, \theta_c, \theta_d, \theta_e, \theta_f, -\zeta ^{(2)})
=
2\, \operatorname{Vol}(T) + C.  
$$
But $g(\theta_a, \theta_b, \theta_c, \theta_d, \theta_e, \theta_f, \zeta ^{(1)}) - g(\theta_a, \theta_b, \theta_c, \theta_d, \theta_e, \theta_f, -\zeta ^{(2)})$ can be extended continuously to the case of ideal tetrahedra whose dihedral angles satisfy $\theta_a+\theta_b + \theta_e=\theta_a+\theta_c + \theta_f=\theta_b+\theta_d + \theta_f=\theta_c+\theta_d + \theta_e=\pi$, and Theorem \ref{th:ideal} implies that the constant $C=0$.    
Therefore, Theorem \ref{th:trunc} holds.  
}
\end{proof}

\section{Invariants of graphs}
In this section we exploit the graphical relations satisfied by the algebraic objects studied in the preceding sections to define an invariant of framed, oriented, colored trivalent graphs. Then, we provide a face-model computing the invariant. 
\subsection{Graphical relations between symbols}
\begin{prop}
Letting $t_a = a \, (a+1-n)$ and using the above explained graphical convention to draw morphisms of representations, the following relations hold:
%
\begin{equation}
\begin{matrix}
a \\
\begin{picture}(30,50)
\thicklines
\put(0,60){\line(0,-1){20}}
\put(0,60){\vector(0,-1){15}}
\put(0,40){\line(1,-1){10}}
\put(15,25){\line(1,-1){10}}
\put(25,15){\line(0,1){25}}
\put(25,40){\line(-1,-1){25}}
\put(0,15){\vector(0,-1){15}}
\end{picture}
\end{matrix}
=
\xi_n^{2t_a}\  \,  
\begin{matrix}
a \\
\begin{picture}(13,50)
\thicklines
\put(0,60){\vector(0,-1){60}}
\end{picture}
\end{matrix},
\qquad\quad
\begin{matrix}
a \quad\qquad b
\\
\begin{picture}(30,65)(0,-5)
\thicklines
\put(30,60){\line(-1,-1){30}}
\put(30,30){\vector(-1,-1){15}}
\put(0,60){\line(1,-1){10}}
\put(20,40){\line(1,-1){10}}
\put(0,30){\vector(1,-1){15}}
\put(15,15){\vector(0,-1){20}}
\end{picture}
\\
c
\end{matrix}
=
\xi_n^{t_c-t_a-t_b} \!\!\!\!\! 
\begin{matrix}
a \qquad\qquad b
\\
\begin{picture}(50,60)
\thicklines
\put(0,60){\vector(1,-1){25}}
\put(50,60){\vector(-1,-1){25}}
\put(25,35){\vector(0,-1){35}}
\end{picture}
\\
c
\end{matrix},
\label{eq:t}
\end{equation}
\begin{equation}
\begin{matrix}
a \qquad b \ \ c
\\
\begin{picture}(45,65)(0,-5)
\thicklines
\put(0,60){\vector(1,-2){15}}
\put(30,60){\line(0,-1){10}}
\put(30,40){\vector(0,-1){25}}
\put(15,30){\vector(1,-1){15}}
\put(45,60){\vector(-1,-1){30}}
\put(30,15){\vector(0,-1){20}}
\put(13,13){$e$}
\end{picture}
\\
\quad d
\end{matrix}
=
\sum_{f}
\xi_n^{t_a+t_b-t_e-t_f} \,  
\left\{\begin{matrix}
c & a & e \\
b & d & f
\end{matrix}\right\}_{\xi_n} \!\!\!\!
\begin{matrix}
a\qquad b\qquad c
\\
\begin{picture}(53,65)(0,-5)
\thicklines
\put(0,60){\vector(1,-2){15}}
\put(30,60){\vector(-1,-2){15}}
\put(15,30){\vector(1,-1){15}}
\put(52.5,60){\vector(-1,-2){22.5}}
\put(30,15){\vector(0,-1){20}}
\put(13,13){$f$}
\end{picture}
\\
\ \ d
\end{matrix},
\label{eq:c6j}
\end{equation}
\begin{equation}
\qquad\qquad
\begin{matrix}
a \qquad b \ \ c
\\
\begin{picture}(45,65)(0,-5)
\thicklines
\put(0,60){\vector(1,-2){15}}
\put(30,60){\vector(0,-1){45}}
\put(15,30){\vector(1,-1){15}}
\put(45,60){\line(-1,-1){10}}
\put(25,40){\vector(-1,-1){10}}
\put(30,15){\vector(0,-1){20}}
\put(13,13){$e$}
\end{picture}
\\
\quad d
\end{matrix}
=
\sum_{f}
\xi_n^{-t_a-t_b+t_e+t_f} \,  
\left\{\begin{matrix}
c & a & e \\
b & d & f
\end{matrix}\right\}_{\xi_n} \!\!\!\!
\begin{matrix}
a\qquad b\qquad c
\\
\begin{picture}(53,65)(0,-5)
\thicklines
\put(0,60){\vector(1,-2){15}}
\put(30,60){\vector(-1,-2){15}}
\put(15,30){\vector(1,-1){15}}
\put(52.5,60){\vector(-1,-2){22.5}}
\put(30,15){\vector(0,-1){20}}
\put(13,13){$f$}
\end{picture}
\\
\ \ d
\end{matrix}.
\label{eq:c6jm}
\end{equation}
\end{prop}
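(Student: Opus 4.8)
Relations \eqref{eq:t}, \eqref{eq:c6j} and \eqref{eq:c6jm} are the ribbon and braiding compatibilities of the category of highest-weight $\mathcal U_{\xi_n}(sl_2)$-modules assembled in Section~1, and my plan is to derive each of them from the structure maps $\cup_{a,b},\cap_{a,b},{}^{b}_{a}R$ together with the already-established facts \eqref{eq:Rdelta}, \eqref{eq:YBE}, Proposition~\ref{prop:decomposition}, \eqref{eq:theta}, \eqref{eq:decomposition}, and the defining relation of the $6j$-symbol (Figure~\ref{fig:6j}).

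First I would prove the first identity in \eqref{eq:t}. Its left-hand side is an endomorphism of the \emph{simple} module $V^{a}$, built from $\cup_{a,n-1-a}$, $\cap_{a,n-1-a}$ and a single $R$-matrix, so by Schur's lemma it is a scalar; to identify the scalar it is enough to evaluate the composition on one extreme weight vector, say $e^{a}_{0}$ (or $e^{a}_{n-1}$), for which only the $m=0$ term of \eqref{equation:R} survives, so that the computation with \eqref{equation:cap} and \eqref{equation:cup} is one line and yields the value $\xi_{n}^{2t_{a}}$ with $t_{a}=a(a+1-n)$. (Equivalently one recognizes the curl as the action of the ribbon element on $V^{a}$ and reads off its eigenvalue.)

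Next, the second identity in \eqref{eq:t}: the left-hand side is a module map $V^{a}\otimes V^{b}\to V^{c}$, and since by Proposition~\ref{prop:decomposition} the module $V^{c}$ occurs with multiplicity one in both $V^{a}\otimes V^{b}$ and $V^{b}\otimes V^{a}$, it is a scalar multiple of $Y^{c}_{a,b}$ and I must compute the scalar. The quickest route is to combine the first identity with the ribbon axiom: the double braiding restricted to the $V^{c}$-summand equals $\xi_{n}^{2(t_{c}-t_{a}-t_{b})}$, which forces the single-braiding scalar to be $\pm\xi_{n}^{t_{c}-t_{a}-t_{b}}$, and the sign is then pinned down by matching the top-weight component of $Y^{c}_{b,a}\circ{}^{b}_{a}R$ with that of $Y^{c}_{a,b}$ using \eqref{equation:R} and \eqref{equation:QCGC}, where again only one summand contributes.

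Finally \eqref{eq:c6j} and \eqref{eq:c6jm}: both sides lie in $\mathrm{Hom}(V^{a}\otimes V^{b}\otimes V^{c},V^{d})$, and the right-hand side of \eqref{eq:c6j} is already written in the fusion-tree basis indexed by $f$, so it suffices to extract the $f$-component of the left-hand side. By \eqref{eq:decomposition} and \eqref{eq:theta} this component is obtained by post-composing the left-hand graph with the dual basis element $(Y^{a,b}_{f}\otimes\id)\circ Y^{f,c}_{d}$ and dividing by $\qbin{2f+n}{2f+1}$; the result is a closed ``tetrahedral network carrying one extra crossing''. I would then resolve that crossing: a naturality isotopy (using \eqref{eq:Rdelta}) brings it onto an adjacent trivalent vertex, the second identity of \eqref{eq:t} replaces it by an uncrossed vertex at the cost of a power $\xi_{n}^{t_{\bullet}-t_{\bullet}-t_{\bullet}}$, and any incidental curl is absorbed by the first identity of \eqref{eq:t}; what remains is an ordinary tetrahedral network whose value, by the computation of such a network in terms of the $6j$-symbol (the lemma preceding \eqref{equation:6jtet}), equals $\qbin{2f+n}{2f+1}$ times $\left\{\begin{matrix} c & a & e\\ b & d & f\end{matrix}\right\}_{\xi_n}$, the $\theta$-factor cancelling the normalization and leaving exactly $\xi_{n}^{t_{a}+t_{b}-t_{e}-t_{f}}\left\{\begin{matrix} c & a & e\\ b & d & f\end{matrix}\right\}_{\xi_n}$, as claimed. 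Identity \eqref{eq:c6jm} is the mirror statement, obtained verbatim with ${}^{b}_{a}R$ replaced by $({}^{b}_{a}R)^{-1}$ (equivalently, \eqref{eq:t} replaced by its inverse), which flips the sign in the exponent. \textbf{Main obstacle.} The argument is conceptually routine; the real work is the bookkeeping — keeping the flip and over/under and left/right conventions of the graphical calculus consistent with the explicit formulas \eqref{equation:R}, \eqref{equation:QCGC}, \eqref{equation:projection}, and checking that the powers of $\xi_n$ generated by the successive uses of \eqref{eq:t} combine to precisely $\xi_{n}^{\pm(t_{a}+t_{b}-t_{e}-t_{f})}$ and nothing more. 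As an alternative that bypasses the diagram chase, one can instead verify \eqref{eq:c6j}–\eqref{eq:c6jm} by a direct (but computationally heavier) evaluation on the weight basis using \eqref{equation:R}, \eqref{equation:projection} and \eqref{equation:6j}.
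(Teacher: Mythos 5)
Your argument is correct and, for the two identities in \eqref{eq:t}, it is essentially the paper's own proof: the curl is evaluated on $e_0^a$ (only the $m=0$ term of \eqref{equation:R} survives against \eqref{equation:cap}, \eqref{equation:cup}), and the crossed vertex is identified with $\xi_n^{t_c-t_a-t_b}$ times the plain vertex by Schur/multiplicity-one plus a single-coefficient match using \eqref{equation:R} and \eqref{equation:QCGC}. One caveat there: your ``ribbon axiom'' shortcut (double braiding acts on the $V^c$-summand by $\xi_n^{2(t_c-t_a-t_b)}$) is not available off the shelf, since the paper never establishes that these non-integral modules form a ribbon category --- the compatibility of the explicit ${}_a^bR$ with cups, caps and the $Y$-vertices is exactly what is being built here; but this detour is harmless because the coefficient match you propose anyway determines the full scalar, not just its sign, so the extremal-weight computation alone carries the proof, as in the paper. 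For \eqref{eq:c6j}--\eqref{eq:c6jm} you take a mildly different route: you pair the left-hand morphism against the fusion-tree basis using \eqref{eq:decomposition} and Lemma~\ref{lem:thetagraph}, obtain a crossed tetrahedral network, remove the crossing by \eqref{eq:t}, and evaluate the resulting tetrahedron through the lemma preceding \eqref{equation:6jtet}; the paper instead deforms the diagram directly (Figure~\ref{figure:deformation}), applies \eqref{eq:t} twice and then the defining relation of the $6j$-symbol in Figure~\ref{fig:6j}, never closing the graph. The two are equivalent in content --- both reduce the crossing to twist factors and invoke the graphical meaning of the $6j$-symbol --- but the paper's version avoids the division by $\qbin{2f+n}{2f+1}$ and the extra naturality step needed to slide the crossing onto a vertex (which, like the paper's isotopy, ultimately rests on \eqref{eq:Rdelta} and the intertwining property of $Y$), while your version makes the extraction of the $f$-component completely explicit. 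Either way the residual work is the bookkeeping of $\xi_n$-powers, which you correctly identify as the only delicate point.
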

\bigskip\par
\begin{proof}{
The first relation of \eqref{eq:t} is easily proved by applying the operators ${}_a^aR$, $\cup_{a,n-1-a}$ and $\cap_{a,n-1-a}$ to the vector $e_0^a\in V^a$.
The second relation of \eqref{eq:t} is obtained by applying $Y_c^{b,a}$ and ${}_b^a R$ to the highest weight vector $e_0^c$ in $V^c$.  
Let $u = a+b-c$.  
By using \eqref{equation:R} and \eqref{equation:QCGC}, we know that
The coefficient of 
$e_{0}^a \otimes e_{u}^b$ in ${}_b^a R\big(Y_c^{b,a}(e_0^c)\big)$ is 
$$
\sqrt{-1}^{-u}\,\xi_n^{2(b-u)a-\frac{u(2b-u+1)}{2}} \, \qbin{2c}{a+b+c-(n-1)},
$$ 
while the coefficient of 
$e_{0}^a \otimes e_{u}^b$ in $Y_c^{a,b}(e_0^c)\big)$ is 
$$
\sqrt{-1}^{-u} \, (-1)^u \, \xi_n^{\frac{u(2b-u+1)}{2}}\, \qbin{2c}{a+b+c-(n-1)}.  
$$
Hence ${}_b^a R\circ Y_c^{b,a} = \xi_n^{t_c-t_a-t_b} \, Y_c^{a,b}$.  
\par
Relation \eqref{eq:c6j} is proved by using the deformation of the diagram given in Figure~\ref{figure:deformation} and applying twice relation \eqref{eq:t} and the definition of the $6j$-symbols.  
The graphical meaning of the $6j$-symbol is given in Figure \ref{fig:6j}  and we also use \eqref{eq:t}.
Relation  \eqref{eq:c6jm} is proved similarly.    
}
\end{proof}
\begin{figure}[htb]
$$
\includegraphics[scale=0.25]{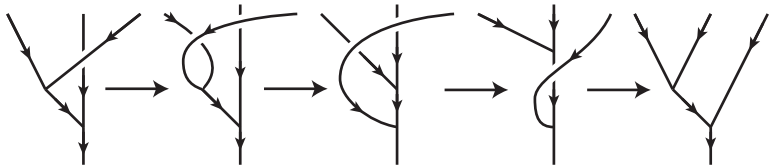}
$$
\caption{The sequence of deformations}
\label{figure:deformation}
\end{figure}
\begin{rem}
As representations of ${\mathcal U}_{\xi_n}(sl_2)$, $V^a$ and $V^{a+2n}$ are isomorphic, but the  twist operators are different since $a \notin \frac{1}{2}\mathbb Z$.
Such difference comes from the factor $q^{\frac{H\otimes H}{2}}$ in the universal $R$-matrix.  
\end{rem}
\subsection{Construction}
Let now $\Gamma$ be a framed oriented trivalent graph in $S^3$ and let us fix once and for all a natural number $n\geq 2$ as well as a root 
$\xi_n=\exp(\frac{\pi\sqrt{-1}}{n})$. 
Let us first fix the notation we shall use in this section: let $E_0,\ldots, E_r$ be the edges of $\Gamma$ and let us assume that the framing of $\Gamma$ forms an orientable surface; this enables us to always assume that, in our drawings, the framing of $\Gamma$ lies horizontally above the blackboard plane (indeed this can be always achieved up to modifying the diagram of $\Gamma$, see \cite{C3} Lemma 2.3).
\begin{defi}[Coloring]
A coloring on $\Gamma$ is a map $\operatorname{col}:\{\text{edges}\}\to \mc\setminus \frac{1}{2}\mz$ such that for each three-tuple of edges $E_i$, $E_j$, $E_k$ sharing a vertex $v$ (possibly two edges coincide) it holds: 
$$ 
f_v(E_i)+f_v(E_j)+f_v(E_k)\in \{n-1,n,\ldots, 2n-2\}, 
$$
where $f_v(E_i)$ is $\operatorname{col}(E_i)$ if $v$ is the end of $E_i$ and $n-1-\operatorname{col}(E_i)$ otherwise. 
\end{defi}
\par
Given a trivalent graph $\Gamma$ embedded in $S^3$ equipped with an orientation of its edges, a framing and a coloring (such a datum is called 
\emph{colored oriented graph}), 
we can associate to it a complex number which we shall denote $<\Gamma,\ \operatorname{col}>_n$ by the following
 construction.
 \begin{enumerate}
 \item Choose an edge $E_0$ of $\Gamma$ and cut $\Gamma$ open along $E_0$.
 \item Move by an isotopy $\Gamma$ so to put it in a $(1,1)$-tangle diagram and so that the two open strands (initially contained in $E_0$) are directed towards the bottom. 
 \item Assigning $\cap$ operators to the maximal points, $\cup$ operators to the minimal points, $R$-matrices to the crossing points and the Clebsch-Gordan operators $Y_a^{b,c}$ and $Y_{a,b}^c$ to the trivalent vertices as in \cite{KR}, 
we associate to the  diagram $D$ of $\Gamma$ obtained in (2) an operator 
$\operatorname{op}(D):V^{\operatorname{col}(E_0)}\to V^{\operatorname{col}(E_0)}$ 
and hence, by Schur's lemma, a scalar $\lambda(D)\in \mc$. 
 \item Define the scalar associated to $D$ as 
 $i(D)= \lambda(D)\qbin{2\operatorname{col}(E_0)+n}{2\operatorname{col}(E_0)+1}^{-1}$.
 \end{enumerate}
\begin{teo}\label{teo:invariance}
The scalar $i(D)$ is independent on all the choices of the above construction and is therefore an invariant $<\Gamma,\operatorname{col}>_n\in \mc$ of the colored oriented graph embedded in $S^3$.
\end{teo}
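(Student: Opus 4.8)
The plan is to show that $i(D)$ does not change under any of the elementary moves relating two diagrams obtained by the construction $(1)$--$(4)$, and that it does not depend on the choice of cutting edge $E_0$. Since any two diagrams of the same colored oriented framed graph in $S^3$ are related by a finite sequence of planar isotopies together with Reidemeister-type moves (for framed graphs: the framed Reidemeister moves R2, R3, the framed R1 together with the ``whirl'' move sliding a vertex past a crossing, and the moves that slide a maximum/minimum past a vertex or a crossing), it suffices to check invariance under each of these.

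\textbf{Step 1: invariance under the moves not involving $E_0$.} Planar isotopy invariance is automatic from the functoriality of the assignment $\operatorname{op}(D)$ (composition and tensor product of morphisms). The framed R2 and R3 moves hold because ${}_a^bR$ is a morphism of representations (equation \eqref{eq:Rdelta}) satisfying the Yang--Baxter equation \eqref{eq:YBE}, exactly as in \cite{KR}. The moves sliding a cup or cap past a crossing or past a vertex follow from \eqref{eq:Rdelta}, from the fact that $\cap_{a,b}$ and $\cup_{a,b}$ realize the duality (the zig-zag identities, implicit in the lemma giving \eqref{equation:cap} and \eqref{equation:cup}), and from the definition of the projectors $L^c_{a,b}$, $R^c_{a,b}$ together with Lemma \ref{lem:nonordered}, which guarantees that ``moving a leg of a $Y$-vertex'' is well defined. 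The move exchanging the two lower legs of a vertex past a crossing is precisely the content of the second relation in \eqref{eq:t}, and the framed R1 move multiplies $\operatorname{op}(D)$ by the twist scalar $\xi_n^{2t_a}$ of the first relation in \eqref{eq:t}; since the framing is assumed horizontal (blackboard framing), positive and negative kinks cancel in the isotopy class and no net twist factor survives, so $\lambda(D)$ is unchanged. Throughout, the decomposition identity \eqref{eq:decomposition} and the theta-value Lemma \ref{lem:thetagraph} are used to reduce any $\Theta$-shaped configuration created by a move to a scalar multiple of an edge, ensuring the morphism landing in $\operatorname{Hom}(V^{\operatorname{col}(E_0)},V^{\operatorname{col}(E_0)})$ is unchanged.

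\textbf{Step 2: independence of the cutting edge.} Here one compares $i(D)$ when $\Gamma$ is cut along $E_0$ versus along another edge $E_1$. The standard argument (as in \cite{KR}) is: by Schur's lemma $\operatorname{op}(D)=\lambda(D)\,\operatorname{id}_{V^{\operatorname{col}(E_0)}}$, so the scalar is computed by closing up the $(1,1)$-tangle into a circle, which replaces $\lambda(D)$ by $\lambda(D)\qbin{2\operatorname{col}(E_0)+n}{2\operatorname{col}(E_0)+1}$ via \eqref{eq:theta}; dividing by $\qbin{2\operatorname{col}(E_0)+n}{2\operatorname{col}(E_0)+1}$ as in step $(4)$ yields a quantity associated to the \emph{closed} diagram, manifestly independent of where it was opened. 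One must check that the closure operation is compatible with the orientation conventions: closing along an edge oriented one way versus the other differs by pre/post-composition with $\cup$ and $\cap$ operators, and the normalization constants \eqref{equation:cap}, \eqref{equation:cup} together with Lemma \ref{lem:thetagraph} are arranged precisely so that these factors cancel. Because the partial-trace interpretation of $\lambda(D)\qbin{2\operatorname{col}(E_0)+n}{2\operatorname{col}(E_0)+1}$ is symmetric in all edges of $\Gamma$, independence of $E_0$ follows.

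\textbf{Main obstacle.} The delicate point is bookkeeping of the scalar normalizations attached to orientation reversals: unlike the classical ($SU(2)$) case, the colors lie in $\mathbb C\setminus\frac12\mathbb Z$, the modules $V^a$ are not self-dual, and reversing an edge means replacing $a$ by $n-1-a$, which introduces the sign/root-of-unity factors $\xi_n^{-a(n-1)}$, $\xi_n^{(n-1)u}$ appearing in \eqref{eq:p} and in Lemma \ref{lem:nonordered}. One must verify that in every move these factors conspire to cancel (equivalently, that the ``bending'' operators are genuinely orientation-independent up to the prescribed complement-switching), and that the whole construction is consistent with the hypothesis that the framing spans an orientable surface so that the horizontal-framing assumption is legitimate. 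Once these local compatibility identities — all consequences of \eqref{equation:cap}, \eqref{equation:cup}, \eqref{eq:decomposition}, \eqref{eq:theta}, \eqref{eq:p}, and Lemmas \ref{lem:nonordered}, \ref{lem:thetagraph} — are in place, the proof is the routine move-by-move check above, entirely parallel to \cite{KR}.
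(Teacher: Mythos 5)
Your Step 1 is essentially the same (standard) argument the paper also invokes for isotopy invariance of the $(1,1)$-tangle operator, but your Step 2 has a genuine gap, and it sits exactly where the real content of the theorem lies. You claim that closing the $(1,1)$-tangle ``replaces $\lambda(D)$ by $\lambda(D)\qbin{2\col(E_0)+n}{2\col(E_0)+1}$ via \eqref{eq:theta}'', so that after dividing one gets a quantity attached to the closed diagram, manifestly independent of where it was opened. This fails on two counts. First, \eqref{eq:theta} is the value of a theta-shaped subgraph sitting on an edge, not of the closure of a strand. Second, and more seriously, closing the strand computes the quantum trace of $\lambda(D)\,\id_{V^{\col(E_0)}}$, and with the pairings \eqref{equation:cap}, \eqref{equation:cup} the loop value of a circle colored $a$ is $\sum_{i=0}^{n-1}\xi_n^{2(i-a)(n-1)}=0$: every module $V^a$ has vanishing quantum dimension at $\xi_n$. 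So the invariant of the closed diagram is identically zero and carries no information; this vanishing is precisely why the construction cuts the graph open and normalizes by $\qbin{2\col(E_0)+n}{2\col(E_0)+1}^{-1}$ (the analogue of the modified dimension $d(a)$ of \cite{GR}), and it is why independence of the cut edge is a genuine issue rather than a formality, as already in \cite{ADO} and \cite{GR}. Your closing sentence ``the partial-trace interpretation \dots is symmetric in all edges'' therefore begs the question.

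The missing idea, which is the paper's actual argument, is to cut $\Gamma$ open along \emph{both} $E_0$ and $E_1$, obtaining a $(2,2)$-tangle $D$ with $\op(D):V^{a}\otimes V^{b}\to V^{a}\otimes V^{b}$ ($a=\col(E_0)$, $b=\col(E_1)$). By Proposition \ref{prop:decomposition} and Schur's lemma, $\op(D)$ acts by scalars $h_l(D)$ on the Clebsch--Gordan summands $V^{l}$, and closing one strand or the other expresses $i(D_0)$ and $i(D_1)$ as $\sum_l h_l(D)$ times the normalized invariant of a theta graph colored $a,b,l$, cut open along its $a$-edge in one case and along its $b$-edge in the other. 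Lemma \ref{lem:thetagraph} shows these two normalized theta values coincide, whence $i(D_0)=i(D_1)$. Without this (or an equivalent comparison), your proposal does not establish independence of the cutting edge. A minor additional point: in Step 1 a framed R1 move does multiply the operator by the twist $\xi_n^{2t_a}$; invariance holds because diagrams of the same framed graph in blackboard framing are related by writhe-preserving moves, not because ``positive and negative kinks cancel'' in general -- but this is cosmetic compared with the Step 2 gap.
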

\begin{proof}{
Cut open $G$ along an edge $E_0$; the fact that $\lambda(D)$ (and hence $i(D)$) is an invariant up to isotopy of the colored framed $(1,1)$-tangle represented by $D$ is a standard consequence of the properties of representations of quantum groups and in particular of ${\mathcal U}_{\xi_n}(sl_2)$. 
So we need to prove that cutting $\Gamma$ open along a different edge, say $E_1$ and repeating the construction we get the same invariant.
\par
So let us cut $\Gamma$ open along $E_0$ and $E_1$ and, up to isotopy, put the result in a position of a $(2,2)$-tangle whose boundary strands are oriented towards the bottom and are included one in $E_0$ and one in $E_1$ both at the top and at the bottom. 
Let $D$ be a diagram of $\Gamma$ in such a position, 
$\operatorname{col}(E_0)=a$ and $\operatorname{col}(E_1)=b$. 
The operator represented by $D$ is 
$\operatorname{op}(D):V^{a}\otimes V^{b}\to V^{a}\otimes V^{b}$ 
and by Clebsch-Gordan decomposition and Schur's lemma there exist scalars $h_{a+b-k}(D)\in \mc, k\in \{0,1,\ldots, n-1\}$ such that $\operatorname{op}(D)$ restricted to the submodule $V^{a+b-k}$ of $V^{a}\otimes V^{b}$ is $h_{a+b-k}(D)\, \operatorname{id}$.
Let us call $i(D_0)$ and $i(D_1)$ the invariant computed out of $D$ by closing $E_1$ (i.e. cutting open $\Gamma$ along $E_0$) and closing $E_0$ (i.e. cutting $\Gamma$ open along $E_1$) respectively. Then it clearly holds: 
$$
i(D_0)=\sum_{l=a+b-n+1}^{l=a+b}h_l(D)\, i\big(\raisebox{-0.8cm}{
\psfrag{a}{$a$}\psfrag{b}{$b$}\psfrag{c}{$l$}
\includegraphics[width=0.8cm]{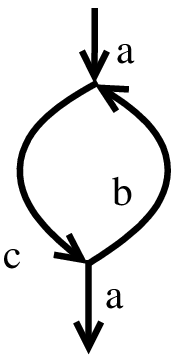}}\big), \ \ 
i(D_1)=\sum_{l=a+b-n+1}^{l=a+b}h_l(D)\,  i\big(\raisebox{-0.8cm}{
\psfrag{a}{$b$}\psfrag{b}{$l$}\psfrag{c}{$a$}
\includegraphics[width=0.8cm]{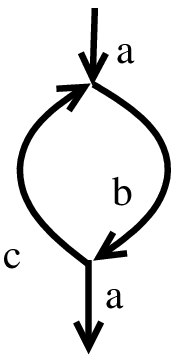}}\big).
$$
But then $i(D_0)=i(D_1)$ as a consequence of Lemma $\ref{lem:thetagraph}$.
}
\end{proof}
\begin{rem}
\begin{enumerate}
\item
We have
\begin{equation}
i\big(\raisebox{-0.5cm}{
\psfrag{a}{$a$}\psfrag{b}{$b$}\psfrag{c}{$c$}
\includegraphics[width=1.2cm]{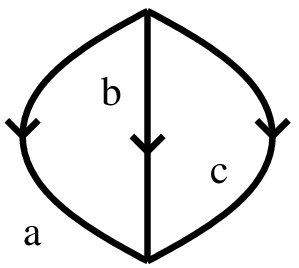}}\big)=1.  
\label{equation:theta}
\end{equation}
\item
Colored graphs include colored links, and for links the invariant defined above coincides with the colored Alexander invariant given in \cite{ADO} and discussed in \cite{GR}, \cite{M}.  
Lemma \ref{lem:thetagraph} gives a new proof for the independence of the string to cut to make a (1, 1)-tangle.  
This was first proved in \cite{ADO} by computation, and then refined and extended to more general settings in \cite{GR} by means of theoretical arguments.  
Comparing with the proof in \cite{GR}, we see that
$
\qbin{2a + n}{2a+1}^{-1}
$
corresponds to
$
d(a) 
$ 
in Definition 2.3 of \cite{GR} expressing the ``virtual degree'' of the representation $V^a$.  
\end{enumerate}
\end{rem}

\subsection{Face model for the invariant}
In this subsection, we construct a face model for the invariant $<\Gamma,\operatorname{col}>_n$ which in particular include the colored Alexander invariant defined in \cite{M} by the method already used in Section 6 of \cite{KR}. 
 
\par
\begin{figure}[htb]
$$
\begin{matrix}
\includegraphics[scale=0.25]{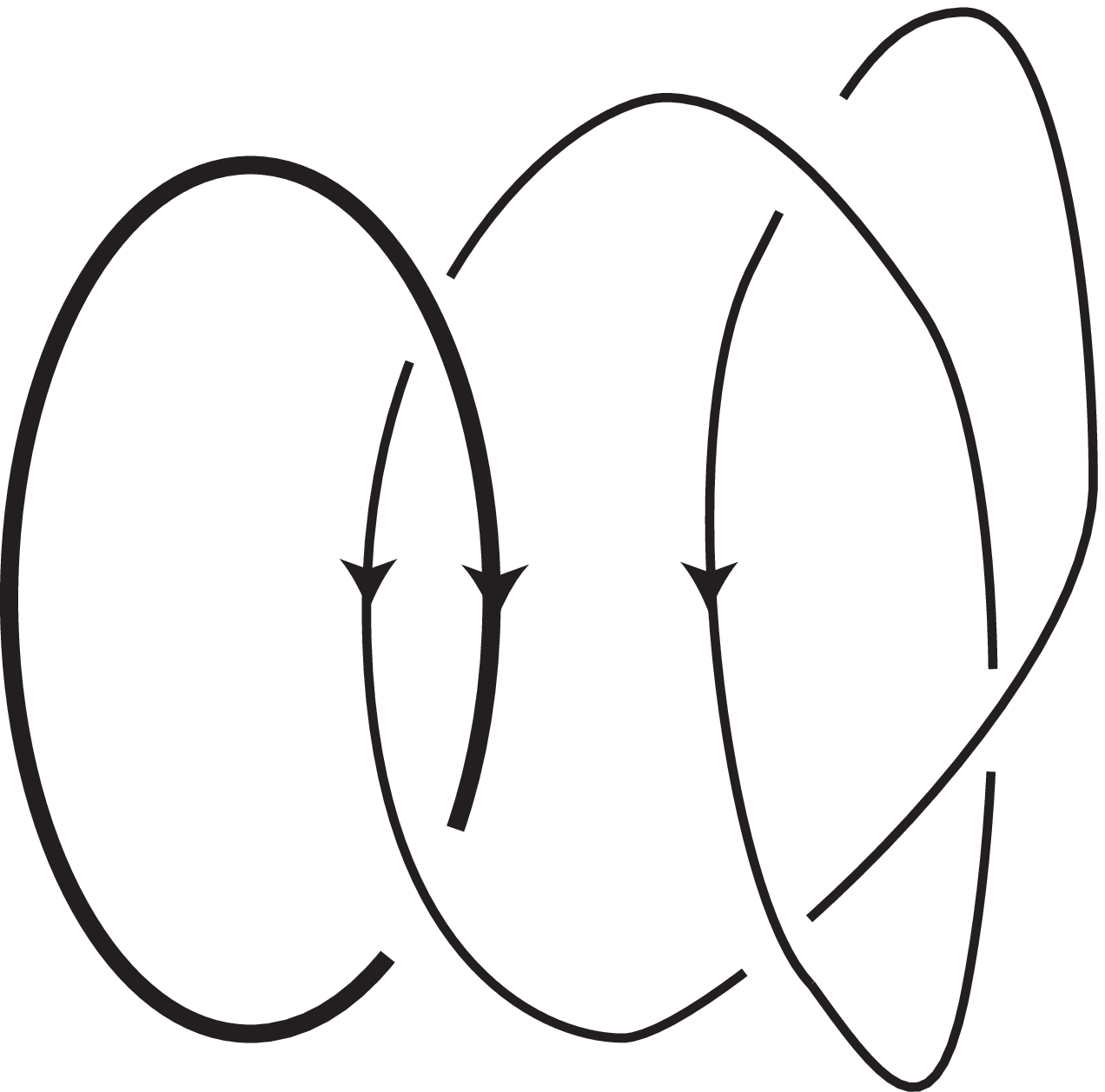}
\\
K_1 \qquad K_2
\\
L
\end{matrix}
\quad
\longrightarrow
\quad
\begin{matrix}
\includegraphics[scale=0.2]{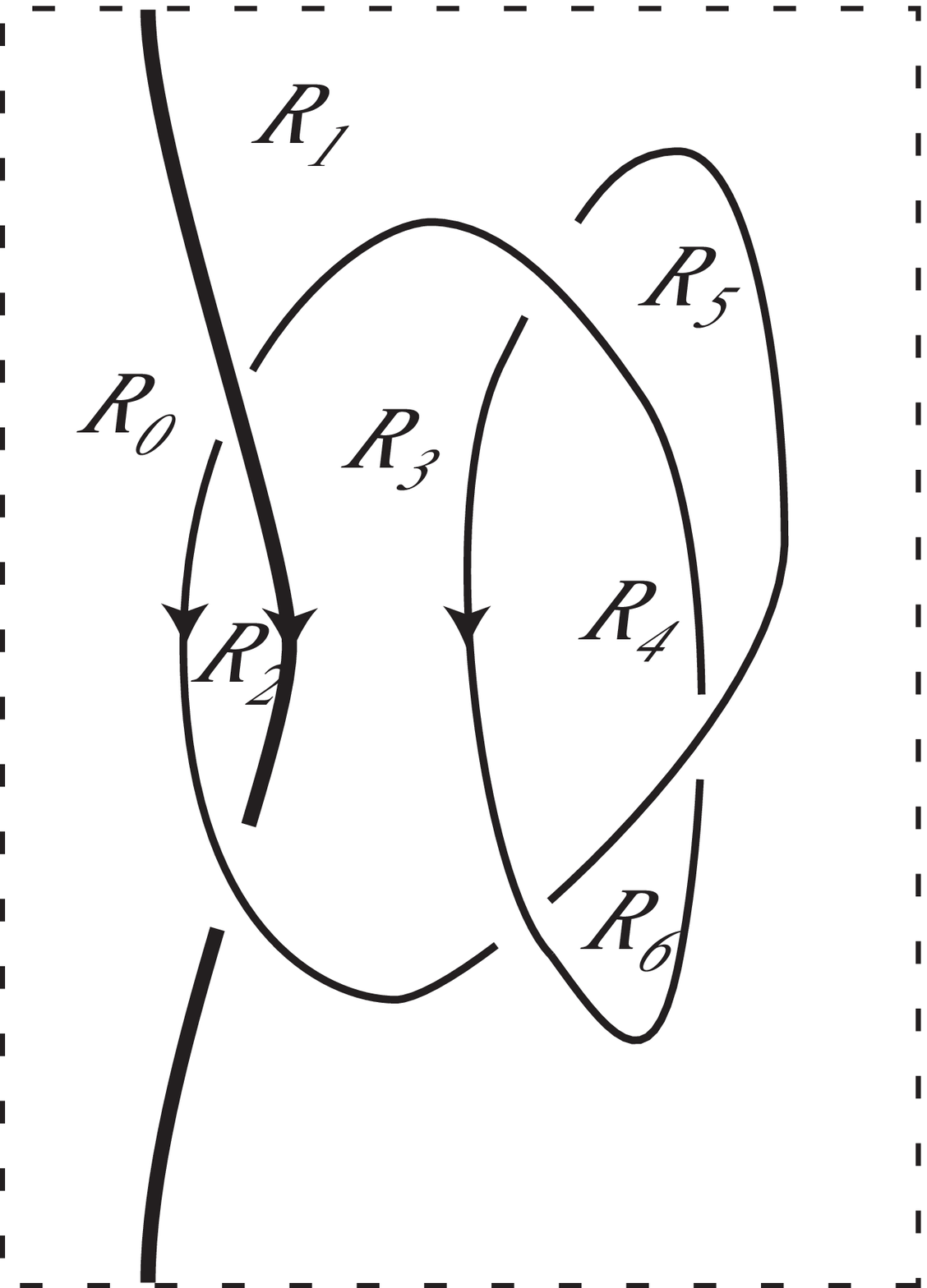}
\\
T_L
\end{matrix}
$$
\caption{A tangle diagram $T_L$ related to the link $L$ and its regions: the colors of the edges are $\lambda_i$ and if an arc is directed upwards the color is counted as $n-1-\lambda_i$.}
\label{figure:tangle}
\end{figure}
\par
Let $(\Gamma,\operatorname{col})$ be a colored, oriented, framed trivalent graph. Let us cut it open along the edge $E_1$ and put it in a $(1,1)$-tangle like position so that the two strands which were contained in $E_1$ are directed towards the bottom. The diagram $T_\Gamma$ just constructed splits the plane into \emph{regions} $R_0,\ldots, R_k$, where we let $R_0$ and $R_1$ be respectively the leftmost and the rightmost regions.
Let $a_0$, $a_1\in \mathbb{C}\setminus\frac{1}{2}\mathbb{Z}$ be complex numbers satisfying $a_0+ \operatorname{col}(E_1)-a_1\in \{0,1,\ldots, n-1\}$. 
We define a \emph{state} of $T_\Gamma$ as a mapping
$
\varphi : \{R_0,R_1, R_2, \cdots, R_d\}
\longrightarrow {\mathbb C}
$
which satisfies the following conditions.  
\begin{enumerate}
\item
\ $\varphi(R_0) = a_0$, $\varphi(R_1) = a_1$, 
\item
\ If $R_i$ and $R_j$ are adjacent along $e_k$,  $R_i$ is on the left of $e_k$ and $R_j$ is on the right of $e_k$, then 
$\varphi(R_i) + \operatorname{col}(e_k)$ (or $\varphi(R_i) + n-1-\operatorname{col}(e_k)$ if $e_k$ is oriented upwards) $ = \varphi(R_j) + l$, where $l$ is an integer and $0 \leq l \leq n-1$.   
\item
\ $\varphi(R_i)$ is not a half-integer for any $i$.  
\end{enumerate}
Note that the third condition is a condition for $a_0$. 
Let $Z_{a_0,a_1}(T_\Gamma)$ be the following state sum.
\begin{multline}
Z_{a_0, a_1}(T_\Gamma)
=
\!\!
\sum_{\begin{matrix}
\scriptstyle \varphi :\\
\scriptstyle\text{states}
\end{matrix}
}
\prod_{p\, :\, \text{maximum}}\!\!\!\!\!\!W_{\text{max}}(p)
\prod_{p\, :\, \text{minimum}}\!\!\!\!\!\!W_{\text{min}}(p)
\prod_{p\, :\, \text{crossing}}\!\!\!\!\!\!W_{\text{c}}(p)
\prod_{p\, :\, \text{vertex}}\!\!\!\!\!\!W_{\text{v}}(p)
\label{eq:statesum}
\end{multline}
where $W_{\text{max}}(p)$, $W_{\text{min}}(p)$, $W_{\text{crossing}}(p)$, $W_{\text{vertex}}(p)$ are given as follows (in the pictures we denote by $\lambda,\mu, \eta$ the colors of the edges of $\Gamma$ and by $a,b,c$ the states of the regions).  
\begin{equation*}
\setlength{\unitlength}{0.7mm}
\raisebox{-4mm}{$a$} \ \lambda \,
\begin{matrix}
p \\
\begin{picture}(20,15)
\thicklines
\put(10,5){\oval(20,20)[t]}
\put(0,5){\vector(0,-1){5}}
\put(20,5){\line(0,-1){5}}
\put(7, 3){$b$}
\end{picture}
\end{matrix}\ %
\underset{W_{\max}(p)}{\longrightarrow}\ %
\qbin{2a + n}{2a+1},  
%
\qquad
\setlength{\unitlength}{0.7mm}
\raisebox{4mm}{$a$}  \ \,
\begin{matrix}
\begin{picture}(20,15)
\thicklines
\put(10,10){\oval(20,20)[b]}
\put(0,10){\line(0,1){5}}
\put(20,15){\vector(0,-1){5}}
\put(7, 7){$b$}
\end{picture}
\\ p
\end{matrix}
\lambda
\ %
\underset{W_{\min}(p)}{\longrightarrow}\ %
\qbin{2b+n}{2b+1}^{-1}.    
\end{equation*}
(The left one corresponds to \eqref{eq:theta} and
the right one corresponds to \eqref{eq:decomposition}.)  
\begin{equation*}
\setlength{\unitlength}{0.5mm}
\begin{matrix}
\lambda \qquad\quad \mu
\\
\begin{picture}(30,30)
\thicklines
\put(30,30){\vector(-1,-1){30}}
\put(0,30){\line(1,-1){10}}
\put(20, 10){\vector(1, -1){10}}
\put(12,23){$d$}
\put(2,13){$a$}
\put(23,13){$b$}
\put(12,2){$c$}
\end{picture}
\end{matrix}
\ \ 
\underset{W_{\text{c}}(p) }{\longrightarrow}
\begin{matrix}
\xi_n^{t_a+t_b-t_c-t_d} 
\left\{\begin{matrix}
\mu & a & c \\
\lambda & b & d
\end{matrix}
\right\}_{\xi_n},
\end{matrix} 
\end{equation*}
\begin{equation*}
\quad
\setlength{\unitlength}{0.5mm}
\begin{matrix}
\lambda \qquad\quad \mu
\\
\begin{picture}(30,30)
\thicklines
\put(0,30){\vector(1,-1){30}}
\put(30,30){\line(-1,-1){10}}
\put(10, 10){\vector(-1, -1){10}}
\put(12,23){$d$}
\put(2,13){$a$}
\put(23,13){$b$}
\put(12,2){$c$}
\end{picture}
\end{matrix}
\ \ 
\underset{W_{\text{c}}(p) }{\longrightarrow}
\begin{matrix}
\xi_n^{-t_a-t_b+t_c+t_d} 
\left\{\begin{matrix}
\mu & a & c \\
\lambda & b & d
\end{matrix}
\right\}_{\xi_n}.
\end{matrix} 
\end{equation*}
\begin{equation*}
\setlength{\unitlength}{0.5mm}
\begin{matrix}
\lambda \qquad\quad \mu
\\
\begin{picture}(30,30)
\thicklines
\put(0,30){\vector(1,-1){15}}
\put(30,30){\vector(-1,-1){15}}
\put(15,15){\vector(0,-1){15}}
\put(14,28){$a$}
\put(2,15){$b$}
\put(28,15){$c$}
\end{picture}\\
\eta
\end{matrix}
\ \ 
\underset{W_{\text{v}}(p) }{\longrightarrow}
\left\{\begin{matrix}
\mu & \lambda & \eta \\
b & c & a
\end{matrix}
\right\}_{\xi_n},
\qquad
%
\setlength{\unitlength}{0.5mm}
\begin{matrix}
\eta
\\
\begin{picture}(30,30)
\thicklines
\put(15,15){\vector(1,-1){15}}
\put(15,15){\vector(-1,-1){15}}
\put(15,30){\vector(0,-1){15}}
\put(14,0){$a$}
\put(2,15){$b$}
\put(28,15){$c$}
\end{picture}\\
\lambda \qquad\quad \mu
\end{matrix}
\ \ 
\underset{W_{\text{v}}(p) }{\longrightarrow}
\qbin{2\eta+n}{2\eta+1}\, 
\left\{\begin{matrix}
b & \lambda & a \\
\mu & c & \eta
\end{matrix}
\right\}_{\xi_n}.
\end{equation*}
(The first two correspond to \eqref{eq:c6j} and \eqref{eq:c6jm}.) 

\begin{teo}[Face model for $<\Gamma,\operatorname{col}>_n$]\label{teo:facemodel}
Let 
$$
\widetilde Z_{a_0, a}(T_\Gamma) = 
\qbin{2 \operatorname{col}(E_1)+n}{2 \operatorname{col}(E_1) + 1}^{-1} \, 
Z_{a_0, a_1}(T_\Gamma).
$$  
Then $\widetilde Z_{a_0, a_1}(T_\Gamma)=<\Gamma,\operatorname{col}>_n$. 
In particular, if $\Gamma$ is a link, then 
$$
\dfrac{1}{n\,\sqrt{-1}^{\, n-1}} \, \widetilde Z_{a_0, a_1}(T_\Gamma)$$
is equal to the 
colored Alexander invariant in \cite{M}.  
\end{teo}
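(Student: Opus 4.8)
The plan is to show that, for every admissible choice of the parameters $a_0,a_1$, the state sum $Z_{a_0,a_1}(T_\Gamma)$ equals the scalar $\lambda(D)$ by which the operator $\operatorname{op}(D)$ acts on $V^{\col(E_1)}$. Since by Theorem \ref{teo:invariance} we have $<\Gamma,\col>_n=i(D)=\lambda(D)\,\qbin{2\col(E_1)+n}{2\col(E_1)+1}^{-1}$, and since $\widetilde Z_{a_0,a_1}(T_\Gamma)=\qbin{2\col(E_1)+n}{2\col(E_1)+1}^{-1}Z_{a_0,a_1}(T_\Gamma)$ by definition, the equality $Z_{a_0,a_1}(T_\Gamma)=\lambda(D)$ is exactly the assertion $\widetilde Z_{a_0,a_1}(T_\Gamma)=<\Gamma,\col>_n$; in particular it entails that the state sum does not depend on $a_0$ and $a_1$. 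Because $\operatorname{op}(D)$, hence $\lambda(D)$, is already known to be an invariant of the underlying $(1,1)$-tangle (Theorem \ref{teo:invariance}), it suffices to prove $Z_{a_0,a_1}(T_\Gamma)=\lambda(D)$ for one diagram $D=T_\Gamma$, and this I would do by a purely local computation; no separate proof of isotopy invariance of the state sum is needed, as it is inherited from that of $\operatorname{op}(D)$.

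\textbf{Slicing and fusion labelings.} First I would slice $D$ by horizontal lines so that each resulting slab contains exactly one elementary event — a maximum, a minimum, a positive or negative crossing, or a trivalent vertex — together with vertical strands. At a generic height the horizontal line meets $D$ transversally in finitely many points, each carrying an edge of $\Gamma$ with its colour and a sign recording whether the strand points up or down; the segments cut off on the line correspond bijectively to the regions met at that height, so a state $\varphi$ restricts to a labeling of those segments by elements of $\mc\setminus\frac12\mz$. The key observation is that conditions (2) and (3) in the definition of a state are precisely the conditions ensuring that the corresponding sequence of colours builds, via iterated Clebsch--Gordan inclusions $Y_c^{a,b}$ of Theorem \ref{teo:ClebschGordan} and their ``bent'' variants $Y_{a,b}^c$ of Proposition \ref{prop:Yprojectors}, a nonzero fusion tree realizing a summand of the iterated tensor product of the $V$'s crossing that height; and with the coefficient $\qbin{2c+n}{2c+1}^{-1}$ these fusion trees assemble, by \eqref{eq:decomposition}, into a resolution of the identity on that tensor product.

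\textbf{Local weights and the link case.} Next I would insert these resolutions of the identity just above and just below each elementary event, so that $\operatorname{op}(D)$ becomes a sum over all families of intermediate colours of products of matrix elements of the elementary operators in the fusion-tree bases; summing over intermediate colours is exactly summing over states $\varphi$. It then remains to identify each matrix element with the stated face weight: the weight $\qbin{2a+n}{2a+1}$ at a maximum is the $\theta$-evaluation \eqref{eq:theta} of Lemma \ref{lem:thetagraph}; the weight $\qbin{2b+n}{2b+1}^{-1}$ at a minimum is the normalizing coefficient of \eqref{eq:decomposition}; the crossing weights $\xi_n^{\pm(t_a+t_b-t_c-t_d)}\{\cdots\}_{\xi_n}$ are relations \eqref{eq:c6j} and \eqref{eq:c6jm}; and the vertex weights are $6j$-symbols, read off from the defining relation of Figure \ref{fig:6j} after rewriting the vertex maps by Lemma \ref{lemma:bend} and Proposition \ref{prop:Yprojectors}. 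This yields $Z_{a_0,a_1}(T_\Gamma)=\lambda(D)$. For the link case, the first part already gives $\widetilde Z_{a_0,a_1}(T_\Gamma)=<\Gamma,\col>_n$, which by the remark following Theorem \ref{teo:invariance} is the colored Alexander invariant up to normalization; to pin down the constant $\tfrac{1}{n\sqrt{-1}^{\,n-1}}=\tfrac1{\{n-1\}!}$ it is enough to evaluate both invariants on one example, say the $0$-framed unknot colored by $\lambda$, and match with the convention of \cite{M}.

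\textbf{Main obstacle.} The delicate point will be the local identifications in the previous step: matching each matrix element, including all orientation-dependent replacements $\lambda\leftrightarrow n-1-\lambda$, the sign factors coming from the duality pairings \eqref{equation:cap}--\eqref{equation:cup}, and the ribbon/twist factors $\xi_n^{t_\bullet}$ of \eqref{eq:t}, with the explicitly written weights $W_{\max},W_{\min},W_{\mathrm c},W_{\mathrm v}$, and verifying that the admissibility constraints (2)--(3) are exactly the support conditions of the relevant $6j$-symbols and Clebsch--Gordan coefficients. This is essentially bookkeeping, of the same type as in Section~6 of \cite{KR}, but it is where all the care is required.
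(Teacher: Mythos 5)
Your proof of the main equality $\widetilde Z_{a_0,a_1}(T_\Gamma)=<\Gamma,\col>_n$ follows essentially the same route as the paper: reduce to $Z_{a_0,a_1}(T_\Gamma)=\lambda(D)$ via Theorem \ref{teo:invariance}, slice the diagram by a height function, and match the local transition coefficients with Lemma \ref{lem:thetagraph}, equation \eqref{eq:decomposition}, the defining relation of the $6j$-symbols (and its variant of Figure \ref{figure:another6j}), and relations \eqref{eq:c6j}--\eqref{eq:c6jm}; the paper phrases this by tracking the morphisms $\op(\ell_t,s\vert_{\ell_t}):V^{a_1}\to V^{a_0}\otimes V^{\col(E_1)}$ across each elementary event rather than by inserting resolutions of the identity, but this is the same vertex--IRF argument of Section 6 of \cite{KR}, and inheriting isotopy invariance from $\op(D)$ is exactly what the paper does.

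For the last statement (the link case) there is, however, a genuine gap in your shortcut. Fixing the constant $\frac{1}{n\sqrt{-1}^{\,n-1}}$ by evaluating both invariants on a single example only works once you already know that $\widetilde Z_{a_0,a_1}(T_L)$ and the colored Alexander invariant of \cite{M} differ by a factor independent of the link \emph{and} of the colors; a priori, since both invariants arise as a cut-edge-dependent renormalization of a $(1,1)$-tangle scalar, their ratio could depend on the color $\lambda_1$ of the cut component, and the remark following Theorem \ref{teo:invariance} (which you invoke) is itself only an assertion whose justification is this very comparison. The paper closes this by a structural identification: the state sum $Z_{a_0,a_1}(T_L)$ equals the vertex-model operator $O^n_{T_L}(\lambda_1,\ldots,\lambda_r)$ of \cite{M} (the two diagrams of Figure \ref{figure:vertexIRF} represent the same scalar, the $R$-matrix having been matched with that of \cite{ADO} in Proposition \ref{prop:rmatrix}), and then the normalizations are compared directly with the definition in \cite{M}, using $\{n-1\}!=\sqrt{-1}^{\,n-1}n$. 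Replacing your one-example calibration by this direct comparison (or at least by a verification that the two normalizing factors agree as functions of $\lambda_1$ up to the constant $\{n-1\}!$) is what is needed to make the last claim complete.
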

\begin{figure}[hbt]
\qquad\ %
$\includegraphics[scale=0.22]{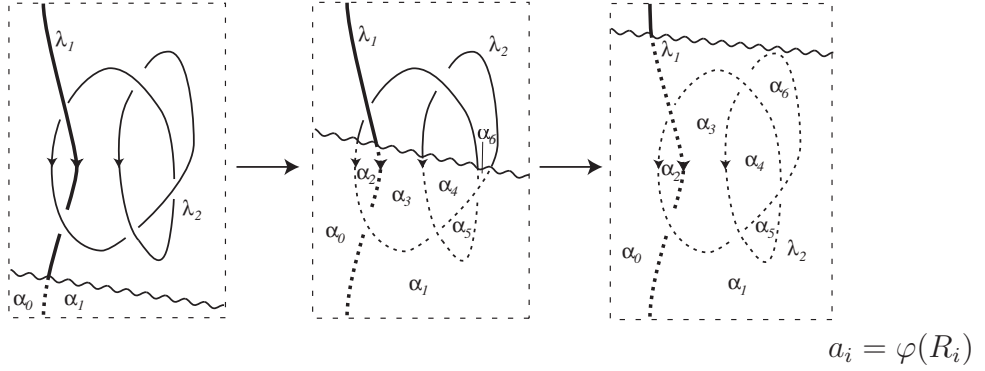}$
\hfill
\\
\hfill$a_i = \varphi(R_i)$
\caption{Vertex-IRF correspondence in \cite{KR}}
\label{figure:vertexIRF}
\end{figure}
\par
\begin{proof}{
The first statement can be proved as in Section 6 of \cite{KR} about the correspondence of vertex models and IRF models (see also \cite{C3}), we sketch here the basic ideas (see Figure \ref{figure:vertexIRF}).  
\par
As explained previously, the diagram $T_\Gamma$ induces an operator 
$$
\op(T_\Gamma):V^{\operatorname{col}(E_1)}\to V^{\operatorname{col}(E_1)}
$$ 
and hence a scalar (which we previously called $\lambda(T_{\Gamma})$). 
Fix a height function $t:T_\Gamma \to [0,1]$ and let now $\ell_t$ be the horizontal at height $t$. 
Intersecting $T_{\Gamma}$ with a generic $\ell_t$, we get an ordered sequence of colors $\operatorname{col}(E_{i_1})$, $\operatorname{col}(E_{i_2})$, $\ldots$, $\operatorname{col}(E_{i_k})$ (where $E_{i_1},\ldots, E_{i_k}$ are the edges of $T_{\Gamma}$ intersecting $\ell_t$, read from left to right); if moreover a state $s$ is fixed on $T_{\Gamma}$ then there is an induced sequence of colors $c_0,\ldots, c_k$ of the segments composing $\ell\setminus \ell\cap T_\Gamma$, such that in particular $c_0=a_0$ and $c_k=a_1$; we will denote this sequence $s\vert_{\ell_t}$. 
Therefore, for each line $\ell$ and state $s$ we can consider the morphism $\op(\ell,s\vert_\ell):V^{a_1}\to V^{a_0}\otimes V^{\operatorname{col}(E_1)}$ defined as follows. 
First define a map $\op_{bottom}(\ell,s\vert_\ell):V^{a_1}\to V^{a_0}\otimes V^{\operatorname{col}(E_{i_1})}\cdots V^{\operatorname{col}(E_{i_k})}$ as the composition $Y^{c_0,\col(E_{i_1})}_{c_1}\circ\cdots\circ Y^{c_{k-2},\col(E_{i_{k-1}})}_{c_{k-1}} \circ Y^{c_{k-1},\col(E_{i_k})}_{a_1}$.  
Then let $\op_{top}(\ell,s\vert_\ell):V^{\operatorname{col}(E_{i_1})}\cdots V^{\operatorname{col}(E_{i_k})}\to V^{\operatorname{col}(E_1)}$ be defined as explained in the previous section using the part of $T_\Gamma$ lying above $\ell$.
Then let $\op(\ell,s\vert_\ell)= (id_{a_0}\otimes \op_{top}(\ell,s\vert_\ell))\circ \op_{bottom}(\ell,s\vert_\ell)$.
\par
To prove the theorem it is sufficient to prove that if $t_1$ and $t_2$ are two levels such that the preimage of the interval $[t_1,t_2]$ by the height function contains exactly only one extremum or trivalent vertex or crossing of $T_\Gamma$, then $\op(\ell_{t_1},s\vert_{\ell_{t_1}})=\sum_{s'} c(s')\op(\ell_{t_2},s'\vert_{\ell_{t_2}})$ where $s'$ ranges over all the states $s'$ of the subdiagram of $T_\Gamma$ lying between $\ell_{t_1}$ and $\ell_{t_2}$ and such that $s'\vert_{\ell_{t_1}}=s\vert_{\ell_{t_1}}$.
This is sufficient because following these equalities while $t$ goes from $0$ to $1$ we get $\op(\ell_0,s\vert_{\ell_0})=Z_{a_0,a_1}(T_\Gamma)\, \op(\ell_1,s\vert_{\ell_1})$ 
where both $s\vert_{\ell_0}$ and $s\vert_{\ell_1}$ are the sequence $(a_0,a_1)$ and by construction 
$\op(\ell_0,s\vert_{\ell_0})
=
id_{a_0}\otimes (\lambda(T_\Gamma) \operatorname{Id}_{\operatorname{col}(E_1)})\circ Y_{a_1}^{a_0,\operatorname{col}(E_1)}$ 
and 
$\op(\ell_1,s\vert_{\ell_1})=Y^{a_0,\operatorname{col}(E_1)}_{a_1}$, 
so that $Z_{a_0,a_1}(T_\Gamma)=\lambda(T_\Gamma)$.
\par
So to conclude, the reader can check that the coefficients $c(s)$ associated to maxima are those computed in Lemma \ref{lem:thetagraph}, those associated to minima are computed in equation \ref{eq:decomposition}, those associated to a vertex with one leg on the bottom of the picture are $6j$-symbols (by the definition of $6j$-symbols), those associated to a vertex with two legs on the bottom come from equality expressed in Figure \ref{figure:another6j}, and finally those associated to crossings come from equations \eqref{eq:c6j} and \eqref{eq:c6jm}.

To prove the last statement let us note that the left diagram  of Figure \ref{figure:vertexIRF} represents the scalar operator $O_{T_L}^n(\lambda_1, \cdots, \lambda_r) : V_{\lambda_1} \to V_{\lambda_1}$ defined by the vertex model in \cite{M}.  
On the other hand, the right diagram of Figure \ref{figure:vertexIRF}
represents $Z_{a_0, a_1}(T_L)$.  
Hence  $Z_{a_0, a_1}(T_L) = O_{T_L}^n(\lambda_1, \cdots, \lambda_r)$  and, by comparing with the definition  in \cite{M}, 
$\frac{\widetilde Z_{a_0, a_1}(T_L)}{\sqrt{-1}^{\,n-1}\, n}$ 
is equal to the colored Alexander invariant since 
$\{n-1\}! = \sqrt{-1}^{\,n-1}\, n$.   
}
 \end{proof}
 %
\par\noindent
\begin{rem}
\begin{enumerate}
\item
(Independence of the choice of $a_0$ and $a_1$)
As a corollary of Theorem \ref{teo:facemodel} the scalar $\tilde{Z}_{a_0, a_1}(T_L)$ does not depend on $a_0$ and $a_1$. 
\item
(Face model for the Kashaev invariant)
We constructed the above face model assuming that no color is a half integer.  
If we consider the case of a link and let its colors $\lambda_i$ tend to $(n-1)/2$, we have 
$$
\lim_{\lambda\to(n-1)/2}
\qbin{2\lambda+n}{2\lambda+1}=
(-1)^{n-1}
$$ 
and, by using this limit, we know that
$
\lim_{\lambda_1, \cdots, \lambda_r\to\frac{n-1}{2}}
(-1)^{n-1} \, \tilde Z_{a_0, a_1}(T_L)
$
is equal to the Kashaev invariant.  
Therefore,  as long as all the colors of the regions in each state are not half-integers (and this is satisfied for a generic choice of $a_0$ and $a_1$) every term in the state sum is well-defined, while the  face model introduced in \cite{KR} is not well-defined if the color of some component is $\frac{n-1}{2}$.  
\item
(Face model for the classical Alexander invariant)
The case $n=2$ corresponds to the classical Alexander polynomial and the Conway potential function for knots and links.  
For these invariants, a face model was already constructed in \cite{V} by using the representation theory of quantum supergroup ${gl(1|1)}$ from another point of view.    
\end{enumerate}
\label{rem:kashaev}
\end{rem}
\appendix
\section{Proofs of algebraic statements}
\subsection{Proof of Theorem \ref{teo:ClebschGordan}}\label{app:Clebsch-Gordan}
The coefficient of $e_r^a \otimes e_p^b$ in
$$
\Delta(E) \, 
\left(
\sum\limits_{u+v=a+b-c} C^{a,b,c}_{u,v,0}\, e^a_u\otimes e^b_v\right)
$$
is $[r+1]  \xi_n^{b-p}  C_{r+1, p, 0}^{a, b, c}+
[p+1]  \xi_n^{-a+r}  C_{r,p+1, 0}^{a, b, c}$ which is equal to:
\begin{multline*}
(-1)^p \, \sqrt{-1}^{c-a-b} \, \xi_n^{\frac{(p+1)(2b-p) -( r+1)(2a-r)}{2}}  
\qquad\qquad\qquad\qquad\qquad
\\
\qquad\qquad\qquad
\qbin{2c}{a+b+c-(n-1)}\,\qbin{a+b-c}{r} \, 
\left(
[a+b-c-r]
-
[p+1]
\right).  
\label{eq:deltae}
\end{multline*}
This is $0$ because $a + b - c = r + p + 1$.
The coefficient of $e_r^a \otimes e_p^b$ in
$$
\Delta(F) \, 
\left(
\sum\limits_{u+v-t=a+b-c} C^{a,b,c}_{u,v,t}\, e^a_u\otimes e^b_v
\right)
$$
is $[2a-r+1]  \xi_n^{b-p}  C_{r-1, p, t}^{a, b, c}+
[2b-p+1]  \xi_n^{-a+r} C_{r,p-1, t}^{a, b, c}$, which after shifting the summation indices in  Formula \eqref{equation:QCGC} becomes:
\begin{multline*}
(-1)^{p-t} \, \sqrt{-1}^{c-a-b} \, \xi_n^{\frac{p(2b-p+1) -r(2a-r+1)}{2}} \,
\qbin{2c}{2c-t}^{-1} \, \qbin{2c}{a+b+c-n+1}\, 
\\
\Big(
[2a-r+1] \, \xi_n^{{a+b-r-p+1}}
\qquad\qquad\qquad\qquad
\qquad\qquad\qquad\qquad
\\
\qquad
\sum_{z+w=t+1,\ z\geq 1}
(-1)^{z-1} 
\xi_n^\frac{(2z-t-2)(2c-t+1)}{2} 
\left[\begin{matrix}
a+b-c \\ r-z
\end{matrix}
\right] \!\!
\left[\begin{matrix}
2a-r+z \\ 2a-r+1
\end{matrix} \right]\!\!
\left[\begin{matrix}
2b-p+w \\ 2b-p
\end{matrix} 
\right]
\\
-
[2b-p+1]  \xi_n^{-(a+b-r-p+1)}
\qquad\qquad\qquad\qquad
\qquad\qquad\qquad\qquad
\\
\qquad
\sum_{z+w=t+1,\  w\geq 1} \!\!\!\!\!\!\!\!
(-1)^z \, 
\xi_n^\frac{(2z-t)(2c-t+1)}{2} 
\left[\begin{matrix}
a+b-c \\ r-z
\end{matrix}
\right]\!\!\!
\left[\begin{matrix}
2a-r+z \\ 2a-r
\end{matrix} \right]\!\!\!
\left[\begin{matrix}
2b-p+w \\ 2b-p+1
\end{matrix} 
\right] \!\!
\Big)
\end{multline*}
\begin{multline*}
=
(-1)^{p-t-1} \sqrt{-1}^{c-a-b}  \xi_n^{\frac{p(2b-p+1) -r(2a-r+1)}{2}} 
\qbin{2c}{2c-t}^{-1} \qbin{2c}{a+b+c-n+1}
\\
\sum_{z+w=t+1}\, 
(-1)^z \, 
\xi_n^\frac{(2z-t)(2c-t+1)}{2} \,
\left[\begin{matrix}
a+b-c \\ r-z
\end{matrix}
\right] \,
\left[\begin{matrix}
2a-r+z \\ 2a-r
\end{matrix} \right]\,
\left[\begin{matrix}
2b-p+w \\ 2b-p
\end{matrix} 
\right]
\\
\left(
\xi_n^{-(a+b-r-p+1)}\,
{[w]} 
+
\xi_n^{{a+b-r-p-2c+t}}\,
{[z]}
\right)
\end{multline*}
which is easily seen to be equal to $[2c-t]C^{a,b,c}_{r,p,t+1}$.
Hence we have:
$$\Delta(F) \left(\sum_{u+v=a+b-c+t} C^{a,b,c}_{u,v,t} e^a_u\otimes e^b_v
\right)
=
[2c-t] 
\sum_{r+p = a+b-c+t+1}C_{r,p,t+1}^{a, b, c}
e_r^a \otimes e_p^b.  
$$
These relations imply that the subspace of 
$V^a \otimes V^b$ 
spanned by $$
\sum_{u+v=a+b-c} \!\!\!\! C^{a,b,c}_{u,v,0}\, e^a_u\otimes e^b_v,
\sum_{u+v=a+b-c+1}  \!\!\!\! C^{a,b,c}_{u,v,1}\, e^a_u\otimes e^b_v,
\cdots,\ %
\sum_{u+v=a+b-c+n-1}  \!\!\!\! C^{a,b,c}_{u,v,n-1}\, e^a_u\otimes e^b_v
$$
is isomorphic to the highest weight module with the highest weight $c$ and $Y_c^{a,b}$ is a ${\mathcal U}_{\xi_n}(sl_2)$ module map.  
The unicity statement is a consequence of Schur's lemma and of Proposition \ref{prop:decomposition}.\qed
\subsection{Proof of Lemma \ref{lemma:bend}}\label{app:bend}
It is sufficient to prove the identity for a single three tuple $u,v,t$ because the space of morphisms from 
$V^a\otimes V^b$ to $V^c$ is at most one dimensional (this is a consequence of Proposition \ref{prop:decomposition} and of the fact that $V^c$ and $V^{c+i}$ are never isomorphic if $i\in \{0$, $\cdots$, $2n-1\}$).
\par
Therefore let us fix $u=0$, $v=a+b-c$ and $t=0$.
Then the above equality becomes
\begin{multline}
\sqrt{-1}^{b+a-c-n+1} (-1)^{a+b-c} \xi_n^{-a(n-1)} 
\xi_n^{\frac{-(n-1)(n-2a)}{2}} 
\!\!\!\!\! \!\!
\sum_{z+w=a+b-c} 
\!\!\!\!\! \!\!
(-1)^{z} 
\xi_n^{\frac{(2z-a-b+c)(b+c-a+1)}{2}} 
\\
\qbin{n-1+c-a-b}{c-a-b+z}
\qbin{n-1-2a+z}{n-1-2a}
\qbin{c+a+b-z}{2c}
=
\\[12pt]
\sqrt{-1}^{a+b-c-n+1}(-1)^{n-1-a-b+c}
\xi_n^{(n-1)(c-a)}
\xi_n^{\frac{(n-1-a-b+c)(n-1+a-b-c+1)}{2}}\!
\qbin{2a}{\!\!\!\!c+a-b\!\!\!\!}.  
\end{multline}
But the summands on the left hand side are non-zero only for $z = a+b-c$, therefore, 
after some simplification the equality reduces to:
$$
\qbin{n-1-a+b-c}{n-1-2a}
=
\qbin{2a}{c+a-b}.  
$$

\qed

\subsection{Proof of Lemma \ref{lem:thetagraph}}\label{app:thetaval}
By Schur's lemma the diagram represents a multiple of $Id_a$: thus it is sufficient to compute its action on $e^a_0$. Using Proposition \ref{prop:Yprojectors}, we need to prove:
 $$
 \qbin{2a+n}{2a+1}
 =
 \sum_{t=0}^{n-1+c-b-a} 
 C^{b, n-1-c,n-1-a}_{a+b-c+t,n-1-t, n-1}\,
  C^{c, n-1-b, a}_{t,n-1+c-a-b-t,0}.  
 $$
 By Lemma \ref{lemma:bend} it holds $C^{b,n-1-c,n-1-a}_{n-1-v,n-1-t,n-1}=\xi_n^{(n-1)(n-1-b-v-a)}C^{n-1-c,a,n-1-b}_{n-1-t,0,v}$; moreover noting that if $v=0$ in formula \eqref{equation:QCGC}, the sum reduces to the only term with $z=t$, we have: $$
\begin{aligned}
&\sum_{t+v = c+n-1-b-a}
C_{t,v,0}^{c, n-1-b,a}\xi_n^{(n-1)(n-1-b-v-a)}C^{n-1-c,a,n-1-b}_{n-1-t,0,v}
\\[12pt]&
=
\sum_{t+v = c+n-1-b-a}\!\!\!\!\!\!\!\!\!\!
\Big(\sqrt{-1}^{a+b-c-n+1}(-1)^{v}
\\&
\qquad\qquad\qquad\qquad
\xi_n^{\frac{v(2(n-1-b)-v+1)-t(2c-t+1)}{2}}
\qbin{2a}{\!\! a+c-b\!\!} \!\!
\qbin{\!\! c+n-1-b-a\!\!}{t}
\!\Big)
\\&
\Big(\sqrt{-1}^{c-a-b}\xi_n^{(n-1)(n-1-b-v-a)}
\xi_n^{\frac{(t-n+1)(n-2c+t)}{2}}\qbin{2n-2-2b}{2n-2-2b-v}^{-1}
\\&\qquad\qquad\quad
\qbin{2n-2-2b}{n-1-c-b+a}\,
\xi_n^{\frac{v(2(n-1)-2b-v+1)}{2}}
\qbin{2(n-1)-c-a-b}{n-1-2c+t}\Big)
\end{aligned}
$$
Simplifying the above formula and applying to the last binomial the third of the identities recalled at the beginning of Subsection \ref{sub:1}, this reduces to:
$$
\begin{aligned}
&
=
({-1})^{a+b-c} \, 
\xi_n^{(n-1-a-b+c)(n+a-b-c)} \, 
\qbin{2a}{a-b+c}
\\
&
\qquad\sum_{t=0}^{n-1-a-b+c}
\xi_n^{-2(a+1)t} \, 
\qbin{2c-t}{a+b+c-n+1} \, \qbin{n-1+a-b-c+t}{n-1+a-b-c} \,
\\[12pt]
&=
({-1})^{a+b-c} \, 
\qbin{2a}{a-b+c}
\qbin{a-b+c+n}{2a+1}
=
\qbin{2a+n}{2a+1}
\end{aligned}
$$
where the first equality is proved by using the relation \eqref{eq:c} given in Lemma \ref{lemma:relation}.   \qed

\subsection{\bf Proof of Theorem \ref{theo:6j}}\label{app:theo6j}
Using the value of the theta graph \eqref{eq:theta}, 
we have the relation in Figure \ref{figure:another6j}.   
This gives the following expression of 
the quantum $6j$-symbol.  
\begin{figure}[htb]
$$
\begin{matrix}
a\qquad\quad f \\
\hspace{-6mm}
\includegraphics[scale=0.4]{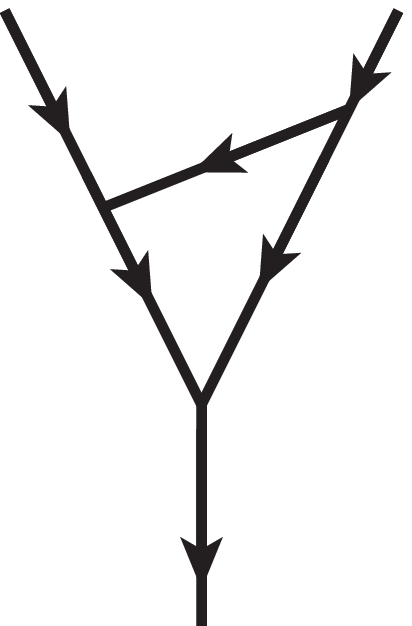}
{\hskip-5mm c}
\end{matrix}
\hspace{-15mm} e
\hspace{4mm}\raisebox{8mm}{\it{b}}
\hspace{3mm} d
\ %
=
\left\{
\begin{matrix}
a & b & e \\
d & c & f
\end{matrix}
\right\}_{\xi_n}
\!\!\!\!
\begin{matrix}
a \qquad\quad f \\
\hspace{-4mm}
\includegraphics[scale=0.4]{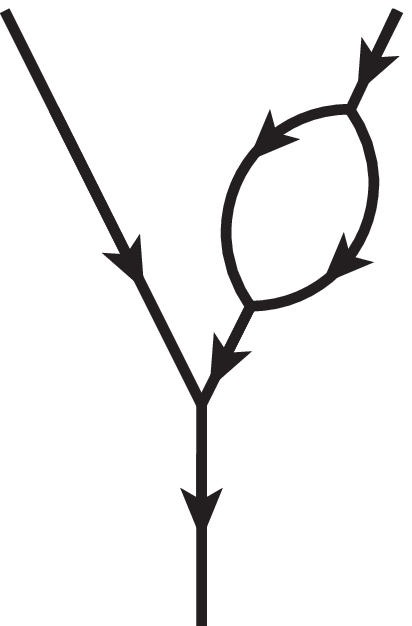}
\hspace{-1cm}\raisebox{2cm}{{\it b}}
\hspace{8mm}\raisebox{15mm}{{\it d}}
\hspace{-8mm}\raisebox{10mm}{{\it f}}
\hspace{7mm}{}
{\hskip-8mm c}
\end{matrix}
\quad
=
{\qbin{2f+n}{2f+1}} \, 
\left\{
\begin{matrix}
a & b & e \\
d & c & f
\end{matrix}
\right\}_{\xi_n}
\!\!\!\!\!\!\!\!
\begin{matrix}
a \qquad\qquad f \\
\includegraphics[scale=0.4]{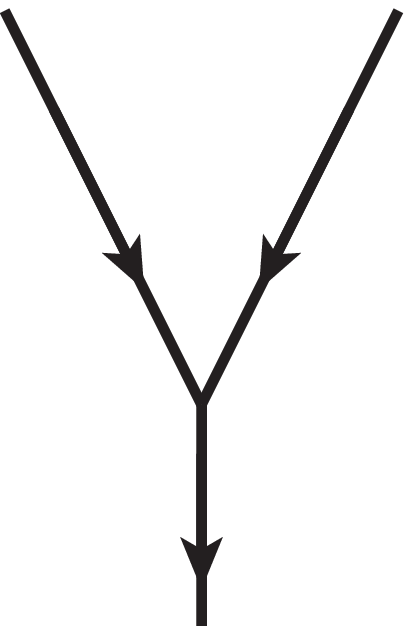}
{\hskip-5mm c}
\end{matrix}.
$$
\caption{Another expression of the quantum $6j$-symbol} 
\label{figure:another6j}
\end{figure}
\begin{multline}
\qbin{2f+n}{2f+1}  
\left\{
\begin{matrix}
a& b & e \\
d & c & f
\end{matrix}
\right\}_{\xi_n}
=
\\
\left(C_{m_2,m_1,m_3}^{a, f, c}\right)^{-1} 
\!\!\!\!
\sum_{m_4, m_5, m_6}
\!\!\!\!
C_{n-1-m_4,n-1-m_5,n-1-m_1}^{n-1-d,n-1-b,n-1-f}
C_{m_2,m_5,m_6}^{a,b,e} 
C_{m_6,m_4,m_3}^{e,d,c}.
\end{multline}
In the above formula we used Proposition \ref{prop:Yprojectors}.
Let us put $m_1 = 0$, $m_3 = 0$, $m_4 = \alpha$, 
then $m_2 = a+f-c$, $m_5 = b+d-f-\alpha$, $m_6 = e+d-c-\alpha$.  
Using \eqref{equation:QCGC}, letting $\{x, x-h\} = \prod_{k=0}^{h-1} \{x-k\}$, and applying Lemma \ref{lemma:bend} to $C_{n-1-m_4,n-1-m_5,n-1-m_1}^{n-1-d,n-1-b,n-1-f}$, we compute the quantum $6j$-symbol as follows.  
In the computation, $n_1=n-1$.  
\begin{equation*}
\qbin{2f+n}{2f+1} 
\left\{
\begin{matrix}
a & b & e \\
d & c & f
\end{matrix}
\right\}_{\xi_n}
\qquad\qquad\qquad\qquad\qquad\qquad\qquad\qquad\qquad\qquad\qquad\quad
\end{equation*}
\begin{equation*}
\begin{aligned}
&=
\left(C_{m_2,m_1,m_3}^{a, f, c}\right)^{-1}
\!\!\!\!\!
\sum_{m_4, m_5, m_6}\!\!\!\!
(\xi_n^{-(f-d+\alpha)(n-1)}\, C_{n-1-m_5,m_1,m_4}^{n-1-b,f,d}) \,
C_{m_6,m_4,m_3}^{e,d,c} \,
C_{m_2,m_5,m_6}^{a,b,e} 
\end{aligned}
\end{equation*}
\begin{equation*}
\begin{aligned}
&=
\Big( \sqrt{-1}^{-B_{afc}}\xi_n^{-\frac{B_{afc}}{2}(B_{acf}+1)}\qbin{2c}{\!\!\!A_{afc}-n_1\!\!\!}\Big)^{-1}
\sum_{\alpha=0}^{B_{dec}}\Big(\xi_n^{-n_1(f-d+\alpha)}\sqrt{-1}^{B_{bdf}-n_1}
\\&
\xi_n^{\frac{(B_{bdf}-n_1-\alpha)(n-\alpha-B_{bfd})}{2}}
\!\!\!\!\!\!\!
\dfrac{\{\alpha\}!}{\{2d,2d-\alpha\}}
\dfrac{\{2d,B_{dfb}\}}{\{B_{bdf}\}!}
\xi_n^{\frac{\alpha(2d-\alpha+1)}{2}}
\dfrac{\{n_1-B_{bfd},n_1-B_{bfd}-\alpha\}}{\{\alpha\}!}
\!\Big)
\\&
\Big(\!\sqrt{-1}^{-B_{edc}}(-1)^{\alpha}\xi_n^{\frac{\alpha(2d-\alpha+1)-(B_{edc}-\alpha)(B_{ecd}+\alpha+1))}{2}}
\dfrac{\{2c,A_{edc}-n_1\}\{B_{edc}\}!}{\{n_1-B_{edc}\}!\{B_{edc}-\alpha\}!\{\alpha\}!}\Big)
\\&
\Big(\sqrt{-1}^{-B_{abe}}(-1)^{b+c-e-f}
\xi_n^{\frac{(B_{bdf}-\alpha)(B_{bfd}+\alpha+1)-B_{afc}(B_{acf}+1)}{2}} 
\\&
\qquad\qquad
\dfrac{\{B_{edc}-\alpha\}!}{\{2e,B_{ecd}+\alpha\}}
\dfrac{\{2e,A_{abe}-n_1\}}{\{n_1-B_{abe}\}!}
\sum_{\substack{z+w=B_{dec}-\alpha,\\ B_{dec}-B_{bdf}\leq z\leq B_{afc}}}
\!\!\!\!\!\!\!\!\!\!\!\!
 (-1)^{z}\xi_n^{\frac{(z-w)(B_{ecd}+\alpha+1)}{2}}
\\&
\dfrac{\{B_{abe}\}!}{\{B_{afc}-z\}!\{b+c-e-f+z\}!}
\dfrac{\{B_{acf}+z,B_{acf}\}}{\{z\}!}
\dfrac{\{B_{bfd}+B_{dec}-z,B_{bfd}+\alpha\}}{\{B_{dec}-\alpha-z\}!}
\Big)
\end{aligned}
\end{equation*}
\begin{equation*}
\begin{aligned}
&=
\sqrt{-1}^{B_{afc}+B_{bdf}-n_1-B_{edc}-B_{abe}}
(-1)^{b+c-e-f}
\xi_n^{-n_1(f-d)}
\qbin{2c}{\!\!A_{afc}-n_1\!\!}^{-1}
\\& 
\sum_{\alpha=0}^{B_{dec}}\!
\xi_n^{\alpha+\alpha(2d-\alpha+1)+\frac{(B_{bdf}-n_1-\alpha)(n-\alpha-B_{bfd})-(B_{edc}-\alpha)(B_{ecd}+\alpha+1)+(B_{bdf}-\alpha)(B_{bfd}+\alpha+1)}{2}}
\\
&
\dfrac{\{B_{bfd}+\alpha,B_{bfd}\}\{2d-\alpha,B_{dfb}\}\{2c,A_{edc}-n_1\}\{B_{edc}\}!^2\{B_{abe}\}!^2\{2e,A_{abe}-n_1\}}
{\{B_{bdf}\}!\, \{n_1\}!^2\,\{\alpha\}!\,\{2e,B_{ecd}+\alpha\}}
\\&
\sum_{\substack{z+w=B_{dec}-\alpha,\\ B_{dec}-B_{bdf}\leq z\leq B_{afc}}}
\!\!\!\!\!\!\!\!\!\!\!\!\!\!\!\!\!
 (-1)^{z}\xi_n^{\frac{(z-w)(B_{ecd}+\alpha+1)}{2}}
\!\!\!\!\!
\dfrac{\{B_{acf}+z,B_{acf}\}\{B_{bfd}+B_{dec}-z,B_{bfd}+\alpha\}}{\{B_{afc}-z\}!\{B_{bdf}-B_{dec}+z\}!\{z\}!\{B_{dec}-\alpha-z\}!}
\end{aligned}
\end{equation*}
\begin{equation*}
\begin{aligned}
&
=
\sqrt{-1}^{B_{afc}-B_{edc}+B_{bdf}-B_{abe}}  
 (-1)^{n_1+b+c-e-f}  
\xi_n^{\frac{B_{bdf}(B_{dfb}+1)-B_{edc}(B_{ced}+1)}{2}}  \, 
\\&
\quad
\left[\begin{matrix}
2c \\ A_{afc}-n_1
\end{matrix}
\right]^{-1}\, 
\dfrac{\{B_{edc}\}!^2 \,\{B_{abe}\}!^2\, \{2c, A_{edc}-n_1\} \, \{2e, A_{abe}-n_1\}}
 {\{B_{bdf}\}!  \, (\{n_1\}!)^2 }\,
\\[-2pt]& 
\qquad
\sum_{\alpha=0}^{B_{dec}}
\!\!\!\!
\sum_{\substack{z+w=B_{dec}-\alpha,\\ B_{dec}-B_{bdf}\leq z\leq B_{afc}}}
\!\!\!\!\!\!\!\!\!\!\!\!
(-1)^z \, \xi_n^{\frac{(z-w)(B_{ced}+\alpha+1)}{2}} \,
\dfrac{\{B_{acf}+z, B_{acf}\}\, \{B_{bfd}+\alpha+w, B_{bfd}\}}
{\{B_{afc}-z\}!\, \{B_{bdf}-\alpha-w\}!\, \{z\}!\, \{w\}!}
\\[-2pt]&\qquad\quad
\xi_n^{\frac{(B_{bdf}-\alpha)(B_{bfd}+\alpha+1)}{2}} \,
\xi_n^{-(n_1-c)\alpha+ (n_1-f)(B_{bdf}-\alpha)} \, 
\dfrac{ \{2d-\alpha, B_{dfb}\}}
{ \{2e, B_{ecd}+\alpha\}\{\alpha\}!}  
\end{aligned}
\end{equation*}
\begin{equation*}
\begin{aligned}
&
=
 (-1)^{n_1+B_{dec}} \, 
\xi_n^{-B_{dec}(B_{ced}+1)}  \, 
\dfrac{\{B_{edc}\}!^2 \,\{B_{abe}\}!^2\, \{2c, A_{edc}-n_1\} \, \{2e, A_{abe}-n_1\}}
 {\{B_{bdf}\}!  \,\{B_{afc}\}!\,  \{2c, A_{afc}-n_1\}\,  \{n_1\}! }\,
\\[-4pt]& 
\qquad
\sum_{z=\max(0, -B_{bdf}+B_{dec})}^{\min(B_{dec}, B_{afc})}
\!\!\!\!
(-1)^z \, \xi_n^{z(B_{ced}+1)} \,
\dfrac{\{B_{acf}+z, B_{acf}\}\, \{B_{bfd}+B_{dec}-z, B_{bfd}\}}
{\{B_{afc}-z\}!\, \{B_{bdf}-B_{dec}+z\}!\, \{z\}!}
\\[-2pt]&\qquad\qquad\qquad
\sum_{\alpha=0}^{B_{dec}-z}
\xi_n^{(z+2c-2n+2)\alpha} \,
\dfrac{\{2d-\alpha, B_{dfb}\}}
{\{2e, B_{ecd}+\alpha\}\,  \{\alpha\}! \, \{B_{dec}-z-\alpha\}!}\, .
\end{aligned}
\end{equation*}
Now using the equality 
$
\frac{\{2d-\alpha,B_{dfb}\}\{2d,B_{dce}+z\}}{\{2d,B_{dfb}\}}
=
\{2d-\alpha,B_{dce}+z\}
$ 
we get:
\begin{equation*}
\begin{aligned}
&
 (-1)^{n_1+B_{dec}} 
\xi_n^{-B_{dec}(B_{ced}+1)} \!
\dfrac{\{B_{edc}\}!^2 \{B_{abe}\}!^2 \{2c, A_{edc}\!\!-\!n_1\}\!  \{2e, A_{abe}\!\!-\!n_1\} \! \{2d, B_{dfb}\}}
 {\{B_{bdf}\}! \,\{B_{afc}\}!\,  \{2c, A_{afc}-n_1\} \, \{n_1\}!\, \{2e, B_{ecd}\}}\,
\\& 
\sum_{z=\max(0, -B_{bdf}+B_{dec})}^{\min(B_{dec}, B_{afc})}
\!\!\!\!\!\!\!\!
(-1)^z  \xi_n^{z(B_{ced}+1)} 
\dfrac{\{B_{acf}+z, B_{acf}\}\, \{B_{bfd}+B_{dec}-z, B_{bfd}\}}
{\{B_{afc}-z\}! \{B_{bdf}-B_{dec}+z\}! \{z\}!  
\{2d, B_{dce}+z\}}
\\&\qquad\qquad\qquad
\sum_{\alpha=0}^{B_{dec}-z}
\xi_n^{(z+2c-2n+2)\alpha} \,
\qbin{2d-\alpha}{B_{dce}+z}\, 
\qbin{B_{ecd}+\alpha}{B_{ecd}}. 
\end{aligned}
\end{equation*}
By using \eqref{eq:c}, the formula above is equal to 
\begin{equation*}
\begin{aligned}
&
 (-1)^{n-1+B_{dec}} \, 
\dfrac{\{B_{edc}\}!^2 \,\{B_{abe}\}!^2\, \{2c, A_{edc}-n_1\} \, \{2e, A_{abe}-n_1\}\, \{2d, B_{dfb}\}}
 {\{B_{bdf}\}! \,\{B_{afc}\}!\,  \{2c, A_{afc}-n_1\} \, \{n_1\}! \, \{2e, B_{ecd}\}}\,
\\& 
\sum_{z=\max(0, -B_{bdf}+B_{dec})}^{\min(B_{dec}, B_{afc})}
\!\!\!\!\!\!\!
\dfrac{(-1)^z \, \{B_{acf}+z, B_{acf}\}\, \{B_{bfd}+B_{dec}-z, B_{bfd}\}}
{\{B_{afc}-z\}!\{B_{bdf}-B_{dec}+z\}! \{z\}! 
\{2d, B_{dce}+z\}} 
\qbin{A_{edc}+1}{\!\!2c+z+1\!\!}
\end{aligned}
\end{equation*}
\begin{equation*}
\begin{aligned}
&
=
(-1)^{n-1+B_{afc}} \, 
\dfrac{\{B_{dec}\}! \, \{B_{abe}\}!}
{\{B_{bdf}\}! \, \{B_{afc}\}!} \, 
\qbin{2e}{A_{abe}-n_1} \, 
{\qbin{2e}{B_{ecd}}}^{-1} \,
\\&
\sum_{z=\max(0, -B_{bdf}+B_{dec})}^{\min(B_{dec}, B_{afc})}
(-1)^z \, 
\left[\begin{matrix}
A_{afc}+1 \\ 2c+z+1
\end{matrix}
\right] 
\left[\begin{matrix}
B_{acf}+z \\ B_{acf}
\end{matrix}
\right] 
\left[\begin{matrix}
B_{bfd}+B_{dec}-z \\ B_{bfd}
\end{matrix}
\right] 
\left[\begin{matrix}
B_{dce}+z \\ B_{dfb}
\end{matrix}
\right].  
\end{aligned}
\end{equation*}
In the last equality we used the identity 
$$
\frac{\{A_{edc}+1,2c+z+1\}\,\{2c,A_{edc}-n_1\}}{\{2c,A_{afc}-n_1\}}
=
\{A_{afc}+1,2c+z+1\}(-1)^{a+f-e-d}
$$ 
which is a direct consequence of the definition of the symbol $\{x,x-k\}$.
\qed

%
\end{document}